\definecolor{e-mail}{rgb}{0,.40,.80}
\definecolor{reference}{rgb}{.20,.60,.22}
\definecolor{citation}{rgb}{0,.40,.80}
\theoremstyle{plain}
\newtheorem*{maintheorem}{Theorem}
\newtheorem{theorem}{Theorem}[section]
\newtheorem*{theorem*}{Theorem}
\newtheorem{corollary}[theorem]{Corollary}
\newtheorem{proposition}[theorem]{Proposition}
\newtheorem{lemma}[theorem]{Lemma}
\newtheorem{conjecture}[theorem]{Conjecture}
\newtheorem*{conjecture*}{Conjecture}
\newaliascnt{assumption}{theorem}
\crefname{assumption}{assumption}{assumptions}
\theoremstyle{definition}
\newtheorem{definition}[theorem]{Definition}
\newtheorem*{definition*}{Definition}
\newtheorem{example}[theorem]{Example}
\newtheorem{remark}[theorem]{Remark}
\newtheorem*{notation*}{Notation and Convention}
\newcommand{\Filt}{\textup{Filt}}
\newcommand{\Grad}{\textup{Grad}}
\newcommand{\X}{\mathcal{X}}
\newcommand{\dX}{\mathfrak{X}}
\newcommand{\Y}{\mathcal{Y}}
\newcommand{\dY}{\mathfrak{Y}}
\newcommand{\M}{\mathcal{M}}
\newcommand{\F}{\mathcal{F}}
\renewcommand{\O}{\mathcal{O}}
\newcommand{\A}{\mathcal{A}}
\renewcommand{\S}{\mathbb{S}}
\renewcommand{\L}{\mathbb{L}}
\DeclareMathOperator{\Hom}{Hom}
\DeclareMathOperator{\Spec}{\mathrm{Spec}}
\title{Motivic DT/PT Correspondence}
\author{Chih-Huan Chang}
\date{\today}
\begin{document}
\address{Institute of Mathematics, Academia Sinica, Taipei, Taiwan}
\email{chchang0501@as.edu.tw}
\begin{abstract}
We prove a motivic version of the Donaldson--Thomas/Pandharipande--Thomas (DT/PT) correspondence on Calabi--Yau threefolds. The proof combines Toda's wall crossing framework and the motivic integral identity recently proved by Bu. This provides a refinement of the numerical DT/PT correspondence.
\end{abstract}

\maketitle

\tableofcontents

% !TeX root = ../main.tex

\section*{Introduction}
\subsection*{Curve counting theories on Calabi--Yau threefolds}
Enumerative geometry seeks to count geometric objects that satisfy specified 
conditions. In the setting of Calabi--Yau threefolds, various curve-counting 
theories have emerged, each offering different perspectives and techniques.  
Gromov--Witten (GW) theory provides one approach, using stable maps and 
intersection theory on their moduli spaces.  Although we will not use GW theory 
in this paper, it forms part of the broader landscape of curve-counting theories 
and is closely related to Donaldson--Thomas (DT) theory through the MNOP 
conjecture \cite{MNOP2006I, MNOP2006II}.  In this work, however, we focus 
exclusively on DT and Pandharipande--Thomas (PT) theories and their motivic 
refinements.  We now recall the DT and PT theories, which will be the main 
objects of study in this paper.

Let $X$ be a Calabi--Yau threefold, i.e. a smooth projective threefold
over $\mathbb{C}$ with trivial canonical bundle and $H^1(X,\mathcal{O}_X)=0$.
 For a non-negative integer $n$ and a 
homology class $\beta\in H_2(X,\mathbb{Z})$, consider $I_n(X,\beta)$, the moduli 
space of ideal sheaves $\mathcal{I}\subset\mathcal{O}_X$ with Chern character 
$\mathrm{ch}(\mathcal{I}) = (1,0,-\beta,-n)$.  This moduli space carries a 
zero-dimensional virtual fundamental class, and its Donaldson--Thomas invariant is 
defined by
\[
\mathrm{DT}_{n,\beta} := \int_{[I_n(X,\beta)]^{\mathrm{vir}}} 1 \in \mathbb{Z}.
\]
We introduce the generating functions
\[
Z_\beta^{\mathrm{DT}}(q) := \sum_{n\in\mathbb{Z}} \mathrm{DT}_{n,\beta} q^n,
\qquad
Z_\beta^{\mathrm{red}}(q) := \frac{Z_\beta^{\mathrm{DT}}(q)}{Z_0^{\mathrm{DT}}(q)}.
\]

Pandharipande--Thomas (PT) theory \cite{PT2009} provides an alternative curve-counting 
framework.  A stable pair consists of a pure one-dimensional sheaf 
$\mathcal{F}$ together with a section $s:\mathcal{O}_X\to\mathcal{F}$ such that 
$\mathrm{coker}(s)$ has zero-dimensional support.  The moduli space $P_n(X,\beta)$ of 
stable pairs with $[\mathcal{F}] = \beta$ and $\chi(\mathcal{F}) = n$ also carries a 
zero-dimensional virtual cycle, giving invariants
\[
P_{n,\beta} := \int_{[P_n(X,\beta)]^{\mathrm{vir}}} 1 \in \mathbb{Z},
\]
and a generating series
\[
Z_\beta^{\mathrm{PT}}(q) := \sum_{n\in\mathbb{Z}} P_{n,\beta} q^n.
\]

The DT/PT correspondence, first conjectured by Pandharipande and Thomas, was 
established independently by Bridgeland \cite{Bridgeland2011} and Toda 
\cite{Toda2010} using wall-crossing techniques in derived categories and Hall 
algebra methods:

\begin{theorem*}[DT/PT correspondence {\cite{Bridgeland2011, Toda2010}}]
\[
Z_\beta^{\mathrm{PT}}(q) = Z_\beta^{\mathrm{red}}(q).
\]
\end{theorem*}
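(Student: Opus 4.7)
The plan is to follow the wall-crossing strategy of Toda. I would work in the derived category $D^b(X)$ and construct a tilted heart $\mathcal{A}$ containing both $\mathcal{O}_X[1]$ and $\mathrm{Coh}_{\leq 1}(X)$, for example via torsion pair techniques on the standard heart. Inside $\mathcal{A}$, both shifted ideal sheaves $\mathcal{I}[1]$ and stable pairs $(s\colon \mathcal{O}_X \to \mathcal{F})$ — viewed as two-term complexes — appear as objects of numerical class $(-1,0,\beta,-n)$. The first check is that these are precisely the objects of this class that are stable for appropriate limiting stability conditions on $\mathcal{A}$.

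Next, I would introduce a one-parameter family $\sigma_t$ of weak (or polynomial) stability conditions on $\mathcal{A}$ — essentially controlled by the slope of $\mathcal{O}_X[1]$ relative to the zero-dimensional factor — and argue that for $t$ in one chamber the $\sigma_t$-semistable objects of the given numerical class are exactly the $\mathcal{I}[1]$, while for $t$ in the opposite chamber they are exactly the two-term complexes underlying stable pairs. Between the two chambers one must identify the walls and show there are only finitely many that are numerically relevant in any fixed degree; at each wall the strictly semistable objects fit in short exact sequences whose factors are either stable pairs (or ideal sheaves) and zero-dimensional sheaves, which is what allows the degree-$0$ DT contribution to cancel.

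The heart of the argument is the wall-crossing identity in the motivic (or numerical) Hall algebra, expressing the Hall-algebra element counting semistables on one side of each wall in terms of the other side together with stacks of zero-dimensional sheaves. Applying the integration map to each wall-crossing identity, multiplying together the local contributions, and resumming yields a telescoping product of the form $Z_\beta^{\mathrm{PT}}(q)\cdot Z_0^{\mathrm{DT}}(q) = Z_\beta^{\mathrm{DT}}(q)$, which is the desired identity.

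The main obstacle is precisely the step where Hall-algebra identities are transported to identities of generating series: the integration map must be a Lie-algebra (or ring) homomorphism on the relevant subalgebra. In the numerical setting this is the content of the Behrend-function identity for Calabi--Yau threefolds due to Joyce--Song, which must be invoked to handle the non-trivial behavior of the Behrend function under extensions; without it the individual wall contributions do not combine into a clean product formula. The remaining subtleties — choosing the heart so that $\mathcal{O}_X[1]$ is a simple object, verifying finiteness of walls, and extracting the zero-dimensional factor $Z_0^{\mathrm{DT}}(q)$ — are standard once this identity is in place.
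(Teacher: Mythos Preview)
This theorem is not proved in the paper; it is stated in the introduction as a known result of Bridgeland and Toda, serving as background and motivation for the paper's motivic refinement (Theorem~\ref{main}). So there is no ``paper's own proof'' to compare against directly.

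That said, your sketch is a faithful outline of Toda's argument, and the paper does follow the same architecture when proving the motivic version in Section~4, so a comparison is still meaningful. Two points of divergence are worth flagging. First, the heart: the paper (following \cite{Toda2010}) works with $\mathcal{A}_X = \langle \mathcal{O}_X, \mathrm{Coh}_{\leq 1}(X)[-1]\rangle_{\mathrm{ex}}$ inside the subcategory $\mathcal{D}_X$, so $\mathcal{O}_X$ rather than $\mathcal{O}_X[1]$ sits in the heart and the one-dimensional sheaves are shifted instead; this is only a convention shift from what you wrote. Second, and more substantively, Toda's setup has a \emph{single} wall (the locus $\arg z_0 = \arg z_1$) separating the DT and PT chambers, and the entire correspondence is encoded in one Hall-algebra identity $\delta_{\mathrm{DT}} * \delta_\infty = \delta_\infty * \delta_{\mathrm{PT}}$ (Lemma~\ref{DTPTwallcrossing} here), not a telescoping product over many walls. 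Your picture of ``finitely many walls'' and ``multiplying together local contributions'' is closer in spirit to other wall-crossing arguments; it is not wrong, but it is not how Toda or this paper organize the computation.

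Your identification of the integration-map compatibility (that $I$ respects the Hall product) as the decisive technical step is exactly on target; in the paper this is Theorem~\ref{homomorphism}, and upgrading it to the motivic level via Bu's integral identity is the main new content.
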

\subsection*{Categorification of Donaldson--Thomas Theory}

Given the success of numerical curve-counting theories, a natural question arises: can these invariants be enhanced to capture more refined geometric or topological information? One promising direction is to develop motivic or cohomological refinements, which aim to lift integer-valued invariants to classes in the Grothendieck ring of varieties or to mixed Hodge structures, such that their Euler characteristics recover the classical numerical invariants.

Among the known curve-counting theories, DT theory appears particularly well-suited for such enhancements. The works of Kontsevich--Soibelman \cite{KS2008,KontsevichSoibelman0910} envisioned the possibility of defining motivic DT invariants, but their realization required additional data such as orientation and $d$-critical structures. The existence of global orientation data for moduli stack of objects on a Calabi--Yau threefold was unclear at the time, limiting the scope of these theories. 

Nevertheless, important progress towards motivic refinements of the DT/PT correspondence has been made in various local situations. For instance, Morrison--Mozgovoy--Nagao--Szendr\H{o}i compute the motivic Donaldson--Thomas theory of the resolved conifold and obtain a motivic DT/PT correspondence in that setting, while Davison--Ricolfi establish a local motivic DT/PT correspondence for curves in smooth threefolds via virtual motives of Quot schemes \cite{MorrisonMozgovoyNagaoSzendroi2012, DavisonRicolfiLocal}. These results work with explicit local models (such as quivers with potential and critical loci of Quot schemes), where vanishing cycles and orientation data can be handled directly, precisely because the existence of global orientation data had not yet been established.

Recent breakthroughs, however, have clarified many of these technical issues. The theory of shifted symplectic structures developed by \cite{PTVV2013} provides a derived geometric framework for moduli stacks. Building on this, Joyce and collaborators—including Bussi, Brav, Schürg, Meinhardt, Szendrői, Upmeier, Ben-Bassat, and Dupont—developed a rigorous theory of \(d\)-critical structures and orientation data on moduli stacks~\cite{Joyce2007, JoyceDcritical, BenBassatBravBussiJoyce2015, BravBussiDupontJoyceSzendroi2015, BravBussiJoyce2019, JoyceUpmeier2021}.

Building on these foundational developments, we may define the motivic Donaldson--Thomas and Pandharipande--Thomas invariants as elements in the ring of monodromic motives on algebraic stacks, using motivic Behrend function constructions and motivic integration:
\begin{definition*}[{\ref{Definitionofdtinvariants}}]
    We define
    \begin{align*}
        \textup{DT}^{\textup{mot}}_{n,\beta}(X) &= \int_{\mathcal{M}_{\textup{DT}}^{n,\beta}}(\L^{\frac{1}{2}}-\L^{-\frac{1}{2}})\nu_{\mathcal{M}_{\textup{DT}}^{n,\beta}}^\textup{mot}\in\hat{\mathbb{M}}^{\textup{mon}}(\mathbb{C}), \\
        \textup{PT}^{\textup{mot}}_{n,\beta}(X) &= \int_{\mathcal{M}_{\textup{PT}}^{n,\beta}}(\L^{\frac{1}{2}}-\L^{-\frac{1}{2}})\nu_{\mathcal{M}_{\textup{PT}}^{n,\beta}}^\textup{mot}\in\hat{\mathbb{M}}^{\textup{mon}}(\mathbb{C}).
    \end{align*}
\end{definition*}

This paper aims to refine the DT/PT correspondence to the motivic level.
We follow Toda's approach in \cite{Toda2010, toda2020hall}, which realizes the DT/PT correspondence as a wall-crossing phenomenon for weak stability conditions on a suitable triangulated category.  In Toda's proof of the numerical DT/PT correspondence, one first establishes an identity in the Hall algebra of stack functions relating the DT and PT moduli stacks,
and then applies an integration map.  Since this integration map is
compatible with the Hall algebra product, the desired DT/PT identity for
numerical invariants follows.

In this paper we upgrade this integration map to a motivic
integration map and show that it is again compatible with the Hall algebra
product.  The key new input is Bu's recent motivic integral identity
\cite{Bu2024AMI}, together with the existence of good moduli spaces and
their local structure.  As a consequence, we obtain a motivic refinement
of the DT/PT correspondence.  The following is the main result of this
paper, establishing an identity between motivic DT and PT invariants:

\begin{maintheorem}[= Theorem \ref{main}]
    Let $X$ be a Calabi--Yau threefold over $\mathbb{C}$. Then
    \[
        \sum_{n,\beta}\textup{DT}^{\textup{mot}}_{n,\beta}(X)\,q^n x^\beta
        \;=\;
        \left(\sum_{n\geq0}\textup{DT}^{\textup{mot}}_{n,0}(X)\,q^n\right)
        \left(\sum_{n,\beta}\textup{PT}^{\textup{mot}}_{n,\beta}(X)\,q^n x^\beta\right).
    \]
\end{maintheorem}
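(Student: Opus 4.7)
The plan is to follow Toda's wall-crossing strategy \cite{Toda2010,toda2020hall} and lift every step to the motivic level. First I would work in Toda's heart $\A_X\subset D^b(\mathrm{Coh}(X))$, the extension closure of $\O_X[1]$ and coherent sheaves supported in dimension $\leq 1$, inside which the moduli stacks relevant to DT, PT and the punctual Hilbert schemes all embed as open substacks of the moduli stack $\M_{\A_X}$ of objects in $\A_X$. Toda's one-parameter family of weak stability conditions $\sigma_t$ on $\A_X$ has the DT and PT moduli as its semistable loci in the limits $t\to\pm\infty$, with the zero-dimensional Hilbert scheme contribution $\sum_{n\geq 0}\textup{DT}^{\textup{mot}}_{n,0}(X)\,q^n$ governing the single wall separating the two chambers.

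Working inside a suitable completion of the motivic Hall algebra $H^{\textup{mot}}(\A_X)$, I would then record Toda's wall-crossing identity
\[
    \delta^{\textup{DT}} \;=\; \delta^{\textup{Hilb}_0} \ast \delta^{\textup{PT}}
\]
between the characteristic generating classes of the corresponding moduli stacks, where $\ast$ denotes the Hall algebra product. This identity is purely categorical: it only uses the existence and uniqueness of Harder--Narasimhan filtrations for $\sigma_t$, and its proof is the same as in the numerical setting. The main theorem then follows formally by applying to both sides the motivic integration map
\[
    I:\; H^{\textup{mot}}(\A_X) \longrightarrow \hat{\mathbb{M}}^{\textup{mon}}(\mathbb{C})[[q,x]], \qquad
    [\M\to\M_{\A_X}] \longmapsto \int_{\M}(\L^{\frac{1}{2}}-\L^{-\frac{1}{2}})\,\nu_{\M}^{\textup{mot}},
\]
provided $I$ is a ring homomorphism, since $I(\delta^{\textup{DT}})$, $I(\delta^{\textup{Hilb}_0})$ and $I(\delta^{\textup{PT}})$ are by definition the three generating series appearing in the statement.

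The main obstacle is establishing this ring-homomorphism property of $I$. Multiplicativity with respect to the Hall product is equivalent to the following identity on the stack $\Filt$ of short exact sequences in $\A_X$, equipped with its natural maps to the stack $\Grad\cong\M_{\A_X}\times\M_{\A_X}$ of associated graded objects: the pushforward of the motivic Behrend class $\nu^{\textup{mot}}_{\Filt}$ should agree with the exterior product of the motivic Behrend classes on $\Grad$, twisted by $\L$ to the half Euler pairing. This is exactly the content of Bu's motivic integral identity \cite{Bu2024AMI}, applied to the $(-1)$-shifted symplectic derived enhancement of $\M_{\A_X}$ furnished by \cite{PTVV2013} together with the global orientation data of \cite{JoyceUpmeier2021}. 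To deploy Bu's identity rigorously I would use the existence of good moduli spaces and the étale-local structure of $\M_{\A_X}$ to reduce its hypotheses to a local check on charts; this is where the technical weight of the proof sits. Once multiplicativity of $I$ has been established, applying $I$ to Toda's Hall algebra identity yields the claimed factorization of motivic generating series and completes the proof.
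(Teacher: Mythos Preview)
Your outline captures the overall strategy, but there are two genuine gaps between what you sketch and what actually makes the argument go through.

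First, Toda's Hall algebra identity is not the one-sided factorization $\delta^{\mathrm{DT}}=\delta^{\mathrm{Hilb}_0}\ast\delta^{\mathrm{PT}}$ you write, but the conjugation
\[
\delta_{\mathrm{DT}}\ast\delta_\infty=\delta_\infty\ast\delta_{\mathrm{PT}},
\qquad\text{i.e.}\qquad
\delta_{\mathrm{DT}}=\delta_\infty\ast\delta_{\mathrm{PT}}\ast\delta_\infty^{-1}.
\]
Second, and more importantly, the integration map $I$ is \emph{not} a ring homomorphism to the ordinary product on $\hat{\mathbb{M}}^{\mathrm{mon}}(\mathbb{C})[[q,x]]$; Bu's integral identity (as you yourself note) produces a twist by $\L^{\chi(v,w)/2}$, so $I$ intertwines the Hall product with the \emph{twisted} product $x^v\ast x^w=\L^{\chi(v,w)/2}x^{v+w}$. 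Applying $I$ to the conjugation identity therefore gives an identity in the twisted algebra, not the theorem as stated. The missing step is to pass to $\epsilon_\infty:=\log\delta_\infty$, expand $\delta_{\mathrm{DT}}=\exp(\{\epsilon_\infty,-\})(\delta_{\mathrm{PT}})$ via iterated commutators, and then use the explicit computation $\chi((-n,0,0),v)=n$ for $v\in\Gamma^1$ to turn each twisted commutator $x^{(-n,0,0)}\ast x^v-x^v\ast x^{(-n,0,0)}=(\L^{n/2}-\L^{-n/2})\,x^{(-n,0,0)}\cdot x^v$ into an \emph{ordinary} product. Only after this conversion do you obtain a factorization with the untwisted product, and the $\beta$-independence of the degree-zero factor lets you identify it with $\sum_{n\ge0}\mathrm{DT}^{\mathrm{mot}}_{n,0}q^n$.

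A smaller but nontrivial technical point: Bu's motivic integral identity requires the ambient stack to be Nisnevich locally fundamental, and this is not known for $\O bj(\A_X)$ itself. The paper handles this by first proving that the moduli stack $\M^{ss}$ of objects semistable \emph{on the wall} admits a good moduli space (hence is Nisnevich locally fundamental by the local structure theorem), and then restricting the Hall module to the submodule $\mathcal{H}^{ss}$ generated by classes factoring through $\M^{ss}$. Your reduction ``to a local check on charts'' needs this input, and in particular needs the observation (Lemmas~\ref{subobjectosss}--\ref{directsumofwithpoint}) that subobjects, quotients and extensions of the relevant semistable objects remain semistable, so that the Hall algebra manipulations stay inside $\mathcal{H}^{ss}$.
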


A second main ingredient of the paper is the existence of good moduli
spaces for objects that are semistable with respect to the weak stability
condition on the DT/PT wall.

\begin{maintheorem}[= Theorem \ref{ssgoodmodulispace}]
    The moduli stack $\M^{ss}$ of semistable objects with respect to the
    DT/PT wall admits a good moduli space
    \[
        \pi:\M^{ss}\longrightarrow M^{ss}.
    \]
\end{maintheorem}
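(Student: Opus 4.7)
My plan is to invoke the existence criterion of Alper, Halpern-Leistner and Heinloth: a quasi-separated algebraic stack that is locally of finite type over $\mathbb{C}$ with affine stabilizers admits a good moduli space if and only if it is $\Theta$-reductive, S-complete, and its closed points form a bounded family. Thus the task reduces to verifying these three conditions for $\M^{ss}$.

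Semistable objects on the DT/PT wall live in Toda's tilted abelian heart $\A^p\subset D^b(X)$ generated by $\mathrm{Coh}_{\leq 1}(X)$ and $\O_X[-1]$, and $\M^{ss}$ is an open substack of the stack of objects in this heart with fixed numerical class, itself locally of finite type with affine stabilizers. Boundedness follows because such semistable objects numerically decompose into one-dimensional sheaves of fixed class together with zero-dimensional contributions of bounded Euler characteristic, each of which forms a bounded family. For $\Theta$-reductivity I would use the existence of Harder--Narasimhan filtrations with respect to the wall weak stability condition: any $\Theta$-filtration of a semistable object by semistable subobjects has associated graded that is again semistable and hence lifts to $\M^{ss}$. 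For S-completeness I would use Jordan--Hölder filtrations at the wall, whose stable factors are unique up to reordering; this yields a well-defined S-equivalence class, so that any two $R$-lifts agreeing on the generic fibre of the test stack $\Spec R[s,t]/(st-\pi)/\mathbb{G}_m$ must have isomorphic associated gradeds on the special fibre.

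The main obstacle is that the stability condition on the wall is only a \emph{weak} stability condition: its central charge vanishes on zero-dimensional sheaves, so the usual numerical inequalities do not directly control HN and JH filtrations for objects with nonzero zero-dimensional part. I would handle this by exploiting Toda's explicit description of semistable objects on the wall, decomposing them into a sheaf-theoretic part (governed by the moduli of one-dimensional Gieseker-semistable sheaves, which is known to admit a good moduli space) and a zero-dimensional part (governed by symmetric products of $X$). With this decomposition in place, the valuative criteria reduce to the corresponding well-established statements for these individual pieces, and the good moduli space $\pi:\M^{ss}\to M^{ss}$ then follows by gluing.
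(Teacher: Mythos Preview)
Your framework is right (invoke the Alper--Halpern-Leistner--Heinloth criterion), but the execution has a genuine gap, and the paper's route is substantially different from what you sketch.

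First, a factual correction: in Toda's weak stability condition $\sigma_\xi$, the central charge does \emph{not} vanish on zero-dimensional sheaves; a length-$n$ sheaf (shifted into $\A_X$) has $Z_\xi = -z_0\cdot(-n) \neq 0$. The ``weakness'' refers to the filtered target $\Gamma_\bullet$, not to allowing $Z=0$. So the obstacle you identify is not the real one.

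The real gap is your final paragraph. Semistable objects on the wall do not canonically decompose as a product of a one-dimensional Gieseker piece and a zero-dimensional piece: a wall-semistable $E$ of rank one is typically a nontrivial extension of $\O_X$ (or an ideal sheaf) by a zero-dimensional sheaf shifted, and these extensions are precisely what carry the interesting geometry. There is no gluing of good moduli spaces along such a ``decomposition,'' and the valuative criteria for $\Theta$-reductivity and S-completeness concern filtrations of $R$-flat families, not just HN/JH filtrations of a single object over a field. Invoking HN and JH filtrations alone does not settle either criterion.

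What the paper does instead is conceptually cleaner. It introduces an auxiliary tilting heart $\A = \langle \mathrm{Coh}_{\mu\le 0}(X), \mathrm{Coh}_{\mu>0}(X)[-1]\rangle$ of $D^b(X)$, larger than Toda's $\A_X$, which is known (via Piyaratne--Toda) to be noetherian, bounded, and to satisfy generic flatness. By results of Halpern-Leistner and AHLH, the ambient moduli stack $\M_\A$ is then automatically $\Theta$-reductive and S-complete with respect to essentially-finite-type DVRs. One then identifies the rank-one, $c_1=0$ part of $\M_\A$ with $\coprod_v \mathcal{O}bj^v(\A_X)$, so $\M^{ss,v}$ sits as a finite-type open substack of $\M_\A$. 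The problem thus reduces to showing that the unique filling $\Theta_R\to\M_\A$ (resp.\ $\overline{\mathrm{ST}}_R\to\M_\A$) of a diagram with boundary in $\M^{ss,v}$ actually lands in $\M^{ss,v}$; concretely, one must check that the associated graded of the relevant filtration stays semistable. This is done by three short lemmas showing that, for rank-one wall-semistable objects of slope $>1/2$, subobjects, quotients, and extensions remain semistable --- exactly controlling the pieces of the filtration that appear in the criteria. Finite type (your ``boundedness'') is quoted directly from Toda.
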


Moreover, the motivic DT/PT identity can be refined to the level of the
good moduli space of semistable objects: we prove an identity of motivic
functions on $M^{ss}$.

\begin{maintheorem}[= Theorem \ref{mian2}]
    There is an equality of motivic functions over the good moduli space
    $M^{ss}$:
    \[
        \sum_{n,\beta}\mathcal{DT}^{\mathrm{mot}}_{n,\beta}
        \;=\;
        \left(\sum_{n\geq0}\mathcal{DT}^{\mathrm{mot}}_{n,0}\right)
        \cdot
        \left(\sum_{n,\beta}\mathcal{PT}^{\mathrm{mot}}_{n,\beta}\right),
    \]
    where $\mathcal{DT}_{n,\beta}^\text{mot}$, $\mathcal{PT}_{n,\beta}^\text{mot}$ are some motives over the good moduli space that refine the motivic DT/PT invariants.
\end{maintheorem}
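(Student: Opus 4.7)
The plan is to carry out Toda's wall-crossing argument entirely relative to the good moduli space $\pi \colon \M^{ss} \to M^{ss}$ supplied by Theorem \ref{ssgoodmodulispace}, so that each Hall-algebra identity lifts from a numerical motivic identity (Theorem \ref{main}) to an equality of motivic functions on $M^{ss}$. Concretely, I would define
\[
\mathcal{DT}_{n,\beta}^{\textup{mot}} \;:=\; \pi_{\ast}\bigl((\L^{\frac{1}{2}}-\L^{-\frac{1}{2}})\,\nu^{\textup{mot}}_{\M_{\textup{DT}}^{n,\beta}}\bigr),
\qquad
\mathcal{PT}_{n,\beta}^{\textup{mot}} \;:=\; \pi_{\ast}\bigl((\L^{\frac{1}{2}}-\L^{-\frac{1}{2}})\,\nu^{\textup{mot}}_{\M_{\textup{PT}}^{n,\beta}}\bigr),
\]
viewing $\M_{\textup{DT}}^{n,\beta}$ and $\M_{\textup{PT}}^{n,\beta}$ as open substacks of $\M^{ss}$. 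Integrating over $M^{ss}$ recovers the invariants of Definition \ref{Definitionofdtinvariants}, so Theorem \ref{mian2} implies Theorem \ref{main} and is strictly finer.

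Next, I would introduce a \emph{relative} motivic Hall algebra over $M^{ss}$: its elements are motivic stack functions supported on $\M^{ss}$ and its product is the usual Hall-algebra convolution along the stack of short exact sequences, but with all structure morphisms taken over $M^{ss}$. Toda's DT/PT wall-crossing identity is a purely categorical statement about extensions between the constituents across the wall, and therefore lifts verbatim to this relative algebra. What remains is to show that the motivic integration map
\[
I(f) \;=\; \pi_{\ast}\bigl((\L^{\frac{1}{2}}-\L^{-\frac{1}{2}})\,\nu^{\textup{mot}}\cdot f\bigr)
\]
from the relative Hall algebra to motivic functions on $M^{ss}$ is a ring homomorphism; once this is proved, applying $I$ to Toda's identity and rearranging the two sides yields the claimed factorization.

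The essential input for the homomorphism property is Bu's motivic integral identity \cite{Bu2024AMI}. Applied to the $(-1)$-shifted symplectic stack of short exact sequences on the wall, with orientation data provided by \cite{JoyceUpmeier2021}, it gives exactly the compatibility of the motivic Behrend class with extensions. To globalize this compatibility over $M^{ss}$, I would exploit the étale local structure of the good moduli space --- closed points of $M^{ss}$ admit étale neighbourhoods of the form $[\Spec R / G_{x}] \to \Spec R^{G_{x}}$, where $G_{x}$ is the stabilizer --- and check the Hall-algebra compatibility on each such chart, where Bu's identity directly applies. The main obstacle is precisely this last step: the pointwise motivic identities must be assembled into a single identity of motivic functions on $M^{ss}$, which requires the gluing of orientation data and of the local motivic Behrend classes across the étale atlas in a manner compatible with the Hall-algebra convolution. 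This is the technical heart of the argument, and it is the precise reason why the good moduli space produced in Theorem \ref{ssgoodmodulispace} is indispensable as an intermediate object.
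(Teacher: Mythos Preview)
Your overall strategy---define the refined invariants as pushforwards along $\pi$, show a motivic integration map is multiplicative via Bu's identity, then apply Toda's Hall-algebra wall-crossing---matches the paper. However, you take a more roundabout path and misidentify where the difficulty lies.

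First, the paper does \emph{not} introduce a relative Hall algebra over $M^{ss}$. It keeps the same Hall module $\mathcal{H}^{ss}$ and the same Toda identity $\delta_{\mathrm{DT}}*\delta_\infty=\delta_\infty*\delta_{\mathrm{PT}}$ (Lemma~\ref{DTPTwallcrossing}); what changes is only the \emph{target} of the integration map. One defines $I_{\mathrm{GMS}}:\mathcal{H}^{ss}\to H^{ss}:=\bigoplus_v\hat{\mathbb{M}}^{\mathrm{mon}}(M^{ss,v})$ by $[\X\xrightarrow{f}\O bj^v(\A_X)]\mapsto \pi_!f_!f^*\nu^{\mathrm{mot}}$, and equips $H^{ss}$ with the twisted convolution $a*b=\L^{\chi(v,v_0)/2}\oplus_!(a\boxtimes b)$, where $\oplus:M^{ss}\times M^{ss}\to M^{ss}$ is the map descended from the direct-sum morphism $\Phi_2$. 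There is no need to redo the Hall product relative to $M^{ss}$.

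Second---and this is the main point---your ``main obstacle'' (gluing local motivic identities across an \'etale atlas of $M^{ss}$) is not an obstacle at all. The r\^ole of the good moduli space is exhausted by Corollary~\ref{maincoro}: it guarantees that $\M^{ss}$ is Nisnevich locally fundamental, which is precisely the hypothesis under which Bu's Theorem~\ref{mii} gives a \emph{global} identity $\mathrm{gr}_!\,\mathrm{ev}_1^*(\nu^{\mathrm{mot}}_{\M^{ss}})=\L^{d/2}\,\nu^{\mathrm{mot}}_{\Grad(\M^{ss})}$ on all of $\Grad(\M^{ss})$. No further gluing is required. Once this global identity is in hand, the proof that $I_{\mathrm{GMS}}$ is a module homomorphism is literally the computation of Theorem~\ref{homomorphism} with the final pushforward $\int_{\X\times\Y}$ to a point replaced by $\pi_!$ to $M^{ss}$; every step (base change, projection formula, Thom--Sebastiani, the virtual-dimension computation) is unchanged. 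The paper's proof of the corresponding proposition is one sentence.

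A minor correction: Bu's integral identity is applied to the $(-1)$-shifted symplectic stack $\M^{ss}$ via the Lagrangian correspondence $\Grad(\M^{ss})\leftarrow\Filt(\M^{ss})\to\M^{ss}$, not to ``the $(-1)$-shifted symplectic stack of short exact sequences''---the extension stack $\Filt$ is Lagrangian, not symplectic.
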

Pushing this identity forward along $M^{ss}\to\Spec\mathbb{C}$ recovers the
motivic DT/PT correspondence of Theorem~\ref{main}.

\subsection*{Acknowledgements}
I am grateful to my advisor Hsueh-Yung Lin for his guidance in algebraic geometry and continued support of my research. I am especially grateful to Tasuki Kinjo, who suggested the topic of this thesis, for his patient guidance on Donaldson–Thomas theory and for kindly hosting my visit at RIMS, and to Yukinobu Toda, whose foundational work on the DT/PT correspondence underpins this thesis, for generously hosting my stay at IPMU and for answering many questions about his results. I also thank Adeel Khan and Wu-Yen Chuang for many helpful discussions. Finally, I acknowledge the support of the National Center for Theoretical Sciences (NCTS) for funding my visits to RIMS and IPMU.
\subsection*{Notations and conventions}
    \begin{itemize}
    \item We work over the field of complex numbers $\mathbb{C}$ throughout the paper.
    \item By an algebraic variety, we mean a finite type separated $\mathbb{C}$-scheme.
    \item A Calabi--Yau threefold $X$ is a smooth projective threefold with trivial canonical bundle, satisfying $H^1(X,\O_X)=0$.
    \item Throughout this paper we work with schemes, algebraic spaces, and algebraic stacks that are quasi-separated and locally of finite type; in addition, every algebraic stack is assumed to have a separated diagonal.
    \item We denote by $\mathbb{S}$ the $\infty$-category of spaces; see \cite{LurieHTT}.
    \item We follow Toen's definition of derived algebraic stacks. A derived algebraic stack is a derived stack that is $n$-geometric for some $n$ and locally of finite presentation (see \cite{ToenVezzosi2008}); in our convention, its cotangent complex is perfect, and the rank of that complex is called the virtual dimension.
\end{itemize}

\section{Stability conditions and moduli spaces}
In this section, we recall some basic facts about DT/PT moduli spaces. We fix a smooth projective Calabi--Yau threefold $X$, and consider its bounded derived category of coherent sheaves $D^b(X):=D^b(\textup{Coh}(X))$. As \cite{PT2009} suggests, the DT/PT correspondence may be realized as a wall crossing in the derived category $D^b(X)$. However, the existence of a Bridgeland stability condition on the entire derived category \(D^{\mathrm{b}}(X)\) is not known for a general Calabi--Yau threefold. Therefore, following Toda's approach \cite{Toda2010}, we restrict to a triangulated subcategory \(\mathcal{D}_X \subset D^{\mathrm{b}}(X)\) and work with a weak stability condition on it. In the first section we review weak stability conditions in the sense of \cite{Toda2010}.

\subsection{Weak stability conditions on triangulated categories}
There are generally two approaches to constructing weak stability conditions: one via slicings and the other via hearts of $t$-structures. In our case, we adopt only the heart-based approach and thus do not define slicings in this paper. For details, we refer the reader to \cite{Toda2010}.
Given a triangulated category $\mathcal{D}$ and its Grothendieck group $K(\mathcal{D})$ together with a finitely generated $\mathbb{Z}$-module $\Gamma$ and a $\mathbb{Z}$-linear homomorphism,
\[
\textup{cl}:K(\mathcal{D})\to\Gamma
\]
and also a filtration on $\Gamma$,
\[
\Gamma_\bullet:0\subsetneq\Gamma_0\subsetneq\cdots\subsetneq\Gamma_N=\Gamma,
\]
such that each quotient 
\[
\mathbb{H}_i:=\Gamma_i/\Gamma_{i-1}
\]
is a free abelian group. For each $i$, we set $\mathbb{H}_i^{\vee}:=\Hom_\mathbb{Z}(\mathbb{H}_i,\mathbb{C})$ and fix norms $\|*\|_i$ on $\mathbb{H}_i\otimes_\mathbb{Z}\mathbb{R}$. Let $Z=\{Z_i\}_{i=0}^N\in\prod_{i=0}^N\mathbb{H}_i^{\vee}$ be a set of functions. We define a map $Z:\Gamma\to\mathbb{C}$ as follows. For $v\in \Gamma$, we take the unique integer $i\in\{0,\dots,N\}$ such that $v\in\Gamma_i$, but $v\not\in\Gamma_{i-1}$, define $Z(v)=Z_i([v])$ where $[v]$ denote the class of $v$ in $\mathbb{H}_i$, and denote $\|E\|:=\|[\textup{cl}(E)]\|_i$.

The function $Z$ can be seen as a function on $K(\mathcal{D})\to\mathbb{C}$ by composing $\textup{cl}:K(\mathcal{D})\to\Gamma$. We will write $Z(E)=Z(\textup{cl}(E))$ for short.
\begin{definition}[\cite{Toda2010}]\label{stabilityconditiondef}
    Let $\textup{Slice}(\mathcal{D})$ be the set of slicing on $\mathcal{D}$ that are locally finite. The space of weak stability conditions $\textup{Stab}_{\Gamma_\bullet}(\mathcal{D})$ is defined to be the collection of pairs $(Z,\mathcal{P})\in\prod_{i=0}^{N}\mathbb{H}_i^{\vee}\times\textup{Slice}(\mathcal{D})$, subject to the following conditions.
    \begin{enumerate}
        \item[(i)] For any non-zero $E\in\mathcal{P}(\phi)$, we have
        \[
        Z(E)\in\mathbb{R}_{>0}\exp(i\pi\phi).
        \]
        \item[(ii)] There is a constant $C>0$ such that for any non-zero $E\in\bigcup_{\phi\in\mathbb{R}}\mathcal{P}(\phi)$, we have
        \[
        \|E\|\leq C|Z(E)|.
        \]
    \end{enumerate}
\end{definition}
\subsection{Constructions of weak stability conditions}
\begin{definition}
    Fix a bounded t-structure on $\mathcal{D}$ and let $\A$ be its heart. A weak stability function on $\A$ is a function $Z\in\prod_{i=0}^N\mathbb{H}_i^\vee$ such that for any non-zero $E\in\A$, we have
    \[
    Z(E)\in\{r\exp(i\pi\phi):r>0,0<\phi\leq1\}.
    \]
    In particular, for any $0\neq E\in\A$ there is a well-defined slope, $\arg Z(E)\in(0,\pi]$.
\end{definition}

\begin{definition}
    Given a weak stability function $Z\in\prod_{i=0}^N\mathbb{H}_i^{\vee}$ on $\A$. An object $0\neq E\in\mathcal{A}$ is $Z$-semistable (resp. stable) if it satisfies the following condition.
    \begin{itemize}[label=\textbullet]
        \item For any exact sequence
    \[
\begin{tikzcd}
	0 & F & E & G & 0
	\arrow[from=1-1, to=1-2]
	\arrow[from=1-2, to=1-3]
	\arrow[from=1-3, to=1-4]
	\arrow[from=1-4, to=1-5]
\end{tikzcd}
\]
in $\A$, we have
\[
\arg Z(F)\leq\arg Z(G)\quad (\textup{resp.} \arg Z(F) <\arg Z(G) ).
\]
    \end{itemize}
    
\end{definition}

\begin{definition}[Harder--Narasimhan property]
    Given a weak stability function $Z\in\prod_{i=0}^N\mathbb{H}_i^{\vee}$ on $\A$. We say that $Z$ has Harder--Narasimhan property if for any object $E\in\A$ there is a $\mathbb{Z}$-filtration
    \[
    0=E_0\subset E_1\subset\cdots\subset E_n=E
    \]
    such that each subquotient $F_j=E_j/E_{j-1}$ is $Z$-semistable with
    \[
    \arg Z(F_1)> \arg Z(F_2)>\cdots>\arg Z(F_n).
    \]
\end{definition}
It is not easy to construct a slicing on a triangulated category. Nevertheless, it is often more natural to consider the heart of a t-structure. The following proposition shows that one can construct a stability condition using a heart and a stability function defined on it.
\begin{proposition}[\cite{Toda2010}]
    Giving a heart of a bounded t-structure on $\mathcal{D}$ and a weak stability function on it with the H-N property is the same as giving a pair $(Z,\mathcal{P})\in\prod_{i=0}^N\mathbb{H}^{\vee}_i\times\textup{Slice}(\mathcal{D})$ satisfying the condition (i) in \ref{stabilityconditiondef}.
\end{proposition}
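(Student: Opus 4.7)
The plan is to adapt Bridgeland's original equivalence between stability functions on hearts (with the HN property) and slicings on triangulated categories to the weak setting, where $Z$ is only defined on the filtered group $\Gamma$ rather than on $K(\mathcal{D})$ in a rank-preserving way. Since $Z$ is a function on $K(\mathcal{D})$ by composition with $\textup{cl}$, and condition (i) only concerns the argument $\arg Z$, the entire filtered-group structure plays no real role in the proof; it simply decorates $Z$ without changing the formal content of the equivalence.

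In the forward direction, starting from a heart $\A$ of a bounded $t$-structure and a weak stability function $Z$ on $\A$ with the HN property, I would define a slicing $\P$ by setting
\[
\P(\phi) := \{E\in\A\setminus\{0\}:E\text{ is }Z\text{-semistable with }\arg Z(E)=\pi\phi\}\cup\{0\}
\]
for $\phi\in(0,1]$, and extending to all $\phi\in\mathbb{R}$ by the shift rule $\P(\phi+1):=\P(\phi)[1]$. The Hom-vanishing $\Hom(\P(\phi_1),\P(\phi_2))=0$ for $\phi_1>\phi_2$ reduces, via the $t$-structure, to the semistability inequality inside $\A$. HN filtrations in the slicing sense are produced in two layers: first use the bounded $t$-structure to write any object as an iterated extension of shifted objects of $\A$, and then refine each piece using the HN filtration of the weak stability function on $\A$; regrouping gives the slicing HN filtration. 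Local finiteness follows because each individual refinement has only finitely many steps, and condition (i) holds by construction.

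In the backward direction, given $(Z,\P)$ satisfying (i), I would recover the heart as $\A:=\P((0,1])$, which is a bounded $t$-structure heart by a standard argument of Bridgeland using that $\P$ is a slicing on the full triangulated $\mathcal{D}$. Condition (i) directly forces $Z(E)\in\{r\exp(i\pi\phi):r>0,\,0<\phi\leq 1\}$ for every nonzero $E\in\A$, so $Z$ is a weak stability function on $\A$. The HN filtration of $\P$, truncated into the window $(0,1]$ via shifts from the $t$-structure, produces HN filtrations inside $\A$, so $Z$ has the HN property on $\A$. The two constructions are mutually inverse essentially by unwinding definitions: the $Z$-semistable objects in $\A=\P((0,1])$ of phase $\pi\phi$ are precisely $\P(\phi)$ for $\phi\in(0,1]$, and the shift rule extends this to all real phases.

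The only mildly delicate step is checking that the HN property in the heart sense indeed produces a genuine slicing-HN filtration on arbitrary objects of $\mathcal{D}$; the key input is boundedness of the $t$-structure, which guarantees that only finitely many cohomology sheaves of $\A$ appear. The weak stability setting introduces no new obstacle beyond Bridgeland's classical argument, because the filtration $\Gamma_\bullet$ and the piecewise definition of $Z$ only affect how $Z$ is computed as a complex number but not the resulting argument on nonzero classes, which is all that enters the equivalence.
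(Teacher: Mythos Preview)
The paper does not prove this proposition at all: it is stated with a citation to \cite{Toda2010} and no argument is given. So there is no ``paper's own proof'' to compare against; the author is simply importing the result as background.

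Your sketch is correct and is exactly the standard Bridgeland argument (as in \cite{Bridgeland2007} and its weak-stability adaptation in \cite{Toda2010}). You have correctly observed that the filtered structure on $\Gamma$ is irrelevant to the equivalence, since only $\arg Z$ enters condition~(i). One small remark: in the backward direction you should also note that the $Z$-semistable objects in $\mathcal{A}=\mathcal{P}((0,1])$ of phase $\phi$ coincide with $\mathcal{P}(\phi)$; this uses the Hom-vanishing axiom of the slicing and is what makes the two constructions genuinely inverse, not just one-sided. You allude to this in your final paragraph but it deserves to be stated as part of the backward direction rather than as an afterthought.
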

Denotes $\textup{Coh}_{\leq1}(X)$ the abelian subcategory of coherent sheaves with support dimension less than or equal to 1 on $X$. We consider the triangulated subcategory
\[
\mathcal{D}_X:=\langle\O_X,\textup{Coh}_{\leq1}(X)\rangle_{\textup{tr}}\subset D^b(\textup{Coh}(X)).
\]
This will be the main triangulated category we consider.

\subsection{Weak stability condition on $\mathcal{D}_X$}
We begin by fixing notation and definitions.  Denote by \(N_1(X)\) the abelian group of numerical curve classes on \(X\) and by \(A(X)\) its ample cone.  Set
\[
N_{\le 1}(X)=\mathbb{Z}\oplus N_1(X),
\qquad
\Gamma:= N_{\le 1}(X)\oplus\mathbb{Z}.
\]
Finally, let \(K(\mathcal{D}_X)\) be the Grothendieck group of the triangulated category \(\mathcal{D}_X\).

\begin{lemma}
    The homomorphism $\textup{cl}:K(\mathcal{D}_X)\to\Gamma$ given by 
    \[
    E\mapsto (\textup{ch}_3(E),\textup{ch}_2(E),\textup{ch}_0(E))
    \]
    is well-defined.
\end{lemma}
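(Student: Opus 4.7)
The plan is to factor the definition of $\textup{cl}$ through the Chern character homomorphism on $K(D^b(X))$ and then verify on generators that the image lies in the stated lattice $\Gamma=\mathbb{Z}\oplus N_1(X)\oplus\mathbb{Z}$. Since $\mathcal{D}_X$ is a triangulated subcategory of $D^b(X)$, the inclusion induces a group homomorphism $K(\mathcal{D}_X)\to K(D^b(X))$, and the Chern character is additive on distinguished triangles, yielding a homomorphism $\textup{ch}\colon K(\mathcal{D}_X)\to H^{\textup{even}}(X,\mathbb{Q})$. Well-definedness on $K(\mathcal{D}_X)$ is therefore automatic, and the real content is that the specified components $(\textup{ch}_3,\textup{ch}_2,\textup{ch}_0)$ take integral (respectively, curve-class) values, and that the omitted component $\textup{ch}_1$ vanishes identically.

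By additivity of $\textup{ch}$ on triangles, it suffices to check these constraints on a generating set of $K(\mathcal{D}_X)$ as an abelian group. Since $\mathcal{D}_X=\langle\mathcal{O}_X,\textup{Coh}_{\leq 1}(X)\rangle_{\textup{tr}}$, such a generating set is furnished by $[\mathcal{O}_X]$ together with classes $[F]$ for $F\in\textup{Coh}_{\leq 1}(X)$ (shifts only contribute signs, which preserve lattice membership). For $\mathcal{O}_X$, we have $\textup{ch}(\mathcal{O}_X)=(1,0,0,0)$, so all constraints are satisfied trivially. For $F\in\textup{Coh}_{\leq 1}(X)$, the support has codimension at least two in $X$, and choosing a finite locally free resolution one sees directly that $\textup{ch}_0(F)=\textup{ch}_1(F)=0$, while $\textup{ch}_3(F)=\chi(F)\in\mathbb{Z}$ is the holomorphic Euler characteristic.

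The one point that needs a small argument is that $\textup{ch}_2(F)\in N_1(X)$ when $F\in\textup{Coh}_{\leq 1}(X)$. By Grothendieck--Riemann--Roch applied to the inclusion of the support, $\textup{ch}_2(F)$ is represented in $\textup{CH}^2(X)_\mathbb{Q}$ by the fundamental cycle $\sum_C\textup{length}_{\eta_C}(F)\cdot[C]$ summed over the one-dimensional components of $\textup{supp}(F)$ with their generic lengths. This is an integral combination of curve classes, and pushing forward under the natural map $\textup{CH}^2(X)\to N^2(X)=N_1(X)$ lands in the asserted lattice. Combining with additivity, every $E\in\mathcal{D}_X$ satisfies $\textup{ch}_0(E)\in\mathbb{Z}$, $\textup{ch}_1(E)=0$, $\textup{ch}_2(E)\in N_1(X)$, $\textup{ch}_3(E)\in\mathbb{Z}$, and the assignment $E\mapsto(\textup{ch}_3(E),\textup{ch}_2(E),\textup{ch}_0(E))$ is a well-defined group homomorphism into $\Gamma$.

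The whole argument is routine, and the only mild obstacle is the identification of $\textup{ch}_2(F)$ with a numerical curve class; this is resolved by the cycle-theoretic interpretation of the Chern character for sheaves in codimension at least two, which is the sort of statement that appears in the standard treatment of Riemann--Roch for singular support.
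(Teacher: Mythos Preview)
Your proof is correct and follows the same overall strategy as the paper: reduce to the generators $\mathcal{O}_X$ and $F\in\textup{Coh}_{\leq 1}(X)$, then verify integrality of the Chern character components. The only noteworthy difference is in the treatment of $\textup{ch}_2(F)$: the paper observes that $c_1(F)=0$ for a sheaf with one-dimensional support and uses the identity $\textup{ch}_2(F)=\tfrac{1}{2}c_1(F)^2-c_2(F)=-c_2(F)$ to conclude integrality directly, whereas you invoke the cycle-theoretic description of $\textup{ch}_2(F)$ as the support cycle via Grothendieck--Riemann--Roch. Both arguments are standard; the paper's is slightly shorter, while yours gives a bit more geometric content by actually identifying the class. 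One small point: your assertion that $\textup{ch}_3(F)=\chi(F)$ silently uses $c_1(X)=0$ (so that $\textup{td}_1(X)=0$ and the $\textup{ch}_2\cdot\textup{td}_1$ term in Hirzebruch--Riemann--Roch vanishes); the paper makes this computation explicit.
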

\begin{proof}
    We need to show that $\textup{ch}_*$ has integer coefficients for $*=0,2,3$ on $\mathcal{D}_X$. By definition of $\mathcal{D}_X$, its grothendieck group is generated by $\O_X$ and $\textup{Coh}_{\leq1}(X)$, we only need to show that $\textup{ch}_3(E)$, $\textup{ch}_2(E)$ has integer coefficient for $E\in\textup{Coh}_{\leq1}(X)$. By HRR, we have
    \[
    \textup{ch}_3(E)=\int_X(\textup{ch}_2(E)+\textup{ch}_3(E))\cdot\left(1+\frac{\textup{c}_2(X)}{12}\right)=\chi(E)\in\mathbb{Z}
    \]
    On the other hand, since $E$ has support dimension $\leq1$, we know that $\textup{c}_1(E)=0$.
    Hence 
    \[
    \textup{ch}_2(E)=\frac{\textup{c}_1(E)^2-2\textup{c}_2(E)}{2}=-\textup{c}_2(E).
    \]
\end{proof}
We now recall the construction of weak stability condition on $\mathcal{D}_X$.
Consider the following filtration on $\Gamma$
\[
\Gamma_\bullet:0\subsetneq\mathbb{Z}\subsetneq N_{\leq1}\subsetneq\Gamma
\]
where each inclusion is the natural inclusion of direct summand.
Then we have $\mathbb{H}_0=\mathbb{Z}$, $\mathbb{H}_1=N_1(X)$ and $\mathbb{H}_2=\mathbb{Z}$. Consider
\[
\xi=(-z_0,-i\omega,z_1)
\]
where $z_0,z_1\in\{r\exp(i\pi\theta) \, |\,r>0,\theta\in(\pi/2,\pi)\}$ and $\omega\in A(X)$. 

For $(s,l,r)\in\mathbb{Z}\oplus N_1(X)\oplus\mathbb{Z}$ we define $Z_\xi=\{Z_{\xi,i}\}_{i=0}^2\in\mathbb{H}_0^\vee\times\mathbb{H}_1^\vee\times\mathbb{H}_2^\vee$ by
\[
Z_{\xi,0}(s)=-z_0\cdot s, Z_{\xi,1}(l)=-i\omega\cdot l, Z_{\xi,2}(r)=z_1\cdot r.
\]
Thus, we have a family of functions $Z_\xi$.
\begin{proposition}[{\cite[Lemma 3.5]{Toda2010}}]
        Let
\[
\mathcal{A}_X := 
\bigl\langle \mathcal{O}_X,\;\mathrm{Coh}_{\le 1}(X)[-1] \bigr\rangle_{\mathrm{ex}}
\subset \mathcal{D}_X
\]
be the smallest extension-closed subcategory of \(\mathcal{D}_X\) containing
\(\mathcal{O}_X\) and \(\mathrm{Coh}_{\le 1}(X)[-1]\).
Then \(\mathcal{A}_X\) is the heart of a bounded t-structure on \(\mathcal{D}_X\).

\end{proposition}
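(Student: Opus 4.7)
The strategy is to obtain $\mathcal{A}_X$ by Happel--Reiten--Smal\o{} tilting of a standard heart on $\mathcal{D}_X$ at a natural torsion pair, and then identify the tilt with the extension closure in the statement.

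First, I would verify that $\mathcal{B}_X := \mathcal{D}_X \cap \mathrm{Coh}(X)$ is the heart of a bounded t-structure on $\mathcal{D}_X$, inherited by restriction from the standard t-structure on $D^b(\mathrm{Coh}(X))$. The only nontrivial point is stability of $\mathcal{D}_X$ under the standard cohomology functors $H^i : D^b(\mathrm{Coh}(X)) \to \mathrm{Coh}(X)$. Since $\mathcal{D}_X$ is the triangulated envelope of $\mathcal{O}_X$ and $\mathrm{Coh}_{\le 1}(X)$, a d\'evissage along generating triangles together with the long exact sequences of cohomology sheaves reduces the question to closure of $\mathcal{B}_X$ under kernels, cokernels, and extensions in $\mathrm{Coh}(X)$. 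Closure under extensions and cokernels is formal; for subsheaves one uses that $c_1(E) = 0$ for every $E \in \mathcal{B}_X$ (both generators have trivial $c_1$ and $c_1$ is additive on triangles), combined with a rank argument.

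Next, I would define on $\mathcal{B}_X$ the torsion pair
\[
\mathcal{T} := \mathrm{Coh}_{\le 1}(X), \qquad
\mathcal{F} := \{E \in \mathcal{B}_X : \Hom_{\mathcal{B}_X}(T, E) = 0 \text{ for all } T \in \mathcal{T}\}.
\]
For $E \in \mathcal{B}_X$, its maximal subsheaf $T_E$ of support dimension $\le 1$ lies in $\mathcal{T}$, and $E/T_E$ lies in $\mathcal{F}$ by maximality of $T_E$. Applying the Happel--Reiten--Smal\o{} theorem, the subcategory
\[
\mathcal{A}_X' := \{ E \in \mathcal{D}_X : H^0(E) \in \mathcal{F},\; H^1(E) \in \mathcal{T},\; H^i(E) = 0 \text{ for } i \neq 0, 1\}
\]
is the heart of a bounded t-structure on $\mathcal{D}_X$. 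The inclusion $\mathcal{A}_X \subset \mathcal{A}_X'$ is immediate from the generators and extension-closure of $\mathcal{A}_X'$: $\mathcal{O}_X \in \mathcal{F}$, and $Q[-1]$ has $H^1 = Q \in \mathcal{T}$ for $Q \in \mathrm{Coh}_{\le 1}(X)$. The reverse inclusion, using the triangle $H^0(E) \to E \to H^1(E)[-1]$, reduces to showing $\mathcal{F} \subset \mathcal{A}_X$: given $F \in \mathcal{F}$ of rank $r := \mathrm{ch}_0(F)$, one needs a short exact sequence $0 \to F \to \mathcal{O}_X^{\oplus r} \to Q \to 0$ with $Q \in \mathrm{Coh}_{\le 1}(X)$, which identifies $F$ with an extension of $Q[-1]$ by $\mathcal{O}_X^{\oplus r}$ inside $\mathcal{A}_X$.

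The main obstacle is the construction of this embedding $F \hookrightarrow \mathcal{O}_X^{\oplus r}$ with $\le 1$-dimensional cokernel. The argument proceeds by induction on $r$, using that $F \in \mathcal{B}_X$ is torsion-free with $c_1(F) = 0$: these constraints, together with the structure of $\mathcal{D}_X$, supply enough global sections to produce a map from $F$ to a free sheaf, and the Chern-character bookkeeping forces the kernel to vanish and the cokernel to be supported in dimension $\le 1$. This is the only place requiring genuinely geometric input; the remainder is a formal application of Happel--Reiten--Smal\o{}.
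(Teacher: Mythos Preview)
The paper does not prove this proposition; it is quoted directly from \cite[Lemma~3.5]{Toda2010}. Your overall strategy---restrict the standard $t$-structure to $\mathcal{D}_X$, tilt at the torsion pair $(\mathrm{Coh}_{\le 1}(X),\mathcal{F})$ via Happel--Reiten--Smal\o{}, and identify the tilt with the given extension closure---is exactly the route Toda takes, and you have correctly located the one genuinely nontrivial step: showing that every torsion-free $F\in\mathcal{B}_X$ embeds into $\mathcal{O}_X^{\oplus r}$ with cokernel in $\mathrm{Coh}_{\le 1}(X)$.

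Two points in your sketch deserve tightening. First, your d\'evissage for ``$\mathcal{B}_X$ is a heart'' is too optimistic. You write that closure under cokernels is formal and that subsheaf-closure follows from $c_1=0$ and rank, but $\mathcal{B}_X$ is \emph{not} closed under arbitrary subsheaves in $\mathrm{Coh}(X)$ (e.g.\ $I_D\subset\mathcal{O}_X$ for a divisor $D$), and for a morphism $f:A\to B$ in $\mathcal{B}_X$ the relations $c_1(\ker f)=-c_1(\operatorname{im}f)=c_1(\operatorname{coker}f)$ do not by themselves force each term to vanish. What Toda actually does is prove, by induction on the number of triangles used to build $E\in\mathcal{D}_X$, a structural statement: each $H^i(E)$ has torsion in $\mathrm{Coh}_{\le 1}(X)$ and torsion-free part embedding in $\mathcal{O}_X^{\oplus r}$ with cokernel in $\mathrm{Coh}_{\le 1}(X)$. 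This simultaneously establishes that $\mathcal{B}_X$ is a heart and gives exactly the embedding you need later, so steps~1 and~4 are really the same lemma.

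Second, the phrase ``supply enough global sections to produce a map from $F$ to a free sheaf'' is at best ambiguous (global sections of $F$ give maps \emph{into} $F$). The clean argument is via the double dual: one shows $F^{**}\cong\mathcal{O}_X^{\oplus r}$, and then $F\hookrightarrow F^{**}$ has cokernel supported in codimension $\ge 2$. The key input, which you do not mention, is the Calabi--Yau hypothesis $H^1(X,\mathcal{O}_X)=0$: this gives $\mathrm{Ext}^1(\mathcal{O}_X,\mathcal{O}_X)=0$, so in the inductive build-up the copies of $\mathcal{O}_X$ cannot combine into a nontrivial reflexive sheaf. Without this vanishing the statement is false.
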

\begin{lemma}[{\cite[Lemma 2.17 + 3.8]{Toda2010}}]
    The association $\xi\mapsto (Z_\xi,\mathcal{A}_X)$ determines a continuous family in $\textup{Stab}_{\Gamma_\bullet}(\mathcal{D}_X)$. We denote this continuous family $\mathcal{V}_X$.
\end{lemma}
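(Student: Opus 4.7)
The plan is to verify, for each $\xi$, that the pair $(Z_\xi,\mathcal{A}_X)$ yields a point of $\textup{Stab}_{\Gamma_\bullet}(\mathcal{D}_X)$ and that the assignment $\xi\mapsto(Z_\xi,\mathcal{A}_X)$ is continuous. By the preceding proposition, producing the bounded heart $\mathcal{A}_X$ together with a weak stability function on it having the Harder--Narasimhan property already produces a pair satisfying condition (i) of Definition~\ref{stabilityconditiondef}. So the work decomposes into (a) checking that $Z_\xi$ is a weak stability function on $\mathcal{A}_X$, (b) verifying the H--N property, (c) verifying the support condition (ii), and (d) continuity in $\xi$.

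For (a), I would classify non-zero $E\in\mathcal{A}_X$ by the smallest piece of $\Gamma_\bullet$ in which $\textup{cl}(E)$ lies. Since $\mathcal{A}_X$ is generated by $\mathcal{O}_X$ and the shifts $F[-1]$ of objects of $\textup{Coh}_{\leq1}(X)$, any $E$ fits in a short exact sequence $0\to F[-1]\to E\to I\to 0$ with $F\in\textup{Coh}_{\leq1}(X)$ and $I$ built from copies of $\mathcal{O}_X$; in particular $\textup{ch}_0(E)\geq 0$, and when $\textup{ch}_0(E)=0$ the classes $\textup{ch}_2(E),\textup{ch}_3(E)$ are negatives of the corresponding invariants of a coherent sheaf. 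Ampleness of $\omega$ gives $\omega\cdot(-\textup{ch}_2(F))>0$ for $1$-dimensional $F$, and $\chi(F)>0$ for $0$-dimensional $F$, so in each of the three cases determined by the filtration level one checks directly that $Z_\xi(E)\in\{\rho e^{i\pi\phi}:\rho>0,\,0<\phi\leq1\}$; the hypotheses $\arg z_0,\arg z_1\in(\pi/2,\pi)$ are designed exactly so that this works.

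Step (b) is the serious content, and I would follow the standard Bridgeland--Toda strategy: show first that $\mathcal{A}_X$ is noetherian, then exploit the two-level structure of the filtration $\Gamma_\bullet$ to produce H--N filtrations in two stages, first separating objects by the $\textup{ch}_0$ component and afterwards refining the resulting subquotients in $\textup{Coh}_{\leq1}(X)[-1]$ using the H--N theory for the Gieseker-type function induced by $-i\omega$ and $-z_0$. For (c), the inequality $\|E\|\leq C|Z_\xi(E)|$ becomes a comparison between two norms on the finite-dimensional quotients $\mathbb{H}_i\otimes\mathbb{R}$: ampleness of $\omega$ and non-vanishing of $z_0,z_1$ give uniform constants on each stratum, and the convention in Definition~\ref{stabilityconditiondef} of assigning $Z$ to the top non-zero piece of $\textup{cl}(E)$ is precisely what allows this bound to hold despite $Z_\xi$ vanishing on lower pieces. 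Step (d) is immediate, since $\xi\mapsto Z_\xi$ is $\mathbb{C}$-linear and the heart is held fixed.

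I expect (b) to be the main technical obstacle: while $\mathcal{A}_X$ is a familiar tilt of $\textup{Coh}(X)$, the H--N property for a weak (rather than genuine Bridgeland) stability function requires controlling infinite filtrations in the tilted heart across different filtration levels of $\Gamma_\bullet$, and it is essentially this part that is carried out in \cite[Lemma~3.8]{Toda2010}.
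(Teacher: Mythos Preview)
The paper does not supply its own proof of this lemma; it is stated with a bare citation to \cite[Lemma~2.17 and~3.8]{Toda2010} and no argument is given. Your outline is a faithful reconstruction of the strategy in that reference: the decomposition into (a) weak stability function, (b) Harder--Narasimhan property, (c) support condition, and (d) continuity is exactly how Toda organizes the argument, and you correctly identify (b) as the technical core handled in his Lemma~3.8.

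One place where your sketch is too quick is step~(c) on the stratum $\mathbb{H}_1=N_1(X)$. The claim that ``ampleness of $\omega$ \ldots\ give[s] uniform constants on each stratum'' amounts to the inequality $\|\beta\|\leq C\,(\omega\cdot\beta)$ for all nonzero effective curve classes $\beta$, and this is not a mere norm comparison on a finite-dimensional space: the linear functional $\omega\cdot(-)$ is not bounded below on all of $N_1(X)_{\mathbb{R}}$. What one actually uses is Kleiman's criterion, so that $\omega$ is strictly positive on $\overline{NE}(X)\setminus\{0\}$, together with compactness of the projectivized Mori cone to extract a uniform constant. This is standard, but it is the real content of the support property in this setting and should be named. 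Apart from this, and a harmless sign slip in the phrase ``$\omega\cdot(-\textup{ch}_2(F))>0$'' (you want $\omega\cdot\textup{ch}_2(F)>0$ for $F$ one-dimensional), the plan is sound.
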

\subsection{Moduli stacks of semistable objects}
We now collect some foundational properties of the moduli stack of objects in $\mathcal{A}_X$. As shown in \cite{Max2006}, there exists an algebraic stack $\mathcal{M}$, which parametrizes objects $E \in D^b(X)$ satisfying
\[
\mathrm{Ext}^i(E, E) = 0 \quad \textup{for all } i < 0.
\]
Consider the determinant map 
\[
\det:\M\to\textup{Pic}(X), E\mapsto\det(E).
\]
The objects in $\mathcal{D}_X$ are contained in the fiber at $[0]=\O_X\in\textup{Pic}(X)$. We denote $\M_0$ the fiber of $\det$ at $[0]$. Then the moduli stack of objects $\O bj(\mathcal{A}_X)$, as an abstract stack, is a substack of $\M_0$. The stack $\O bj(\A_X)$ admits a graded decomposition 
\[
\coprod_{v\in\Gamma}\O bj^v(\mathcal{A}_X)=\O bj(\mathcal{A}_X)
\]
where $\O bj^v(\mathcal{A}_X)$ is the moduli stack of objects $E\in\mathcal{A}_X$ with $\textup{cl}(E)=v\in\Gamma$. For a weak stability condition of the form $\sigma_\xi=(Z_\xi,\mathcal{A}_X)$ and a numerical type $v$ we may consider its moduli stack of semistable objects $\M^v(\sigma_\xi)\subset\O bj^v(\mathcal{A}_X)$.
We would like to focus on objects of rank one or has dimension zero. Define
\begin{align*}
    &\Gamma^1=\{v\in\Gamma\,|\, v=(-n,-\beta,1),\beta:\textup{effective}\}\\
    &\Gamma^0=\{v\in\Gamma\,|\, v=(-n,0,0),n\geq0\}
\end{align*}

\begin{theorem}[\cite{Toda2010}]
    Let $v\in\Gamma^0\cup\Gamma^1$ be a numerical type.
    \begin{enumerate}
        \item the moduli stack of objects
        \[
        \O bj^v(\mathcal{A}_X)\subset\M_0
        \]
        is open in $\M_0$. In particular, $\O bj^v(\A_X)$ is an algebraic stack locally of finite type over $\mathbb{C}$.
        \item For any weak stability condition of the form $\sigma_\xi$, the substack of $\sigma_\xi$-semistable objects
        \[
        \M^v(\sigma_\xi)\subset\O bj^v(\mathcal{A}_X)
        \]
        is open in $\O bj^v(\A_X)$ of finite type over $\mathbb{C}$.
    \end{enumerate}
\end{theorem}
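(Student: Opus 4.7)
The plan is to reduce both parts to open (respectively, bounded) conditions on the pair of cohomology sheaves $(\mathcal{H}^0(E),\mathcal{H}^1(E))$ with respect to the standard $t$-structure on $D^b(X)$, using the description of $\mathcal{A}_X$ as the extension-closed subcategory generated by $\mathcal{O}_X$ and $\textup{Coh}_{\leq 1}(X)[-1]$.

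For part (1), the first step is to show that an object $E\in\M_0$ with $\textup{cl}(E)=v\in\Gamma^0\cup\Gamma^1$ lies in $\mathcal{A}_X$ if and only if $\mathcal{H}^i(E)=0$ for $i\notin\{0,1\}$, $\mathcal{H}^1(E)\in\textup{Coh}_{\leq 1}(X)$, and $\mathcal{H}^0(E)$ is of the appropriate type (zero when $v\in\Gamma^0$, and the ideal sheaf of a subscheme of dimension $\leq 1$ when $v\in\Gamma^1$). One checks this characterization by induction on the length of an extension presentation in $\mathcal{A}_X$, using the long exact cohomology sequence attached to each short exact sequence in $\mathcal{A}_X$. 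Each of the resulting conditions is open on the universal family over $\M_0$: the vanishing of cohomology of a perfect complex outside a specified range is open by standard semicontinuity; the condition $\mathcal{H}^1(E)\in\textup{Coh}_{\leq 1}(X)$ is open because dimension of support is upper semicontinuous; and being rank-one torsion-free with trivial determinant (hence an ideal sheaf) is a standard open condition. Their intersection is therefore open in $\M_0$, and algebraicity and local finite-typeness of $\O bj^v(\mathcal{A}_X)$ follow from the corresponding properties of $\M_0$.

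For part (2) I would separate openness of semistability from finite-typeness. Openness follows from the Harder--Narasimhan property: for each potentially destabilizing numerical class $v'$ with $\arg Z_\xi(v')>\arg Z_\xi(v)$, the locus of $E$ admitting a sub $F\subset E$ in $\mathcal{A}_X$ with $\textup{cl}(F)=v'$ is the image of the stack of short exact sequences $0\to F\to E\to G\to 0$ in $\mathcal{A}_X$ of the prescribed classes, and is closed in $\O bj^v(\mathcal{A}_X)$. The positivity of $\xi$ and the discreteness of the filtration $\Gamma_\bullet$ leave only finitely many relevant $v'$, so the union of destabilizing loci is closed and semistability is open. It then suffices to prove that the semistable family is bounded.

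The main obstacle is boundedness in the $\Gamma^1$ case. For $v=(-n,0,0)\in\Gamma^0$, the objects are shifts $F[-1]$ of $0$-dimensional sheaves of length $n$, which form a bounded family parametrized by the Quot scheme of $0$-dimensional quotients of $\mathcal{O}_X$ of length $n$. For $v=(-n,-\beta,1)\in\Gamma^1$, my plan is to bound $\mathcal{H}^0(E)$ and $\mathcal{H}^1(E)$ separately and then bound their extensions. The $\mathcal{H}^0(E)$ piece is an ideal sheaf of a $1$-dimensional subscheme whose numerical class differs from $v$ only by the contribution of $\mathcal{H}^1(E)$; once the latter is controlled, it belongs to a bounded family by classical boundedness of Hilbert schemes of curves in a fixed numerical class. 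The $\mathcal{H}^1(E)\in\textup{Coh}_{\leq 1}(X)$ piece has Hilbert polynomial constrained both by the effectiveness of $\beta$ and by the semistability bound $\|E\|\leq C|Z_\xi(E)|$ from Definition~\ref{stabilityconditiondef}(ii), pinning it down to a bounded family. The space of extensions of these two bounded families is itself bounded, which gives finite-typeness and completes the proof.
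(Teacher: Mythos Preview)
The paper does not supply its own proof of this statement; it is quoted directly from \cite{Toda2010} with no argument, so there is nothing in the paper to compare against. Your outline is in the spirit of Toda's original argument---characterize objects of $\mathcal{A}_X$ of rank $\le 1$ by conditions on their standard cohomology sheaves, then check openness and boundedness condition by condition---but several steps do not go through as written.

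In part~(2), the openness argument has a real gap: the claim that the locus of $E$ admitting an $\mathcal{A}_X$-subobject of a fixed class $v'$ is closed requires a properness input (a valuative criterion for subobjects in $\mathcal{A}_X$, or a Quot-type construction adapted to $\mathcal{A}_X$), which you have not supplied and which is not automatic since $\mathcal{A}_X$ is not $\mathrm{Coh}(X)$. Likewise the finiteness of destabilizing classes $v'$ does not follow from the support property~(ii) of Definition~\ref{stabilityconditiondef} alone: that inequality constrains \emph{semistable} objects, not arbitrary subobjects, and for a weak stability condition $Z_\xi$ can vanish on nonzero classes in lower strata of $\Gamma_\bullet$. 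For boundedness, the $\Gamma^0$ case is slightly misstated (not every zero-dimensional sheaf of length $n$ is a quotient of $\mathcal{O}_X$; use a Quot scheme of $\mathcal{O}_X^{\oplus n}$, or simply that length-$n$ sheaves on a projective variety form a bounded family). In the $\Gamma^1$ case your appeal to $\|E\|\le C|Z_\xi(E)|$ to control $\mathcal{H}^1(E)$ is circular: that bound concerns the class of $E$, which is already fixed equal to $v$, and says nothing about the class of $\mathcal{H}^1(E)$. Bounding $\mathcal{H}^1(E)$ genuinely requires using the $\sigma_\xi$-semistability of $E$ together with the specific form of $Z_\xi$ and the torsion-pair structure of $\mathcal{A}_X$, as in Toda's proof.
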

\begin{theorem}[{\cite[Proposition 3.12]{Toda2010}}]\label{M=I/G}
    For a stability condition of the form $\sigma_\xi$ where $\xi=(-z_0,-i\omega,z_1)$. For $v=(-n,-\beta,1)\in\Gamma^1$ we have the following
    \begin{enumerate}
        \item Suppose that $\arg z_0<\arg z_1$. We have
        \[
        \M^v(\sigma_\xi)=[I_n(X,\beta)/\mathbb{G}_m]
        \]
        where $I_n(X,\beta)$ is the moduli space of ideal sheaves $I_C\subset\O_X$ with numerical type $(-n,-\beta,1)$.
        \item Suppose that $\arg z_0>\arg z_1$. We have
        \[
        \M^v(\sigma_\xi)=[P_n(X,\beta)/\mathbb{G}_m]
        \]
        where $P_n(X,\beta)$ is the moduli space of two term complexes $(\O_X\overset{s}{\to}\F)$ for stable pairs with numerical type $(-n,-\beta,1)$.
    \end{enumerate}
    In both situations, the $\mathbb{G}_m$-action is trivial.
\end{theorem}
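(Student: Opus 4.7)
The plan is to classify objects of class $v=(-n,-\beta,1)$ in the heart $\A_X$, decide $\sigma_\xi$-semistability on each side of the wall $\arg z_0=\arg z_1$, and then identify the resulting stack with the classical DT or PT moduli together with its scalar $\mathbb{G}_m$-automorphism.

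\textbf{Step 1 (normal form).} For any $E\in\A_X$ with $\textup{cl}(E)=v$, taking ordinary cohomology yields a canonical $\A_X$-short exact sequence
\[
0\to I\to E\to T[-1]\to 0,
\]
with $I=H^0(E)$ a rank-$1$ torsion-free sheaf and $T=H^1(E)\in\textup{Coh}_{\leq 1}(X)$; the defining description $\A_X=\langle\O_X,\textup{Coh}_{\leq 1}(X)[-1]\rangle_\textup{ex}$ ensures there is no cohomology in other degrees, and the rank-$1$ condition pins down $H^0(E)$. Since $E\in\M_0$, the relation $\det E=\O_X$ forces $I=I_Z$ to be the ideal sheaf of a subscheme $Z\subset X$ of dimension $\leq 1$. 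A direct computation gives $Z_\xi(E)=z_1$, so $\arg Z_\xi(E)=\arg z_1$; the only phases that appear for subobjects and quotients of $E$ in $\A_X$ are $\arg z_1$ (rank-$1$ pieces), $\pi/2$ (shifts $F[-1]$ of pure $1$-dim sheaves), and $\arg z_0$ (shifts $k(p)[-1]$ of $0$-dim sheaves). As $\pi/2<\arg z_1$, the only potential destabilizers are the $0$-dimensional shifts $k(p)[-1]$.

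\textbf{Step 2 (the two sides of the wall).} On the DT side $\arg z_0<\arg z_1$: if $T\neq 0$, the canonical quotient $E\twoheadrightarrow T[-1]$ admits a further quotient $k(p)[-1]$ at each $p\in\textup{supp}(T)$, of phase $\arg z_0<\arg z_1=\arg Z_\xi(E)$, which destabilizes $E$; hence $T=0$ and $E=I_Z$ is an ideal sheaf. Conversely every ideal sheaf is $\sigma_\xi$-semistable, since its rank-$1$ subobjects $I'\subset I_Z$ all have phase $\arg z_1$ and its embedded-point subobjects $k(p)[-1]$ have phase $\arg z_0<\arg z_1$. On the PT side $\arg z_0>\arg z_1$ the inequalities reverse: the embedded-point exact sequence $0\to k(p)[-1]\to I_Z\to I_{Z\setminus\{p\}}\to 0$ now destabilizes $I_Z$ whenever $Z$ carries an embedded point, forcing $Z=C$ to be a pure $1$-dimensional curve; allowing $T\neq 0$, semistability becomes the vanishing $\Hom_{\A_X}(k(p)[-1],E)=0$ for every $p$, which after repackaging $E$ as a two-term complex $[\O_X\overset{s}{\to}F]$ via its cohomology translates exactly into the classical PT condition that $F$ be pure $1$-dimensional with $0$-dimensional $\textup{coker}(s)$.

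\textbf{Step 3 (stack identification).} In both cases every semistable $E$ is simple with $\textup{Aut}(E)=\mathbb{G}_m$ acting by scalars, so $\M^v(\sigma_\xi)$ is a $\mathbb{G}_m$-gerbe over its coarse space, which is $I_n(X,\beta)$ on the DT side and $P_n(X,\beta)$ on the PT side. The tautological universal ideal sheaf (respectively the universal stable pair complex) on $I_n(X,\beta)\times X$ (resp.\ $P_n(X,\beta)\times X$) gives a universal family, trivializing this $\mathbb{G}_m$-gerbe with trivial action and yielding the stated presentations $[I_n(X,\beta)/\mathbb{G}_m]$ and $[P_n(X,\beta)/\mathbb{G}_m]$. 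The main obstacle is Step 2 on the PT side: turning the cohomological condition $\Hom_{\A_X}(k(p)[-1],E)=0$ into the classical stable-pair definition requires a careful unpacking of the two-term-complex structure of $E$, and one must verify this gives an equivalence of moduli stacks (not merely a bijection on $\mathbb{C}$-points) so that the scheme-theoretic structure of $I_n(X,\beta)$ and $P_n(X,\beta)$ is correctly recovered.
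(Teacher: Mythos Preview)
The paper does not prove this statement itself; it is quoted verbatim from \cite[Proposition~3.12]{Toda2010}, so there is no ``paper's proof'' to compare against beyond Toda's original argument, which your outline essentially follows.

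That said, there are two genuine errors in your Step~2. First, in the tilted heart $\A_X$ a sheaf inclusion $I'=I_{Z'}\subset I_Z$ is \emph{not} a monomorphism: the cone $I_Z/I'$ lies in $\textup{Coh}_{\leq 1}(X)\subset\A_X[1]$, so in $\A_X$ the map $I'\to I_Z$ is a \emph{surjection} with kernel $(I_Z/I')[-1]$. Consequently $I_Z$ has no proper rank-$1$ subobject; its proper subobjects are precisely the $G[-1]$ with $G=I_{Z''}/I_Z$ for $Z''\subsetneq Z$. Your DT-side conclusion survives (these subobjects have phase $\pi/2$ or $\arg z_0$, both $\leq\arg z_1$), but the reasoning you give is incorrect.

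Second, on the PT side your criterion $\Hom_{\A_X}(k(p)[-1],E)=0$ is only half of semistability. One must also rule out rank-$0$ \emph{quotients} of phase $\pi/2$; since every rank-$0$ quotient of $E$ factors through $T[-1]$, this is exactly the requirement that $T$ be $0$-dimensional. The two conditions are independent: if $C$ is a pure curve and $T$ is pure $1$-dimensional, then $\Hom(k(p),\O_C)=0$ and $\Hom(k(p),T)=0$ force $\Hom(k(p)[-1],E)=0$, yet the canonical quotient $E\twoheadrightarrow T[-1]$ has phase $\pi/2<\arg z_1$ and destabilizes $E$. Once you add this second condition, $E$ has $H^0(E)=I_C$ with $C$ pure and $H^1(E)$ zero-dimensional, and the identification with stable pairs is exactly \cite[Proposition~1.21]{PT2009}; this is where the ``careful unpacking'' you flag is actually carried out in the literature.
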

We denote the above two moduli stacks as $\M^{n,\beta}_{\textup{DT}}$ and $\M^{n,\beta}_{\textup{PT}}$ when $v=(-n,-\beta,1)$. For a stability condition $\sigma_\xi$ on the wall (i.e. $\arg(z_0)=\arg(z_1)$), we denote $\M^{ss,v}=\M^v(\sigma_\xi)$ for $v\in\Gamma^1\cup\Gamma^0$ and 
\[
\M^{ss}=\coprod_{v\in\Gamma^1\cup\Gamma^0}\M^{ss,v}.
\]
\subsection{The Good moduli space and existence theorem}
In \cite{alper2013good}, Alper introduced the notion of a good moduli space as a fundamental concept in intrinsic GIT. Subsequently, the work of Alper--Halpern-Leistner--Heinloth \cite{AHLH18} formulated sufficient criteria for good moduli spaces to exist, while Alper--Hall--Rydh \cite{Alper2019TheL} established a structural theorem. For our purposes, the structural theorem guarantees the existence of a Nisnevich local structure, which allows motivic properties to be reduced to local situations. This local structure is an essential ingredient for the motivic integral identity \cite{Bu2024AMI}. 
\begin{definition}
    We say that an algebraic stack $\X$ admits a good moduli space if there is a morphism $\phi:\X\to Y$, where $Y$ is an algebraic space satisfied the following conditions
    \begin{enumerate}
        \item The pushforward $\phi_*:\textup{QCoh}(\X)\to\textup{QCoh}(Y)$ is an exact functor.
        \item The natural map $\O_Y\to\phi_*\O_{\X}$ is an isomorphism.
    \end{enumerate}
\end{definition}
We now introduce two important notions that will be used to state the existence criterion.
Let $\Theta:=[\mathbb{A}^1/\mathbb{G}_m]$. For any scheme $S$ we denote $\Theta_S$ the base change $\Theta\times S$. For a DVR $R$ with fraction field $K$ and a uniformizer $\pi$, let $0$ be the unique closed point of $\Theta_R$.
\begin{definition}
    An algebraic stacks $\X$ is said to be $\Theta$-reductive if for every DVR $R$, any commutative diagram of solid arrows
    \[\begin{tikzcd}
	\Theta_R\setminus\{0\} & \X \\
	\Theta_R
	\arrow[from=1-1, to=1-2]
	\arrow[from=1-1, to=2-1]
	\arrow[dashed, from=2-1, to=1-2]
\end{tikzcd}\]
can be completed in a unique way.
\end{definition}
\begin{remark}\label{thetareductive}
We cover \(\Theta\setminus\{0\}\) by the two open subsets
\[
\Theta_K
\quad\textup{and}\quad
\bigl[(\mathbb{A}^1\setminus\{0\})/\mathbb{G}_m\bigr]\times\textup{Spec} R
\;\cong\;\textup{Spec} R.
\]
Accordingly, specifying a morphism
\(\Theta_R\setminus\{0\}\to\mathcal{X}\)
is equivalent to giving morphisms
\(\Theta_K\to\mathcal{X}\) and \(\textup{Spec} R\to\mathcal{X}\) together with an identification of their restrictions over \(\textup{Spec} K\).

\end{remark}
Set
\[
\overline{\mathrm{ST}}_R
\;:=\;
\bigl[\textup{Spec}\bigl(R[s,t]/(st-\pi)\bigr)\big/\mathbb{G}_m\bigr],
\]
where \(s\) and \(t\) are given \(\mathbb{G}_m\)-weights \(1\) and \(-1\), respectively.  
The equations \(s = t = 0\) define a closed point, which we denote by \(0\).
\begin{definition}
An algebraic stack \(\mathcal{X}\) is called \textbf{S}-complete if, for every DVR \(R\) and every commutative diagram of solid arrows
\[
\begin{tikzcd}
\overline{\mathrm{ST}}_{R}\setminus\{0\} \arrow[r] \arrow[d] &
\mathcal{X} \\[6pt]
\overline{\mathrm{ST}}_{R} \arrow[ru, dashed] &
\end{tikzcd}
\]
there exists a unique dotted arrow completing the diagram.
\end{definition}
\begin{remark}\label{Scomplete}
    The open substack \(\overline{\mathrm{ST}}_{R}\setminus\{0\}\) is covered by the loci
\(\{s\neq0\}\) and \(\{t\neq0\}\).  Observe that
\[
\Spec\bigl(R[s,t]_{s}/(st-\pi)\bigr)\big/\mathbb{G}_m
\;\cong\;
\Spec\bigl(R[s,t]_{s}/(t-\tfrac{\pi}{s})\bigr)\big/\mathbb{G}_m
\;\cong\;
\Spec\bigl(R[s]_{s}\bigr)\big/\mathbb{G}_m
\;\cong\;
\Spec R,
\]
and similarly
\[
\Spec\bigl(R[s,t]_{t}/(st-\pi)\bigr)\big/\mathbb{G}_m
\;\cong\;
\Spec\bigl(R[t]_{t}\bigr)\big/\mathbb{G}_m
\;\cong\;
\Spec R.
\]
Hence specifying a morphism
\(\overline{\mathrm{ST}}_{R}\setminus\{0\}\to\mathcal{X}\)
is equivalent to giving two morphisms \(\textup{Spec} R\to\mathcal{X}\) together with an isomorphism of their restrictions over \(\textup{Spec} K\).

\end{remark}
We can now state the existence result.
\begin{theorem}[J. Alper, D. Halpern Leistner and J. Heinloth \cite{AHLH18}]\label{AHLHmaintheorem}
    An algebraic stack $\X$ of finite type with affine stabilizers admits a good moduli space if and only if $\X$ is $\Theta$-reductive and \textbf{S}-complete.
\end{theorem}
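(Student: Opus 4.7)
The plan is to treat the two implications separately. Necessity should follow from a short valuative argument using the universal property of a good moduli morphism, while sufficiency is substantially harder and will require combining the local structure theorem of Alper--Hall--Rydh with a gluing procedure controlled by the two hypotheses.

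For necessity, let $\phi:\X\to Y$ be the good moduli morphism. Given a map $f:\Theta_R\setminus\{0\}\to\X$ as in the definition of $\Theta$-reductivity, I would first push forward to $\Theta_R\setminus\{0\}\to Y$, which extends uniquely over $\{0\}$ because $Y$ is an algebraic space, $\Theta_R$ is regular, and the missing locus has codimension two. To lift this extension back to $\X$, one uses that $\phi$ is universally closed with $\phi_*\O_\X=\O_Y$: by Remark \ref{thetareductive}, specifying the lift amounts to giving a morphism $\Spec R\to\X$ compatible with $f|_{\Theta_K}$ over $\Spec K$, and the good-moduli properties force both its existence and uniqueness. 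An identical argument with $\overline{\mathrm{ST}}_R$ in place of $\Theta_R$, using Remark \ref{Scomplete}, yields \textbf{S}-completeness.

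For sufficiency, assume $\X$ is $\Theta$-reductive and \textbf{S}-complete. I would proceed in two stages. First, at each closed point $x\in\X$, invoke the Alper--Hall--Rydh local structure theorem to produce an étale neighborhood of the form $[\Spec A/G_x]\to\X$, whose classical GIT quotient $U_x=\Spec A^{G_x}$ serves as a local good moduli space; $\Theta$-reductivity plays a role here in forcing the stabilizer $G_x$ at a closed point to be reductive, a hypothesis needed to apply the local structure theorem. Second, glue the $U_x$ into a global algebraic space $Y$: $\Theta$-reductivity provides a functorial semistable reduction over DVRs, ensuring that the assignment of each point of $\X$ to its closed orbit is well-defined and compatible across charts, while \textbf{S}-completeness supplies the valuative separation needed for $Y$ to be an algebraic space rather than a non-separated prestack. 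The two defining axioms of a good moduli space are local and reduce on each chart to the affine invariant-theoretic case.

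The main obstacle will be the gluing step: verifying that the induced maps of local GIT quotients $U_x\times_\X U_y\to U_x,\,U_y$ are étale and that the resulting algebraic space is separated. I expect \textbf{S}-completeness to be decisive here, ruling out the pathology in which two distinct closed orbits degenerate to a common limit --- precisely the obstruction to $Y$ being separated. Once these local-to-global compatibilities are in hand, exactness of $\phi_*$ and the identification $\O_Y\cong\phi_*\O_\X$ will follow by reducing to the GIT quotient of each chart, where they hold by classical invariant theory.
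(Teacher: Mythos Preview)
The paper does not prove this theorem: it is stated as a cited result from \cite{AHLH18} and used as a black box (notably in Proposition~\ref{criterionab} and Theorem~\ref{ssgoodmodulispace}). There is therefore no ``paper's own proof'' to compare against; the correct response in context is simply to cite the reference, not to reproduce the argument.

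That said, your sketch does roughly track the architecture of the original proof in \cite{AHLH18}. A few cautions if you intend to flesh it out elsewhere. For necessity, the codimension-two extension argument for $Y$ is not available as stated: $Y$ need not be regular, and the actual proofs in \cite{AHLH18} that good moduli morphisms are $\Theta$-reductive and \textbf{S}-complete proceed instead via the valuative/universal-closedness properties of $\phi$ and the fact that $B\mathbb{G}_m$ and the relevant quotient stacks have no nontrivial maps to a separated space. For sufficiency, the real technical weight in \cite{AHLH18} lies in showing that the local quotient presentations can be made \emph{strongly \'etale} (so that the induced maps on GIT quotients are \'etale and the fiber products match), and this step uses both hypotheses in a more intricate way than your sketch indicates; in particular, \textbf{S}-completeness is used not only for separation of $Y$ but already to guarantee that closed points have \emph{linearly} reductive stabilizers, which is what the local structure theorem requires in positive or mixed characteristic.
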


\subsection{The existence of Good moduli spaces for Semistable objects}
We fix a stability condition $\sigma_0\in\mathcal{V}_X$ on the wall i.e. $\arg(z_0)=\arg(z_1)=:\eta$. Note that $\arg\eta>1/2$. So an object $E$ with slope greater than $1/2$ must be either of positive rank or has $0$-dimensional support. In this subsection we prove the following theorem.
\begin{theorem}\label{ssgoodmodulispace}
        For any numerical type $v\in\Gamma^0\cup\Gamma^1$. The stack  $\M^{ss,v}$ admits a separated good moduli space.
\end{theorem}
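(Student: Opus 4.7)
The plan is to apply the existence criterion of Alper--Halpern-Leistner--Heinloth (Theorem \ref{AHLHmaintheorem}). This reduces the problem to checking three things: that $\M^{ss,v}$ is of finite type with affine stabilizers, that it is $\Theta$-reductive, and that it is $\mathbf{S}$-complete. Finite type is already part of Toda's theorem recalled above, and affine stabilizers hold because for any object $E\in\A_X$ the automorphism group is an open subscheme of the affine scheme $\Hom(E,E)$. The substantive work lies in the two valuative criteria, and separatedness of the resulting good moduli space will follow from $\mathbf{S}$-completeness.

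For $\Theta$-reductivity I would unpack the condition as in Remark \ref{thetareductive}. We are given: (a) a $\mathbb{Z}$-weighted filtration of a $\sigma_0$-semistable object $E_K\in\A_X$ over $\Spec K$, and (b) an $R$-flat semistable family $E_R\in\A_X$ restricting to $E_K$ over the generic point. The task is to extend the filtration to an $R$-flat filtration of $E_R$. The natural construction is the scheme-theoretic closure: each subobject $F^j_K\subset E_K$ should extend uniquely to an $R$-flat subobject $F^j_R\subset E_R$ in the relative heart over $\Spec R$. This reduces to a Noetherianity and flat-extension statement for the tilted heart $\A_X$, which I would derive from the corresponding classical results for $\mathrm{Coh}(X\times\Spec R)$ via the two-term presentation of objects of $\A_X$. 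Once the filtration is extended, its semistability on the wall is automatic, since the slopes of all subquotients coincide with those on the generic fibre.

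For $\mathbf{S}$-completeness, Remark \ref{Scomplete} translates the data into two $R$-flat semistable families $E_1,E_2\in\A_X$ together with an isomorphism $\phi_K\colon E_{1,K}\xrightarrow{\sim}E_{2,K}$, and asks for a $\mathbb{G}_m$-equivariant extension to $\Spec\bigl(R[s,t]/(st-\pi)\bigr)$. I would use the usual Rees-module construction: identifying $E_{1,K}$ with $E_{2,K}$ via $\phi_K$, form the $\mathbb{Z}$-indexed intersections $(\pi^n E_1\cap E_2)_{n\in\mathbb{Z}}$ inside this common generic fibre and assemble them into the desired $\mathbb{G}_m$-equivariant $R$-flat family, with $s,t$ acting via the filtration shift and multiplication by $\pi$. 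The verification requires showing that each intersection is an $R$-flat subobject of $\A_X$, and that the special fibre of the resulting family is again $\sigma_0$-semistable of class $v$; the latter holds because it is an iterated extension of graded pieces all of the same slope.

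The main obstacle is setting up the Noetherian and flat-extension theory needed for families in $\A_X$ over a DVR, since $\A_X$ is not the standard coherent heart but contains both $\O_X$ and the shifted torsion subcategory $\mathrm{Coh}_{\leq 1}(X)[-1]$. I expect to handle this by writing each family as an extension of $\O_X^{\oplus r}$ (on top) by a shifted sheaf $T[-1]$ with $T\in\mathrm{Coh}_{\leq 1}$, transferring flatness and closure questions to the two ordinary coherent pieces and the connecting map. A secondary subtlety is that on the wall the condition $\arg z_0=\arg z_1$ permits filtrations that freely mix classes in $\Gamma^0$ and $\Gamma^1$, so one must verify that the extended filtrations and Rees families stay inside $\Gamma^0\cup\Gamma^1$ and that semistability is preserved under this mixing; since any slope $>1/2$ object with class in $\Gamma^0\cup\Gamma^1$ must be of rank $0$ or $1$, this stratification is forced and the bookkeeping is manageable.
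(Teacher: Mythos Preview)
Your overall strategy---verify $\Theta$-reductivity and $\mathbf{S}$-completeness and invoke Theorem~\ref{AHLHmaintheorem}---is the same as the paper's, but your execution diverges from the paper's in a way that both makes the argument harder and leaves a real gap.

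The paper does not construct the extensions over $\Theta_R$ and $\overline{\mathrm{ST}}_R$ by hand. Instead it embeds $\M^{ss,v}$ as an open substack of $\M_\A$, the moduli stack of objects in a \emph{larger} tilted heart $\A\subset D^b(X)$ obtained from slope-tilting on all of $\mathrm{Coh}(X)$. For this heart, generic flatness, Noetherianity and boundedness are already established by Piyaratne--Toda, and then Halpern-Leistner's and AHLH's general machinery (Proposition~\ref{MAisStheta}) give $\Theta$-reductivity and $\mathbf{S}$-completeness of $\M_\A$ for free. The lifting therefore exists automatically in $\M_\A$; the only remaining question is whether the image of the closed point $0$ lands in the open substack $\M^{ss,v}$. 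This is the content of Proposition~\ref{criterionab} and is checked via three short elementary lemmas about subobjects, quotients and extensions of $\sigma_0$-semistable objects of slope $>1/2$ (Lemmas~\ref{subobjectosss}--\ref{directsumofwithpoint}). Your proposal to build the flat extensions and Rees families directly inside $\A_X$ would amount to redoing the Piyaratne--Toda/Halpern-Leistner input from scratch for the non-standard heart $\A_X$, which is exactly the ``main obstacle'' you identify; the paper sidesteps it entirely.

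There is also a genuine gap in your semistability verification. You write that once the filtration is extended, semistability of the associated graded on the special fibre ``is automatic, since the slopes of all subquotients coincide with those on the generic fibre,'' and similarly for the Rees construction that the special fibre ``is an iterated extension of graded pieces all of the same slope.'' Equality of slopes is \emph{not} enough: an extension of objects of the same slope need not be semistable, and a specialization of a semistable subquotient need not remain semistable. What the paper actually proves (and what you would need) is that in this particular situation---rank~$1$ objects on the wall, slope $>1/2$---every subobject of a semistable object of slope $>1/2$ is semistable, every quotient is semistable of slope $>1/2$, and the class of such semistables is closed under extensions. These are Lemmas~\ref{subobjectosss}--\ref{directsumofwithpoint}, and they are used essentially in checking that $\mathrm{gr}(E_\bullet|_\kappa)$ lies in $\M^{ss,v}$. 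Without them, neither your $\Theta$-reductivity nor your $\mathbf{S}$-completeness argument is complete.
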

We follow a similar strategy in \cite{AHLH18}, first show that the moduli stack of objects in $\A_X$ is \textbf{S}-complete and $\Theta$-reductive with respect to DVRs essentially of finite type, and then we verify that these properties descend to the semistable locus.

We recall the notion of tilting heart following \cite{toda2020hall}. Let $\omega$ be an ample class in $H^2(X,\mathbb{Z})$. We define a slope function $\mu=\mu_\omega:K_0(X)\to\mathbb{Q}\cup\{\infty\}$ by
\[
\mu(E)=\mu_\omega(E):=\frac{c_1(E) \cdot \omega^2}{\textup{rank}(E)}.
\]
Then we have subcategories
\begin{align*}
    \textup{Coh}_{\mu\leq0}(X)&:=\langle E\in\textup{Coh}(X):E \textup{ is }\mu \textup{ semistable and } \mu(E)\leq0  \rangle,\\
\textup{Coh}_{\mu>0}(X)&:=\langle E\in\textup{Coh}(X):E \textup{ is }\mu \textup{ semistable and } \mu(E)>0  \rangle.
\end{align*}
These two subcategories  defines a torsion pair 
\[
\textup{Coh}(X)=\langle \textup{Coh}_{\mu>0}(X), \textup{Coh}_{\mu\leq0}(X)\rangle
\]
and thus we may take its tilting
\[
\A=\langle \textup{Coh}_{\mu\leq0}(X),\textup{Coh}_{\mu>0}(X)[-1]\rangle.
\]

By the discussions in \cite[Section~3]{Piyaratne2015ModuliOB}, the heart $\mathrm{Coh}(X)$ together with the classical slope function is a very weak stability condition, satisfies a BG inequality and good in the sense of \cite{Piyaratne2015ModuliOB}. 
Then \cite[Proposition~4.11, 4.14]{Piyaratne2015ModuliOB} guarantee the tilting heart $\A$ is noetherian, bounded and satisfying generic flatness property.
\begin{proposition}\label{MAisStheta}
    The moduli stack $\M_\A$ is an open substack of the moduli stack of universally gluable perfect complexes $\M$ and is \textbf{S}-complete and $\Theta$-reductive with respect to DVR $R$ essentially of finite type.
\end{proposition}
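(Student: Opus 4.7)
The plan is to verify the statement in three steps: openness of $\M_\A \hookrightarrow \M$, $\Theta$-reductivity, and $\mathbf{S}$-completeness of $\M_\A$. For the openness part, recall that $\A$ is obtained from $\textup{Coh}(X)$ by tilting along the torsion pair $(\textup{Coh}_{\mu>0}(X), \textup{Coh}_{\mu\leq 0}(X))$. An object $E \in D^b(X_T)$ of the universal complex lies in $\A$ at every geometric fiber precisely when its ordinary $\textup{Coh}(X)$-cohomology sheaves are concentrated in two consecutive degrees and lie in the prescribed torsion classes pointwise. Both conditions are open in families of universally gluable perfect complexes, because $(\textup{Coh}(X),\mu_\omega)$ is a weak stability condition satisfying generic flatness in the sense of \cite{Piyaratne2015ModuliOB}; this yields $\M_\A \subset \M$ as an open substack.

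For $\Theta$-reductivity, I would use Remark \ref{thetareductive}: a morphism $\Theta_R \setminus \{0\} \to \M_\A$ amounts to an object $E \in \A_R$ together with a $\mathbb{Z}$-indexed filtration $\cdots \subset F_i \subset F_{i+1} \subset \cdots$ of $E_K := E \otimes_R K$ in $\A_K$, stabilizing at $0$ and at $E_K$. To extend the morphism to $\Theta_R$, one must produce the corresponding filtration of $E$ over $R$. For each $i$ I would define $\widetilde{F}_i \subset E$ as the largest subobject in $\A_R$ whose generic fiber is $F_i$; existence and uniqueness of $\widetilde{F}_i$ follow from the noetherianity of $\A_R$, and the resulting filtration is finite by the boundedness of $\A$.

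For $\mathbf{S}$-completeness, Remark \ref{Scomplete} identifies a morphism $\overline{\mathrm{ST}}_R \setminus \{0\} \to \M_\A$ with two $R$-families $E_1, E_2 \in \A_R$ together with an isomorphism $\phi : E_1 \otimes_R K \xrightarrow{\sim} E_2 \otimes_R K$. Using $\phi$ to regard $E_1$ and $E_2$ as lattices in a common object $V \in \A_K$, I would build the required $\overline{\mathrm{ST}}_R$-family from the $\mathbb{Z}$-filtered object whose pieces are $G_i := E_1 \cap \pi^{-i} E_2$ inside $V$. The chain $\{G_i\}_{i \in \mathbb{Z}}$ stabilizes on both sides by the noetherianity and boundedness of $\A$, and the natural inclusions furnish maps $s : G_i \to G_{i+1}$ and $t : G_{i+1} \to G_i$ with $s \circ t = \pi$, providing the desired $\mathbb{G}_m$-equivariant family over $\Spec R[s,t]/(st - \pi)$.

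The main obstacle is justifying that these maximal subobjects and lattice intersections really stay inside $\A_R$ over the DVR $R$, and that limits, quotients, and colimits taken inside the generic fiber descend to the $R$-level. This is precisely the reason for restricting to DVRs essentially of finite type: such a restriction ensures that the slope torsion pair behaves well in one-parameter families, so that $\A_R$ inherits the noetherianity, boundedness, and generic flatness properties enjoyed by $\A$ over $\mathbb{C}$. Once these properties are verified over $R$, the three checks above become standard applications of the abelian-categorical framework used in \cite{AHLH18}.
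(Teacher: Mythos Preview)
Your proposal is essentially correct and amounts to unpacking the black-box citations that the paper invokes. The paper's proof is a single sentence: it observes that $\A$ is noetherian and satisfies generic flatness (by \cite{Piyaratne2015ModuliOB}), and then directly cites \cite[Propositions~6.2.7, 6.2.10]{HalpernLeistner2014} and \cite[Propositions~7.16, 7.17]{AHLH18}, which state precisely that the moduli stack of objects in such a heart is open, $\Theta$-reductive, and $\mathbf{S}$-complete with respect to essentially finite-type DVRs.

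What you have written is a sketch of those cited propositions: openness via the torsion pair and generic flatness, $\Theta$-reductivity via the maximal extension $\widetilde{F}_i\subset E$ of a generic-fibre subobject, and $\mathbf{S}$-completeness via the Langton-type lattice $G_i = E_1\cap\pi^{-i}E_2$. These are exactly the constructions that appear in the proofs of \cite[7.16, 7.17]{AHLH18}, so your route and the paper's route are the same at the level of ideas; you simply open the boxes the paper leaves closed. One small point to tighten if you pursue this: in the $\Theta$-reductive step you should also check that the successive quotients $\widetilde{F}_{i+1}/\widetilde{F}_i$ are $R$-flat with fibres in $\A$ (this is part of what makes the extended filtration a $\Theta_R$-point of $\M_\A$, not just of $\M$), and this is where generic flatness of the heart is really used, not just noetherianity.
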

\begin{proof}
    This follows directly from the the fact that $\A$ satisfies the generic flatness condition and \cite[Proposition 6.2.7, 6.2.10]{HalpernLeistner2014} (see also \cite[Proposition 7.16, 7.17]{AHLH18}).
\end{proof}

\begin{lemma}\label{identificationlemma}
    Let $\M_\A'$ be the closed and open substack of object in $\A$ of rank $1$ and $c_1=0$, then we have an identification of stacks
    \[
    \M_\A'=\coprod_{\textup{rk}(v)=1}\O bj^{v}(\A_X)
    \]
\end{lemma}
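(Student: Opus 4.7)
The plan is to establish the identification as substacks of the master stack $\M$, and the core is pointwise: any $E\in\A$ of rank $1$ with $c_1=0$ should automatically lie in $\A_X$. I would first dispatch the easy direction $\A_X\subseteq\A$ by checking the generators. Since $\mu(\O_X)=0$ we have $\O_X\in\textup{Coh}_{\mu\leq0}(X)\subset\A$, and every $T\in\textup{Coh}_{\leq1}(X)$ is torsion and therefore lies in the positive-slope part $\textup{Coh}_{\mu>0}(X)$ of the torsion pair, so $T[-1]\in\A$; extension-closure of $\A$ then gives $\A_X\subseteq\A$, and in particular every rank-$1$ object in $\A_X$ automatically has $c_1=0$.

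For the reverse pointwise inclusion, I would take $E\in\A$ with $\textup{rk}(E)=1$ and $c_1(E)=0$ and use the torsion pair on $\A$ to produce a short exact sequence in $\A$,
\[
0\to G\to E\to F[-1]\to0,
\]
with $G\in\textup{Coh}_{\mu\leq0}(X)$ and $F\in\textup{Coh}_{\mu>0}(X)$. Chern character additivity gives $\textup{rk}(G)-\textup{rk}(F)=1$ and $c_1(G)=c_1(F)$. The key slope estimate is that every HN factor of $G$ has $\mu\leq0$, so $c_1(G)\cdot\omega^2\leq0$, whereas any torsion-free part of $F$ contributes $c_1\cdot\omega^2>0$ and any $2$-dimensional torsion summand of $F$ contributes an effective nonzero first Chern class. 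This forces $F\in\textup{Coh}_{\leq1}(X)$, so in particular $F[-1]\in\A_X$; and $G$ is then torsion-free of rank $1$ with $c_1=0$. Using $H^1(X,\O_X)=0$, so $\textup{Pic}^0(X)=0$, one has $G^{\vee\vee}\cong\O_X$, and the inclusion $G\hookrightarrow G^{\vee\vee}$ realizes $G$ as an ideal sheaf $I_Z$. Since $c_1(I_Z)=0$, the subscheme $Z$ has codimension at least $2$, so $\O_Z\in\textup{Coh}_{\leq1}(X)$; the triangle $\O_Z[-1]\to I_Z\to\O_X$ puts $G\in\A_X$, and extension-closure of $\A_X$ applied to the short exact sequence then yields $E\in\A_X$.

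Finally, to upgrade the pointwise bijection to an identification of stacks, I would use that both $\M_\A'$ and $\coprod_{\textup{rk}(v)=1}\O bj^v(\A_X)$ are open-and-closed substacks of the master stack $\M$ cut out by locally constant numerical invariants, and that the moduli conditions defining each are checked on geometric fibers; hence the identification of $\mathbb{C}$-points extends to an identification of substacks. The main obstacle is the slope estimate in the second step: the elimination of torsion-free and $2$-dimensional-torsion contributions to $F$ requires the constraint $c_1(G)=c_1(F)$ to interact with the separate sign conditions $c_1(G)\cdot\omega^2\leq0$ and $c_1(F)\cdot\omega^2\geq0$ coming from the two halves of the torsion pair, and it is essential that the convention places all torsion sheaves in $\textup{Coh}_{\mu>0}(X)$ so that $G$ is genuinely torsion-free and the reflexive-hull identification $G=I_Z$ is available.
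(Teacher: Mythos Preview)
Your approach is essentially the same as the paper's: both reduce to the pointwise identification of subcategories $\A_X^{\textup{rk}=1}=\A^{\textup{rk}=1,c_1=0}$, take the torsion-pair triangle in $\A$, and use the $c_1$ constraint together with the slope conditions on the two halves to force the $\textup{Coh}_{\mu>0}$ piece into $\textup{Coh}_{\leq1}(X)$ and the $\textup{Coh}_{\mu\leq0}$ piece to be a rank-one ideal sheaf. Your slope-estimate step (separating HN-factor contributions to $c_1\cdot\omega^2$ on each side) is in fact more explicit than the paper's, which passes directly from $c_1(F)=c_1(T)$ to $c_1(F)=c_1(T)=0$ without spelling out the intermediate sign argument.
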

\begin{proof}
    We prove that there is an identification of subcategories
    \[
    \A_X^{\textup{rk}=1}=\A^{\textup{rk}=1,c_1=0}
    \]
    where left hand side is the category of rank $1$ objects in $\A_X$ and the right hand side is the category of rank $1$ objects with vanishing $c_1$.
    Since $\A$ is an abelian category, the inclusion $\A_X^{\textup{rk}=1}\subset\A^{\textup{rk}=1,c_1=0}$ is obvious. We show the converse. Let $E$ be an object in $\A^{\textup{rk}=1,c_1=0}$ there is a distinguished triangle, by the construction of torsion pair,
    \[
    F\to E\to T[-1]
    \]
    where $F\in\textup{Coh}_{\mu\leq0}(X)$, $T\in\textup{Coh}_{\mu>0}(X)$.

    Since $c_1(F)-c_1(T)=c_1(E)=0$, we have $c_1(F)=c_1(T)=0$. This implies that $\mu(E)=0$ and $\mu(T)=\infty$. A $\mu$-semistable sheaf of rank $1$ with vanishing $c_1$ is an ideal sheaf $I_C$ for some closed subscheme $C$ and a $\mu$-semistable sheaf of rank $0$ with vanishing $c_1$ is a coherent sheaf of support dimension $\leq1$. Thus $E$ is an extension of objects in $\A_X$ and hence lies in $\A_X$.
\end{proof}

The following proposition provides a more practical criterion.
\begin{proposition}\label{criterionab}
    A quasi-compact open substack $\mathcal{U}\subset\M_\A$ admits a separated good moduli space if the following statements holds.
    \begin{enumerate}
        \item[(a)]($\Theta$-reductive) Let $R$ be a DVR essentially of finite type with fraction field $K$ and residue field $\kappa$. For every $R$-point $E$ of $\M_\A$ (i.e., an object in $D^b(X_R)$ perfect relative in $R$ and $R$-flat) and every $\mathbb{Z}$-graded filtration 
        \[
        0\subset \cdots\subset E_{i-1}\subset E_i\subset E_{i+1}\subset\cdots\subset E
        \]
        satisfying $E_i=0$ for $i<<0$ and $E_i=E$ for $i>>0$, such that $E_i/E_{i+1}$ are flat over $R$. If $E$ and $\textup{gr}(E_{\bullet}|_{K})$ are both in $\mathcal{U}$. Then $\textup{gr}(E_\bullet|_\kappa)$ is also in $\mathcal{U}$. 
        \item[(b)]($\mathbf{S}$-completeness) For any two $\kappa$-points $E$, $F$ of $\mathcal{U}$, if there are $\mathbb{Z}$-graded filtrations in $\M_A(\kappa)$
        \begin{align*}
              0\subset \cdots\subset E_{i-1}\subset E_i\subset E_{i+1}\subset\cdots\subset E\\
              F\supset \cdots\supset F^{i-1}\supset F^i\supset F^{i+1}\supset\cdots\supset 0
        \end{align*}
        satisfying $E_i=0$, $F^i=F$ for $i\ll0$ and $E_i=E$, $F^i=0$ for $i\gg0$ and $E_i/E_{i-1}\cong F^i/F^{i+1}$ for all $i$. Then $\textup{gr}(E_\bullet)$ is also in $\mathcal{U}(\kappa)$.
    \end{enumerate}
\end{proposition}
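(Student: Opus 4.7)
The plan is to invoke the existence criterion of Theorem~\ref{AHLHmaintheorem}. Since $\mathcal{U}$ is a quasi-compact open substack of $\M_\A$, it is of finite type and its stabilizers are affine (inherited from those of $\M_\A$). What remains is to check that $\mathcal{U}$ is $\Theta$-reductive and $\mathbf{S}$-complete. By a standard approximation argument for finite type stacks \cite{AHLH18}, these properties may be tested on DVRs essentially of finite type, which is exactly the range in which Proposition~\ref{MAisStheta} is available for the ambient stack.

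For $\Theta$-reductivity, I would start with a morphism $\varphi:\Theta_R\setminus\{0\}\to\mathcal{U}$ and first compose with the open inclusion $\mathcal{U}\hookrightarrow\M_\A$. Proposition~\ref{MAisStheta} then produces a unique extension $\tilde\varphi:\Theta_R\to\M_\A$. Under the standard dictionary of \cite{HalpernLeistner2014} for $\Theta$-points of moduli stacks of objects in an abelian category, $\tilde\varphi$ is precisely the data of an $R$-point $E$ of $\M_\A$ together with a $\mathbb{Z}$-graded filtration $E_\bullet$ whose subquotients are $R$-flat; moreover, the unique closed point $0\in\Theta_R$ maps to the associated graded $\textup{gr}(E_\bullet|_\kappa)$ viewed as a graded $\kappa$-point via its $\mathbb{G}_m$-action. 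Because $\mathcal{U}\subset\M_\A$ is open, $\tilde\varphi^{-1}(\mathcal{U})$ is an open substack of $\Theta_R$ that already contains $\Theta_R\setminus\{0\}$; hypothesis (a) supplies the one missing input $\textup{gr}(E_\bullet|_\kappa)\in\mathcal{U}$, forcing $\tilde\varphi$ to factor through $\mathcal{U}$.

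The argument for $\mathbf{S}$-completeness is formally parallel. A morphism $\overline{\mathrm{ST}}_R\setminus\{0\}\to\mathcal{U}$ is, by Remark~\ref{Scomplete}, a pair of $R$-points $E,F$ of $\mathcal{U}$ equipped with an identification over $\textup{Spec}\,K$. $\mathbf{S}$-completeness of $\M_\A$ gives a unique extension to $\overline{\mathrm{ST}}_R\to\M_\A$. Under the analogous dictionary, this extension is encoded by two $\mathbb{Z}$-filtrations $E_\bullet,F^\bullet$ of the central-fiber objects with matched subquotients $E_i/E_{i-1}\cong F^i/F^{i+1}$ as in (b), and the closed point $0$ maps to their common associated graded. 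Openness of $\mathcal{U}$ together with hypothesis (b) then forces the extension to factor through $\mathcal{U}$, completing the verification. Separatedness of the resulting good moduli space is part of the conclusion of Theorem~\ref{AHLHmaintheorem} once $\mathbf{S}$-completeness holds.

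The step I expect to be the most delicate is not a geometric difficulty but the bookkeeping in the translation between stack-theoretic mapping data ($\Theta_R$- and $\overline{\mathrm{ST}}_R$-points of $\M_\A$) and the concrete filtration/graded-object data appearing in conditions (a) and (b). One must identify precisely which closed point of the test stack corresponds to the associated graded over the residue field, track the $R$-flatness and matching conditions on subquotients through the dictionary, and confirm that the extensions produced in $\M_\A$ by Proposition~\ref{MAisStheta} recover exactly these data. Once this bookkeeping has been pinned down, the criterion follows immediately from Theorem~\ref{AHLHmaintheorem}.
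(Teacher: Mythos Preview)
Your proposal is correct and follows essentially the same approach as the paper: both arguments reduce to Theorem~\ref{AHLHmaintheorem}, use Proposition~\ref{MAisStheta} to extend the lifting problems to $\M_\A$, and then invoke the dictionary between $\Theta_R$- (resp.\ $\overline{\mathrm{ST}}_R$-)points and filtration data together with openness of $\mathcal{U}$ to check that the image of the closed point $0$ lands in $\mathcal{U}$ under hypotheses (a) and (b). Your identification of the translation bookkeeping as the only delicate step is accurate, and the paper handles it in exactly the way you outline (citing \cite[Corollary~7.13]{AHLH18} for the $\Theta_R$ dictionary and describing the special fiber $\pi=0$ for $\overline{\mathrm{ST}}_R$).
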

\begin{proof}
    By Theorem~\ref{AHLHmaintheorem}, it suffices to verify $\Theta$-reductivity and $\mathbf{S}$-completeness for $\mathcal{U}$. By Proposition~\ref{MAisStheta}, $\M_A$ is \textbf{S}-complete and $\Theta$-reductive, the $\Theta$-reductivity and \textbf{S}-completeness can be reduced to the existence of the following lifting diagrams
    \[\begin{tikzcd}
	\Theta_R\setminus\{0\} & \mathcal{U} && \overline{\mathrm{ST}}_R\setminus\{0\} & \mathcal{U} \\
	\Theta_R & \M_A && \overline{\mathrm{ST}}_R & \M_A
	\arrow[from=1-1, to=1-2]
	\arrow[from=1-1, to=2-1]
	\arrow[from=1-2, to=2-2]
	\arrow[from=1-4, to=1-5]
	\arrow[from=1-4, to=2-4]
	\arrow[from=1-5, to=2-5]
	\arrow[dashed, from=2-1, to=1-2]
	\arrow[from=2-1, to=2-2]
	\arrow[dashed, from=2-4, to=1-5]
	\arrow[from=2-4, to=2-5]
\end{tikzcd}\]
In other words, the lifting already exists in $\M_A$, we just need to make sure it lies in $\mathcal{U}$. And we only need to show the image of $0$ lies in $\mathcal{U}$.

We start from $\Theta$-reductivity. By \cite[Corollary 7.13.]{AHLH18}, a $\Theta_R$-point is the same as a $\mathbb{Z}$-gradred filtration
        \[
        0\subset \cdots\subset E_{i-1}\subset E_i\subset E_{i+1}\subset\cdots\subset E
        \]
        satisfying $E_i=0$ for $i<<0$ and $E_i=E$ for $i>>0$, such that $E_i/E_{i+1}$ are flat over $R$.
Since its restriction to $\Theta_R\setminus\{0\}$ maps to $\mathcal{U}$, we also know that $E$ and $\text{gr}(E_\bullet|_K)$ both lies in $\mathcal{U}$ and the restriction to $0$ corresponds to a $\left[\text{Spec}(\kappa)/\mathbb{G}_m\right]$-point.

We then consider \textbf{S}-completeness. The restriction to $(\pi=0)$ i.e. $\left[\text{Spec}(\kappa[s,t]/st)/\mathbb{G}_{m,\kappa}\right]$ corresponds to the above description i.e. opposite filtrations and the point corresponds to $0$ is $\textup{gr}(E_\bullet)$.
\end{proof}
This proposition originally appears in Alper's lecture notes \cite{AlperModuliNotes}, where it is used in the proof that the moduli stack of semistable vector bundles admits a good moduli space. In the original setting, the abelian category under consideration is that of coherent sheaves. Alper applies this proposition in combination with certain properties of semistable vector bundles to establish the existence of a good moduli space. Motivated by this strategy, we aim to first prove that certain semistable objects in our setting exhibit analogous properties.

\begin{lemma}\label{subobjectosss}
    Let $E$ be a $\sigma_0$-semistable object of rank $1$. Then any subobject of $E$  with slope $>1/2$ is also $\sigma_0$-semistable.
\end{lemma}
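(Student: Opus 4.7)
The plan is a slope-destabilization argument, exploiting the fact that on the DT/PT wall only two values of $\arg Z_{\sigma_0}$ occur in $\mathcal{A}_X$. Because $Z_{\sigma_0}$ is defined stratum-by-stratum along the filtration $\Gamma_\bullet$, for any nonzero $G\in\mathcal{A}_X$ one has $\arg Z_{\sigma_0}(G)\in\{\pi/2,\eta\}$, where $\eta=\arg z_0=\arg z_1\in(\pi/2,\pi)$: it equals $\pi/2$ exactly when $G$ has rank $0$ with one-dimensional support (so $Z(G)=i(\omega\cdot\beta)$), and it equals $\eta$ when $G$ has rank $1$ or rank $0$ with zero-dimensional support. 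The hypothesis that $F\subset E$ has slope $>1/2$ therefore means $\arg Z(F)=\eta$, so $F$ lies in one of the latter two classes.

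First I would dispatch the case $\mathrm{rk}(F)=0$. Any rank-$0$ object of $\mathcal{A}_X$ lies in $\mathrm{Coh}_{\leq1}(X)[-1]$, and having additionally zero-dimensional support, $F\cong T[-1]$ for a zero-dimensional coherent sheaf $T$. Every subobject and quotient of $F$ in $\mathcal{A}_X$ is again of the same form, so every short exact sequence inside $F$ has all phases equal to $\eta$, and $F$ is $\sigma_0$-semistable trivially.

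The substantive case is $\mathrm{rk}(F)=1$. Applying $\sigma_0$-semistability of $E$ to $F\hookrightarrow E$ yields $\arg Z(E/F)\geq\eta$, which, since $E/F$ has rank $0$, forces $\mathrm{ch}_2(E/F)=0$ (otherwise its phase would be $\pi/2$). Now assume for contradiction that $F$ is not $\sigma_0$-semistable, and pick a destabilizing short exact sequence
\[
0\to F'\to F\to F''\to 0
\]
in $\mathcal{A}_X$ with $\arg Z(F')>\arg Z(F'')$. The two-value dichotomy forces $\arg Z(F')=\eta$ and $\arg Z(F'')=\pi/2$; in particular $\mathrm{ch}_2(F'')\neq 0$. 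Combining this with $\mathrm{ch}_2(E/F)=0$ and the induced exact sequence $0\to F''\to E/F'\to E/F\to 0$ yields $\mathrm{ch}_2(E/F')=\mathrm{ch}_2(F'')\neq 0$, so $E/F'$ has one-dimensional support and phase $\pi/2$. But then the inclusion $F'\hookrightarrow E$ gives $\arg Z(F')=\eta>\pi/2=\arg Z(E/F')$, contradicting the $\sigma_0$-semistability of $E$.

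The main obstacle, though a mild one, is the slope bookkeeping: because $Z_{\sigma_0}$ is only piecewise defined via the filtration $\Gamma_\bullet$, one must verify that the phase of $E/F'$ is read off from its $\mathrm{ch}_2$-stratum and not accidentally shifted to the $\mathrm{ch}_3$-stratum by some unexpected cancellation. This is exactly what the combination $\mathrm{ch}_2(F'')\neq 0$ and $\mathrm{ch}_2(E/F)=0$ guarantees, so the argument closes.
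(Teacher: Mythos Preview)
Your argument is correct and follows the paper's approach: dispatch the rank-$0$ case trivially, then in rank $1$ take a destabilizing sequence $0\to F'\to F\to F''\to 0$ with $\arg Z(F'')=\pi/2$ and show that $F'\hookrightarrow E$ destabilizes $E$.

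One small point deserves to be made explicit. To conclude that $E/F'$ has phase $\pi/2$ from $\mathrm{ch}_2(E/F')\neq 0$ you need $\mathrm{rk}(E/F')=0$; otherwise its class lies in $\Gamma_2\setminus\Gamma_1$ and $Z$ is read off from the rank, giving phase $\eta$. This does hold, because $\arg Z(F'')=\pi/2$ forces (by your own dichotomy) $\mathrm{rk}(F'')=0$, hence $\mathrm{rk}(F')=1$ and $\mathrm{rk}(E/F')=0$. Your final paragraph therefore worries about the wrong end of the filtration: the danger is not that $\mathrm{cl}(E/F')$ drops to the $\mathrm{ch}_3$-stratum $\Gamma_0$ (your $\mathrm{ch}_2$ computation already rules that out) but that it sits in the rank-stratum $\Gamma_2\setminus\Gamma_1$, and it is precisely this rank bookkeeping that closes the argument.

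The paper reaches the same contradiction slightly more directly, without computing $\mathrm{ch}_2(E/F)$: once one knows $\mathrm{rk}(F')=1$, both $F''$ and $E/F'$ are rank-$0$ objects, i.e.\ shifted sheaves in $\mathrm{Coh}_{\leq 1}(X)$, and the inclusion $F''\hookrightarrow E/F'$ in $\mathcal{A}_X$ shows $\mathrm{supp}(F'')\subset\mathrm{supp}(E/F')$, so $E/F'$ has one-dimensional support and hence phase $\pi/2$.
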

\begin{proof}
    Let $F$ be a subobject of $E$. If $F$ has rank $0$ then $F$ must be a shift of $0$-dimensional sheaf which is obviously semistable. So we may assume $F$ has rank $1$. If $F$ is not semistable, then there is an exact sequence in $\A_X$ that destabilize $F$
    \[
    \begin{tikzcd}
	0 & G & F & F/G & 0
	\arrow[from=1-1, to=1-2]
	\arrow[from=1-2, to=1-3]
	\arrow[from=1-3, to=1-4]
	\arrow[from=1-4, to=1-5]
    \end{tikzcd}
    \]
    where $F/G$ has slope $=1/2$. Now consider the cokernel of $G\to F\to E$ we get an exact sequence
    \[
    \begin{tikzcd}
	0 & F/G & E/G &E/F & 0
	\arrow[from=1-1, to=1-2]
	\arrow[from=1-2, to=1-3]
	\arrow[from=1-3, to=1-4]
	\arrow[from=1-4, to=1-5]
    \end{tikzcd}.
    \]
    Since $F/G$ has slope $=1/2$, the support of $F/G$ has dimension $>0$. So $G$ must has rank $1$. Thus $E/G$ is a rank $0$ object which contain $F/G$, so the support of $F/G$ also has dimension $>0$. Therefore, $E/G$ has slope $1/2$ which is a contradiction since $E$ is semistable.
\end{proof}
\begin{lemma}\label{quotientofss}
    Let $E$ be a $\sigma_0$-semistable object of rank $1$ of slope $>1/2$. Then any quotient of $E$ is also $\sigma_0$-semistable of slope $>1/2$. In particular, if the quotient has rank $0$ then it has $0$-dimensional support.
\end{lemma}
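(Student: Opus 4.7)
The plan is to adapt the strategy of Lemma~\ref{subobjectosss}: a putative destabilization of a quotient $Q$ of $E$ will lift to a destabilizing quotient of $E$, contradicting the semistability of $E$. I would begin by writing the short exact sequence $0\to K\to E\to Q\to 0$ in $\A_X$. Semistability of $E$ then forces $\arg Z(K)\leq\arg Z(E)\leq\arg Z(Q)$ by the seesaw principle applied to $Z(E)=Z(K)+Z(Q)$, so in particular the slope of $Q$ is at least the slope of $E$, which is $>1/2$.

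Next I would exploit the following dichotomy on the wall $\sigma_0$: since $\arg z_0=\arg z_1=\eta$, the slope of any nonzero object of $\A_X$ with nonzero central charge lies in the two-element set $\{1/2,\ \eta/\pi\}$. More concretely, a rank-$1$ object, or a rank-$0$ object with $\textup{ch}_2=0$, has slope $\eta/\pi$, whereas a rank-$0$ object with $\textup{ch}_2\neq 0$ has slope $1/2$. Combined with the previous step, the slope of $Q$ must be exactly $\eta/\pi$. For the rank-$0$ case, slope $\eta/\pi$ forces $\textup{ch}_2(Q)=0$; since every rank-$0$ object of $\A_X$ lies in $\textup{Coh}_{\leq 1}(X)[-1]$, this means $Q=F[-1]$ for some sheaf $F$ of zero-dimensional support, which gives the ``in particular'' part of the statement.

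For the semistability of $Q$ I would argue by contradiction. Suppose $0\to Q'\to Q\to Q''\to 0$ is a destabilizing sequence in $\A_X$, so $\arg Z(Q')>\arg Z(Q'')$. The slope dichotomy above forces $\arg Z(Q'')=\pi/2$ and $\arg Z(Q')=\eta$. Taking the preimage $\widetilde{Q}'$ of $Q'$ under $E\twoheadrightarrow Q$ yields a short exact sequence $0\to\widetilde{Q}'\to E\to Q''\to 0$ in $\A_X$. Semistability of $E$ then gives $\arg Z(\widetilde{Q}')\leq\arg Z(Q'')=\pi/2$, and the seesaw principle applied to $Z(E)=Z(\widetilde{Q}')+Z(Q'')$ forces $\arg Z(E)\leq\pi/2$, contradicting $\arg Z(E)=\eta>\pi/2$.

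The argument parallels Lemma~\ref{subobjectosss} almost verbatim and is essentially a slope chase; the only point to handle with some care is the slope dichotomy on the wall, which relies on the identification of rank-$0$ objects of $\A_X$ with $\textup{Coh}_{\leq 1}(X)[-1]$ so that their central charges are pinned to the two rays of argument $\pi/2$ and $\eta$. I expect this to be the only (mild) technical obstacle.
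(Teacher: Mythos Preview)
Your overall strategy is correct and matches the paper's: a destabilizing quotient of $Q$ composes with $E\twoheadrightarrow Q$ to give a destabilizing quotient of $E$. However, there is a genuine gap. You twice invoke the seesaw principle via the additivity $Z(E)=Z(K)+Z(Q)$ (and later $Z(E)=Z(\widetilde{Q}')+Z(Q'')$), but the weak stability function $Z$ of this paper is \emph{not} additive. By construction $Z(v)=Z_i([v])$ where $i$ is the smallest index with $v\in\Gamma_i$; for instance, if $K$ has rank $0$ and $Q$ has rank $1$ then $Z(E)=Z(Q)=z_1$ while $Z(K)\neq 0$, so $Z(E)\neq Z(K)+Z(Q)$. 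Thus neither $\arg Z(E)\leq\arg Z(Q)$ nor the final inequality $\arg Z(E)\leq\pi/2$ is justified as written.

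The repair uses only the slope dichotomy you already stated, together with a short rank count, in place of each seesaw step. For the first step: if $K$ has rank $1$ then $\arg Z(K)=\eta$ and semistability of $E$ gives $\arg Z(Q)\geq\eta>\pi/2$; if $K$ has rank $0$ then $Q$ has rank $1$, hence $\arg Z(Q)=\eta$ directly. For the final contradiction: once you know $\arg Z(Q'')=\pi/2$, the dichotomy forces $Q''$ to have rank $0$, so $\widetilde{Q}'\subset E$ has rank $1$ and therefore $\arg Z(\widetilde{Q}')=\eta$, which already contradicts $\arg Z(\widetilde{Q}')\leq\arg Z(Q'')=\pi/2$; there is no need to bring $\arg Z(E)$ back in. This is precisely how the paper argues: it splits on the rank of the subobject and observes that a slope-$1/2$ quotient of $E$ has rank-$1$ kernel, directly destabilizing $E$.
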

\begin{proof}
    Let $F$ be a subobject of $E$. If $F$ has rank $1$, then it has slope $>1/2$. By the semistability of $E$, the quotient $E/F$ must has slope $>1/2$. Hence support of $E/F$ is $0$-dimensional, which is $\sigma_0$-semistable. So we may assume $F$ has rank $0$. Now if there is an exact sequence destabilize $E/F$,
    \[
    \begin{tikzcd}
	0 & G & E/F & H & 0
	\arrow[from=1-1, to=1-2]
	\arrow[from=1-2, to=1-3]
	\arrow[from=1-3, to=1-4]
	\arrow[from=1-4, to=1-5]
    \end{tikzcd}.
    \]
    Then there is a surjective morphism $E\to E/F\to H$, where $H$ has slope $1/2$. Taking kernel of the morphism $E\to H$ then this $H$ also destabilize $E$ which is a contradiction.
\end{proof}
\begin{lemma}\label{directsumofwithpoint}
    The category of $\sigma_0$-semistable objects with slope $>1/2$ is closed under extension.
\end{lemma}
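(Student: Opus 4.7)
The plan is to verify the semistability of an extension $0\to A\to E\to B\to 0$ in $\mathcal{A}_X$ (with $A,B$ both $\sigma_0$-semistable of slope $>1/2$) by contradiction, building directly on Lemma~\ref{quotientofss}. A key preliminary observation is that, since $\sigma_0$ sits on the wall, the only phases attained by nonzero objects of $\mathcal{A}_X$ are $\eta\pi$ (for classes in $\Gamma_2\setminus\Gamma_1$ or in $\Gamma_0$) and $\pi/2$ (for classes in $\Gamma_1\setminus\Gamma_0$, i.e.\ rank $0$ with nonzero curve class). By hypothesis, neither $v(A)$ nor $v(B)$ lies in $\Gamma_1\setminus\Gamma_0$, and additivity of the class map shows the same for $v(E)$; hence $E$ already has slope $\eta>1/2$.

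Suppose $F\subset E$ destabilizes $E$, i.e.\ $\arg Z(F)>\arg Z(E/F)$. The only possibility among the two available phases is $\arg Z(F)=\eta\pi$ and $\arg Z(E/F)=\pi/2$, which forces $E/F$ to have rank $0$ and nonzero curve class; in particular $\mathrm{rk}(F)=\mathrm{rk}(E)$. Setting $F_A:=F\cap A$ and $F_B:=\mathrm{Im}(F\to B)$, the snake-lemma sequences
\[
0\to F_A\to F\to F_B\to 0,\qquad 0\to A/F_A\to E/F\to B/F_B\to 0
\]
are exact in $\mathcal{A}_X$, and rank additivity on the second sequence gives $\mathrm{rk}(A/F_A)=\mathrm{rk}(B/F_B)=0$.

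The main step is then to show that $A/F_A$ and $B/F_B$ are both $0$-dimensional shifted sheaves. If $A$ has rank $1$, this follows immediately from Lemma~\ref{quotientofss} applied to the rank-$0$ quotient $A/F_A$. If $A$ has rank $0$, the description $\mathcal{A}_X=\langle\mathcal{O}_X,\mathrm{Coh}_{\le 1}(X)[-1]\rangle_{\mathrm{ex}}$ together with $H^1(X,\mathcal{O}_X)=0$ forces $A=T_A[-1]$ for some $0$-dimensional sheaf $T_A$, because a slope-$>1/2$ rank-$0$ object already has $\beta=0$; any quotient of $A$ inside $\mathcal{A}_X$ is then again of the form $T[-1]$ with $T$ a $0$-dimensional sheaf, via the standard $t$-structure cohomology long exact sequence. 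The same dichotomy applies to $B/F_B$.

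Finally, since $\mathrm{Ext}^1_{\mathcal{A}_X}(T_B[-1],T_A[-1])\cong\mathrm{Ext}^1_{\mathrm{Coh}(X)}(T_B,T_A)$ remains $0$-dimensional, the subcategory of $0$-dimensional shifted sheaves is closed under extensions in $\mathcal{A}_X$, so $E/F$ itself is $0$-dimensional shifted. In particular $E/F$ has vanishing curve class and slope $\eta$, contradicting $\arg Z(E/F)=\pi/2$. Thus no destabilizing $F$ exists and $E$ is $\sigma_0$-semistable of slope $>1/2$. The main subtlety I anticipate is the clean characterization of rank-$0$ objects in $\mathcal{A}_X$ as shifted $0$-dimensional sheaves; once that is in place everything else is bookkeeping of ranks combined with Lemma~\ref{quotientofss}.
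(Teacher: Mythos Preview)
Your argument is correct and reaches the desired contradiction, but there is one small gap and the route differs from the paper's.

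\textbf{The gap.} The lemma places no rank restriction on the semistable objects, yet your case analysis for $A$ (and $B$) only treats rank $0$ and rank $1$. This is easily repaired, and in fact in a way that makes Lemma~\ref{quotientofss} unnecessary: whenever $\mathrm{rk}(A)\ge 1$, one has $\mathrm{rk}(F_A)=\mathrm{rk}(A)>0$ (since $\mathrm{rk}(A/F_A)=0$), hence $\mathrm{slope}(F_A)=\eta$; semistability of $A$ then forces $\mathrm{slope}(A/F_A)\ge\eta$, so $A/F_A$ is a $0$-dimensional shifted sheaf. Incidentally, the hypothesis $H^{1}(X,\mathcal{O}_X)=0$ is not needed for your identification of rank-$0$ objects of $\mathcal{A}_X$ with $\mathrm{Coh}_{\le 1}(X)[-1]$: rank additivity on the filtration generating $\mathcal{A}_X$ already forces every rank-$0$ object to lie in $\mathrm{Coh}_{\le 1}(X)[-1]$.

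\textbf{Comparison with the paper.} The paper gives the standard semistability-under-extensions argument and uses neither the specific structure of $\mathcal{A}_X$ nor Lemma~\ref{quotientofss}: if the extension admits a destabilizing quotient $Y$, one may take $Y$ semistable of slope $1/2$; then the composite $A\to E\to Y$ vanishes because $\mathrm{Hom}$ from a semistable object of slope $\eta$ to one of slope $1/2$ is zero, and this produces a surjection $B\twoheadrightarrow Y$ contradicting the semistability of $B$. This is shorter and works verbatim in any weak stability setting. Your approach instead exploits the concrete dichotomy of slopes available on the wall together with the explicit description of rank-$0$ objects in $\mathcal{A}_X$; it is more hands-on and perhaps more illustrative of the geometry of this particular heart, at the cost of being longer and less portable.
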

\begin{proof}
    Let $F$ be an extension of $\sigma_0$-semistable objects $E$ and $G$ in $\A_X$ satisfying
    \[
    \begin{tikzcd}
	0 & E & F & G & 0
	\arrow[from=1-1, to=1-2]
	\arrow[from=1-2, to=1-3]
	\arrow[from=1-3, to=1-4]
	\arrow[from=1-4, to=1-5]
    \end{tikzcd}.
    \]
    If there is an exact sequence destabilizes $F$, say
    \[
    \begin{tikzcd}
	0 & X & F & Y & 0
	\arrow[from=1-1, to=1-2]
	\arrow[from=1-2, to=1-3]
	\arrow[from=1-3, to=1-4]
	\arrow[from=1-4, to=1-5]
    \end{tikzcd}.
    \]
    where $X$ has slope $>1/2$ and $Y$ has slope $=1/2$. We may assume that $Y$ is semistable of slope $=1/2$. Then the composition $E\to F\to Y$ is zero. So $E\to F$ factor through $X$. Hence we obtain an exact sequence
    \[
    \begin{tikzcd}
	0 & X/E & G & Y & 0
	\arrow[from=1-1, to=1-2]
	\arrow[from=1-2, to=1-3]
	\arrow[from=1-3, to=1-4]
	\arrow[from=1-4, to=1-5]
    \end{tikzcd},
    \]
    which violates the semistability of $G$.
\end{proof}
Using the above lemmas, we now prove Theorem~\ref{ssgoodmodulispace}.
\begin{proof}[Proof of Theorem \ref{ssgoodmodulispace}]
    The result for $v\in\Gamma^0$ is well-known. We apply Proposition \ref{criterionab} on $\M^{ss,v}\subset\M_\A$ for $v\in\Gamma^1$. It suffices to show $(a)$ and $(b)$.
    \begin{enumerate}
        \item[(a)]($\Theta$-reductive) For any DVR $R$ with fraction field $K$ and residue field $\kappa$, for every $R$-point $E$ of $\M_\A$ and every $\mathbb{Z}$-graded filtration 
        \[
        0\subset \cdots\subset E_{i-1}\subset E_i\subset E_{i+1}\subset\cdots\subset E
        \]
        satisfying $E_i=0$ for $i\ll 0$ and $E_i=E$ for $i\gg0$, such that $E_i/E_{i+1}$ are $R$-point of $\M_\A$. If $E$ and $\textup{gr}(E_{\bullet}|_{K})$ are both in $\M^{ss,v}$. Then $\textup{gr}(E_\bullet|_\kappa)$ is also in $\M^{ss,v}$. 
        \begin{proof}
            First note that $(E_i/E_{i-1})|_K$ cannot have slope $1/2$ for any $i$, otherwise the projection $\textup{gr}(E_\bullet|_K)\to(E_i/E_{i-1})|_K$ would destabilize $\textup{gr}(E_\bullet|_K)$. By the openness of $\O bj^v(\A_X)$ for each $v$, the $(E_i/E_{i-1})|_\kappa$ cannot have slope $1/2$ for any $i$. Let $i$ be the unique index such that $(E_i/E_{i-1})|_\kappa$ has positive rank. Then $E_{i}|_\kappa$ has rank $1$ and hence has slope $>1/2$. By Lemma \ref{subobjectosss}, $E_i|_\kappa$ is semistable. Applying Lemma \ref{quotientofss}, we deduces that $(E_i/E_{i-1})|_\kappa$ is semistable. Finally, Lemma \ref{directsumofwithpoint} implies that $\textup{gr}(E_\bullet|_\kappa)$ is semistable.
        \end{proof}
        \item[(b)]($\mathbf{S}$-completeness) For any two $\kappa$-points $E$, $F$ of $\mathcal{M}^{ss,v}$, if there are $\mathbb{Z}$-graded filtrations in $\M_\A(\kappa)$
        \begin{align*}
              0\subset \cdots\subset E_{i-1}\subset E_i\subset E_{i+1}\subset\cdots\subset E\\
              F\supset \cdots\supset F^{i-1}\supset F^i\supset F^{i+1}\supset\cdots\supset 0
        \end{align*}
        satisfying $E_i=0$, $F^i=F$ for $i\ll0$ and $E_i=E$, $F^i=0$ for $i\gg0$ and $E_i/E_{i-1}\cong F^i/F^{i+1}$ for all $i$. Then $\textup{gr}(E_\bullet)$ is also in $\M^{ss,v}$.
        \begin{proof}
            Since $E$ is an object of rank $1$. There is a unique $i$ such that $E_i/E_{i-1}$ has rank $1$. Thus by Lemma \ref{directsumofwithpoint}, it suffices to show $E_j/E_{j-1}$ has $0$-dimensional support for all $j\neq i$ and $E_i/E_{i-1}$ is semistable. Using Lemma \ref{subobjectosss}, we see that $E_j$ is semistable of rank $1$ for all $j\geq i$ and $F_j$ is semistable of rank $1$ for all $j\leq i$. Now applying Lemma \ref{quotientofss} we get $E_j/E_{j-1}$ has $0$-dimensional support for all $j\neq i$ and $E_i/E_{i-1}$ is a rank $1$ semistable object.
        \end{proof}
    \end{enumerate}
    The existence of a good moduli space for $\M^{ss,v}$ then follows from Proposition \ref{criterionab}.
\end{proof}

\begin{corollary}\label{maincoro}
   The stack $\M^{ss}$ is Nisnevich locally fundamental, i.e., there exists a Nisnevich cover by stacks of the form $[U/\mathrm{GL}_n]$, where $U$ is an affine variety acted by $\mathrm{GL}_n$.
\end{corollary}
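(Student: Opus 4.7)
The plan is to apply the \'etale-local structure theorem of Alper--Hall--Rydh~\cite{Alper2019TheL}: an algebraic stack with affine stabilizers admitting a good moduli space has, \'etale-locally at each closed point $x$, the form $[\Spec A_x/G_x]$ where $G_x$ is the (linearly reductive) stabilizer, via a pointed map inducing an isomorphism of residual gerbes---in particular a Nisnevich neighborhood of $x$. One then upgrades the presentation to a $\mathrm{GL}_N$-quotient by the induced-space construction $U=\mathrm{GL}_N\times^{G_x}\Spec A$ after embedding $G_x\hookrightarrow\mathrm{GL}_N$; this does not change the stack but replaces the structure group by $\mathrm{GL}_N$.

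First, I verify the hypotheses of~\cite{Alper2019TheL}. The stack $\M^{ss}=\coprod_{v\in\Gamma^0\cup\Gamma^1}\M^{ss,v}$ is locally of finite type, and each component admits a separated good moduli space by Theorem~\ref{ssgoodmodulispace}. It inherits affine diagonal from the ambient moduli stack $\M$ of universally gluable perfect complexes~\cite{Max2006}, hence has affine stabilizers. Finally, the stabilizer at a closed point of $\M^{ss,v}$ is the automorphism group of the corresponding polystable object $\bigoplus_i E_i^{\oplus m_i}$, where the $E_i$ are pairwise non-isomorphic $\sigma_0$-stable objects of the same slope. Since $\Hom(E_i,E_j)=0$ for $i\neq j$ and $\Hom(E_i,E_i)=\mathbb{C}$ by $\sigma_0$-stability, this automorphism group equals $\prod_i\mathrm{GL}_{m_i}$, which is reductive.

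Applying~\cite{Alper2019TheL} at every closed point $x$ then produces a Nisnevich neighborhood $[\Spec A_x/G_x]\to\M^{ss}$. Choosing an embedding $G_x\hookrightarrow\mathrm{GL}_{N_x}$ and forming $U_x:=\mathrm{GL}_{N_x}\times^{G_x}\Spec A_x$, which is an affine variety because $G_x$ is reductive (so $\mathrm{GL}_{N_x}/G_x$ is affine by Matsushima, and $U_x\to\mathrm{GL}_{N_x}/G_x$ is an affine bundle), gives an isomorphism $[U_x/\mathrm{GL}_{N_x}]\cong[\Spec A_x/G_x]$. The resulting maps $[U_x/\mathrm{GL}_{N_x}]\to\M^{ss}$, indexed by closed points of the good moduli space, form a Nisnevich cover by stacks of the required fundamental form. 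The only nontrivial ingredient is Theorem~\ref{ssgoodmodulispace}; modulo that, the rest is a formal consequence of~\cite{Alper2019TheL} combined with the induced-space trick, exactly the setup exploited in~\cite{Bu2024AMI}.
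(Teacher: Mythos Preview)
Your proposal is correct and follows essentially the same approach as the paper: both deduce the result directly from the Alper--Hall--Rydh local structure theorem~\cite[Theorem~6.1]{Alper2019TheL}, using Theorem~\ref{ssgoodmodulispace} as the only substantive input. The paper's proof is a one-line citation, whereas you spell out the verification of the hypotheses (affine stabilizers, reductivity at closed points) and make the passage from $[\Spec A/G_x]$ to $[U/\mathrm{GL}_N]$ explicit via the induced-space construction; none of this adds a genuinely new idea, and indeed the reductivity of stabilizers at closed points is automatic once a good moduli space exists, so your explicit computation of $\mathrm{Aut}\cong\prod_i\mathrm{GL}_{m_i}$ is not strictly needed.
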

\begin{proof}
    This follows directly from the local structure theorem for algebraic stacks admitting a good moduli space \cite[Theorem 6.1]{Alper2019TheL}.
\end{proof}

% !TeX root = ../main.tex

\section{Shifted symplectic structure and orientation}
Compared to numerical Donaldson--Thomas invariants, defining motivic or cohomological Donaldson--Thomas invariants requires more information. Therefore, we need to introduce shifted symplectic structures and orientations. 
\subsection{Shifted symplectic structure}
We begin by briefly recalling some definitions and key existence results from the theory of shifted symplectic structures, as introduced in \cite{PTVV2013} via derived algebraic geometry; see also \cite{ParkIntroShiftedSymp} for a nice introduction.
\begin{definition}
    Given a derived stack $\dX$. We denote the space of $p$-forms, closed $p$-forms by
    \[
    \A^p(\dX,n),    \A^{p,cl}(\dX,n)\in\S
    \]
\end{definition}
\begin{remark}
    In contrast to the classical setting, where closed forms are defined as a subset of all forms (namely, those annihilated by the de Rham differential), in the derived framework a closed $p$-form of degree $n$ is not merely a form satisfying a condition. Rather, it consists of a $p$-form together with a specified "closing structure", a homotopy-theoretic datum encoding the vanishing of the differential in a coherent way. There is a forgetting morphism  $\A^{p,\textup{cl}}(\dX,n)\to\A^{p}(\dX,n)$ that forget the closing structure.
\end{remark}
The space of closed $p$-forms is not easy to describe. However, for $p$-forms we have the following proposition.
\begin{proposition}\label{form=map}
    Given a derived algebraic stack $\dX$, and its cotangent complex $\L_\dX$. We have an equivalence of spaces
    \[
    \A^p(\dX,n)\cong\textup{Map}(\mathcal{O}_{\dX},\wedge^p\L_{\dX}[n])\in\S.
    \]
\end{proposition}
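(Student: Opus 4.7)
The plan is to reduce the statement to the case of a derived affine scheme, where it is almost tautological, and then extend to a general derived algebraic stack by smooth descent. Both sides of the equivalence are functorial in $\dX$, so once the affine case is established the general case should follow formally.

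First, for a derived affine scheme $\dX=\dspec A$ with $A$ a cdga, the PTVV construction defines $\A^p(\dX,n)$ as the space obtained by applying Dold--Kan to the complex $\wedge_A^p L_A[n]$, where $L_A$ is the $A$-module cotangent complex. On the mapping-space side, $\O_\dX$ corresponds to $A$ itself, which is the monoidal unit in the $\infty$-category of $A$-modules, so
\[
\textup{Map}(\O_\dX,\wedge^p\L_\dX[n]) \;\simeq\; \textup{Map}_{A\text{-Mod}}(A,\wedge_A^p L_A[n]) \;\simeq\; \Omega^\infty\bigl(\wedge_A^p L_A[n]\bigr).
\]
Under Dold--Kan the right-hand side is exactly $\A^p(\dX,n)$, which establishes the equivalence in the affine case.

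For a general derived algebraic stack I would then invoke smooth descent on both sides. The functor $\dX\mapsto\A^p(\dX,n)$ is defined in \cite{PTVV2013} by right Kan extension from derived affine schemes, so it automatically satisfies smooth descent. The functor $\dX\mapsto\textup{Map}(\O_\dX,\wedge^p\L_\dX[n])$ satisfies smooth descent because the $\infty$-category of quasi-coherent complexes on a derived algebraic stack does, and because the cotangent complex $\L_\dX$ is defined precisely so that its restriction along any smooth atlas reproduces the affine cotangent complex (and exterior powers preserve this compatibility). Since every derived algebraic stack admits a smooth atlas by derived affine schemes, the affine-local equivalence promotes to an equivalence on $\dX$.

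The main technical point, rather than a genuine obstacle, is the compatibility of the two descent structures: one must verify that the Dold--Kan identification used affine-locally is natural in the underlying cdga, so that it glues along the restriction maps of a smooth hypercover of $\dX$. Once this naturality is in hand the statement follows at once, and indeed this compatibility is essentially built into the PTVV formalism through their definition of the space of forms on a derived stack as a homotopy limit over its derived affine site.
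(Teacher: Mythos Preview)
The paper does not actually prove this proposition: it is stated without proof as a background fact from \cite{PTVV2013} (this is essentially \cite[Proposition~1.14]{PTVV2013}). Your sketch is the standard argument used there --- establish the equivalence tautologically on derived affines via Dold--Kan and then globalize by smooth descent / right Kan extension --- and it is correct in outline.

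One small refinement: on the right-hand side you should be a bit more careful about what ``$\textup{Map}(\O_\dX,\wedge^p\L_\dX[n])$ satisfies smooth descent'' means. The clean way to phrase it is that for a smooth map $f:U\to\dX$ one has a canonical equivalence $f^*\wedge^p\L_\dX\simeq\wedge^p f^*\L_\dX$, and then the descent datum for $\L_\dX$ (as a quasi-coherent complex) together with descent for $\textup{QCoh}$ gives descent for the mapping space. You implicitly use this, but it is worth making explicit that $\wedge^p$ commutes with pullback on perfect complexes, since that is what makes the restriction maps on the two sides match. With that said, there is no genuine gap.
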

Given a morphism of derived stacks $f:\dX\to\dY$ there is a pullback map of (closed) $n$-shifted $p$-form
\[
f^\star:\A^p(\dY,n)\to\A^p(\dX,n), f^\star:\A^{p,cl}(\dY,n)\to\A^{p,cl}(\dX,n)
\]
By Proposition~\ref{form=map}, any $2$-form $\omega$ of degree $n$ on a derived algebraic stack $\dX$ corresponds to a morphism of quasi-coherent complex
\[
\O_\dX\to\wedge^2\L_{\dX}[n]
\]
which gives rise to a morphism from tangent complex to $n$-shifted shifted cotangent complex
\[
\Theta_\omega:\mathbb{T}_\dX\to\L_{\dX}[n].
\]
\begin{definition}
    Let $\dX$ be a derived algebraic stack. An $n$-shifted symplectic structure on $\dX$ is a closed $2$-form $\omega\in\A^{p,cl}(\dX,n)$ of degree $n$ such that the underlying $2$-form induces an equivalence
    \[
    \Theta_\omega:\mathbb{T}_\dX\overset{\sim}{\to}\L_\dX[n]
    \]
\end{definition}
A derived algebraic stack $\dX$ with an $n$-shifted symplectic structure $\omega_\dX$ is called an $n$-shifted symplectic stack and we denote it by $(\dX,\omega_\dX)$. Given a pair of $n$-shifted symplectic stacks $(\dX,\omega_\dX), (\dY,\omega_\dY)$, there is an induced $n$-shifted symplectic structure on $\dX\times\dY$ given by 
\[
\omega_{\dX}\boxplus\omega_\dY=\textup{pr}_1^\star\omega_\dX+\textup{pr}_2^\star\omega_\dY
\]
where $\textup{pr}_1:\dX\times\dY\to\dX$, $\textup{pr}_2:\dX\times\dY\to\dY$ are the projections.
We now state the main existence theorem for shifted symplectic structures.
\begin{theorem}[{\cite[Theorem 2.5]{PTVV2013}}]\label{-1shiftedstronperf}
    Let $X$ be a smooth projective Calabi--Yau $n$-fold and let $\dY$ be a $d$-shifted symplectic derived Artin stack. Then the derived mapping stack
    \[
    \underline{\textup{dMap}}(X,\dY):(\textup{dSch}/\mathbb{C})\to\S, T\mapsto\textup{Map}(X\times T,\dY)
    \]
    carries a $(d-n)$-shifted symplectic structure from $\Y$. 
    
    In particular, the derived moduli stack $\textup{Perf}(X)$ of perfect complexes on a Calabi--Yau threefold $X$ carries a $(-1)$-shifted symplectic structure.
\end{theorem}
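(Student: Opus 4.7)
The plan is to follow the original PTVV strategy: pull back the shifted symplectic form along the universal evaluation map, integrate out $X$ using the Calabi--Yau orientation, and verify non-degeneracy by Serre duality.

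First I would construct the universal evaluation morphism
\[
\textup{ev}: X \times \underline{\textup{dMap}}(X,\dY) \longrightarrow \dY
\]
arising from the adjunction that defines the derived mapping stack. Pulling back $\omega_\dY \in \A^{2,\textup{cl}}(\dY, d)$ along $\textup{ev}$ produces a closed $2$-form $\textup{ev}^\star \omega_\dY$ of degree $d$ on the product $X \times \underline{\textup{dMap}}(X,\dY)$. The crucial construction is then the integration map that eliminates the $X$-factor. The Calabi--Yau hypothesis $\omega_X \cong \O_X$, together with the properness of $X$ and Serre duality, provides a fundamental class $[X] : R\Gamma(X, \O_X) \to \mathbb{C}[-n]$ which is non-degenerate in the appropriate sense. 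This orientation datum furnishes an integration map
\[
\int_X : \A^{p,\textup{cl}}(X \times \dM, m) \longrightarrow \A^{p,\textup{cl}}(\dM, m-n),
\]
where $\dM := \underline{\textup{dMap}}(X,\dY)$; applying it to $\textup{ev}^\star \omega_\dY$ yields a closed $2$-form of degree $d-n$ on $\dM$, the candidate shifted symplectic structure.

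Next I would verify non-degeneracy. Using the standard computation of the tangent complex of a mapping stack, at a $T$-point $f : X \times T \to \dY$ one has
\[
\mathbb{T}_{\dM, f} \;\cong\; \pi_{T,*}\bigl(\textup{ev}^\star \mathbb{T}_\dY\bigr),
\]
where $\pi_T : X \times T \to T$ is the projection. The non-degeneracy of $\omega_\dY$ provides an equivalence $\mathbb{T}_\dY \simeq \L_\dY[d]$, hence
\[
\mathbb{T}_{\dM, f} \;\cong\; \pi_{T,*}\bigl(\textup{ev}^\star \L_\dY[d]\bigr).
\]
Applying Serre duality on the Calabi--Yau $n$-fold $X$ (which uses $\omega_X \cong \O_X$) identifies this with $\pi_{T,*}(\textup{ev}^\star \mathbb{T}_\dY)^\vee[d-n] \cong \L_{\dM,f}[d-n]$, and a diagram chase through the construction shows this equivalence is precisely the map induced by the integrated closed form.

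The main technical obstacle is constructing $\int_X$ at the level of \emph{closed} forms rather than merely underlying forms. Integration of underlying $p$-forms reduces essentially to Serre duality, but the closing structure is a piece of higher-coherence data that must be transported along the pushforward in a way compatible with the de~Rham differential on negative cyclic complexes. Handling this rigorously requires the formalism of $\O$-compact and $\O$-oriented derived stacks developed in PTVV Section~2: properness of $X$ supplies $\O$-compactness, and the Calabi--Yau trivialization supplies an $\O$-orientation of dimension $n$. Granting this abstract machinery, the argument above goes through. The ``in particular'' statement for $\textup{Perf}(X)$ then follows by taking $\dY = \textup{Perf}$ equipped with its canonical $2$-shifted symplectic structure coming from the trace pairing on $R\mathcal{H}om$, so that $\underline{\textup{dMap}}(X, \textup{Perf}) \simeq \textup{Perf}(X)$ inherits a $(2-3) = -1$-shifted symplectic structure.
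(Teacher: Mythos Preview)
The paper does not prove this statement at all: it is quoted verbatim as \cite[Theorem~2.5]{PTVV2013} and used as a black box, with no argument given. Your proposal is a faithful outline of the original PTVV proof (evaluation, integration along the $\O$-orientation furnished by the Calabi--Yau structure, non-degeneracy via Serre duality, and the specialization $\dY=\mathrm{Perf}$ with its $2$-shifted symplectic form), so there is nothing in the paper to compare it against beyond noting that you have supplied what the paper deliberately omits.
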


\subsection{d-critical structures}
The notion of a $d$-critical structure on an algebraic stack was introduced in \cite{BenBassatBravBussiJoyce2015}. They defined a canonical \'{e}tale sheaf $\mathcal{S}_X$ for each algebraic space $X$ and showed that $\mathcal{S}_X$ can be decompose into $\mathcal{S}_X=\mathcal{S}_{X}^0\oplus\mathbb{C}$. For a morphism $f:X\to Y$ there is a pullback map $f^\star:f^{-1}\mathcal{S}_{Y}^0\to\mathcal{S}_X^0$.

\begin{definition}[{\cite[Definition 2.5]{JoyceDcritical}}]
    Let $X$ be an algebraic space and $s\in\Gamma(X,\mathcal{S}_X^0)$ a section. 
    \begin{enumerate}
        \item An \'etale d-critical chart for $(X,s)$ is an \'etale morphism $\eta:R\to X$, a smooth scheme $U$, a closed embedding $i:R\hookrightarrow U$ with ideal sheaf $I$ of $i^{-1}\mathcal{O}_U$ defining $R$ and a morphism $f:U\to\mathbb{A}^1$ on $U$ with $f|_{i(R)^{red}}=0$, $i(R)=\textup{Crit}(f)$ and $f+I^2=s|_R$. We denote a d-critical chart as above by $(R,\eta,U,f,i)$.
        \item The section $s$ is called a d-critical structure on $X$ if for each point $x\in X$ there exists a d-critical chart $(R,\eta,U,f,i)$ such that the image of $\eta$ contains $x$.
    \end{enumerate}
\end{definition}
Given a smooth morphism $f:X\to Y$ of algebraic space and a d-critical structure $s\in\Gamma(Y,\mathcal{S}_Y^0)$. There is a pullback d-critical structure $f^\star s$ on $X$ \cite[Proposition 2.8]{JoyceDcritical}. Using this notion we may extend the definition of d-critical structure on algebraic stacks.

\begin{definition}
    Given an algebraic stack $\X$. The sheaf $\mathcal{S}_\X^0$ is defined to be the assignment 
    \[
    T\mapsto \Gamma(T,\mathcal{S}_T^0)
    \]
    in the lisse-\'etale topos on $\X$. A d-critical structure on $\X$ is a section $s\in\Gamma(\X,\mathcal{S}_\X^0)$ such that $s|_T\in\Gamma(T,\mathcal{S}_T^0)$ is a d-critical structure for each smooth morphism $T\to\X$.
\end{definition}
For an algebraic stack $\mathcal{X}$ equipped with a d-critical structure $s \in \Gamma(\mathcal{X}, \mathcal{S}_\mathcal{X}^0)$, we refer to the pair $(\mathcal{X}, s)$ as a d-critical stack. A morphism of d-critical stacks $f:(X,s)\to(Y,t)$ is a morphism $f$ such that $f^\star t=s$.

Let $\dX$ be a $(-1)$-shifted symplectic stack with classical truncation $\X=\dX_{\textup{cl}}$ an algebraic stack. By \cite[Theorem 3.18]{BenBassatBravBussiJoyce2015}, there exists an induced d-critical structure on the classical truncation $\X$. More precisely, there is a truncation functor
\[
F:
\left\{
\begin{array}{l}
\infty\textup{-category of } (-1)\textup{-shifted} \\
\textup{symplectic stacks } (\dX, \omega_\dX)
\end{array}
\right\}
\longrightarrow
\left\{
\begin{array}{l}
\textup{2-category of } \\
\textup{d-critical stacks } (\X, s)
\end{array}
\right\}.
\]
\subsection{Orientation}
\begin{definition}
    Given a $(-1)$-shifted symplectic stack $(\dX,\omega_\dX)$ over $\mathbb{C}$. An orientation of $\dX$ is a line bundle $K_\dX^{\frac{1}{2}}\to\dX$, together with an isomorphism $o_\dX:(K_{\dX}^{\frac{1}{2}})^{\otimes2}\cong K_\dX$, where $K_\dX$ is defined to be the determinant line bundle of the cotangent complex of $\dX$, which is called the canonical bundle of $\dX$. We called an $(-1)$-shifted symplectic stack with an orientation an oriented $(-1)$-shifted symplectic stack and denote it by $(\dX,\omega_\dX,o_\dX)$.
\end{definition}
Given a pair of oriented $(-1)$-shifted symplectic stacks $(\dX,\omega_\dX,o_\dX), (\dY,\omega_\dY,o_\dY)$, there is an induced orientation $\dX\times\dY$ given by the isomorphism
\[
o_{\dX\times\dY}:(K_{\dX}^{\frac{1}{2}}\boxtimes K_{\dY}^{\frac{1}{2}})^{\otimes2}\cong K_\dX\boxtimes K_\dY\cong K_{\dX\times\dY}.
\]
\begin{remark}
In \cite{BenBassatBravBussiJoyce2015}, the notion of the canonical line bundle \( K_\mathcal{X} \) for a \( d \)-critical stack \( (\mathcal{X}, s) \) is also defined. It is further shown that if \( (\mathcal{X}, s) \) arises as the classical truncation of a \( (-1) \)-shifted symplectic stack \( \dX \), there is a canonical isomorphism
\[
K_{\dX}|_{\mathcal{X}^{\mathrm{red}}} \cong K_\mathcal{X}|_{\mathcal{X}^{\mathrm{red}}}
\]
on the reduced underlying stack. For simplicity, we write \( K_\mathcal{X} \) to denote this restriction, even though it is technically a line bundle on \( \mathcal{X}^{\mathrm{red}} \).
\end{remark}
\begin{definition}
    The orientation on a d-critical stacks $(\X,s)$ is defined to be a line bundle $K_\X^{\frac{1}{2}}$ on $\X^{\textup{red}}$ together with an isomorphism $o:(K_{\X}^{\frac{1}{2}})^{\otimes2}\cong K_\X$. An oriented d-critical stack is a d-critical stack $(\X,s)$ together with an orientation $o$, we denote it by $(\X,s,o)$.
\end{definition}
\begin{lemma}[{\cite[Lemma 2.58]{JoyceDcritical}}]\label{inducedorientation}
    Given a smooth morphism $g:\Y\to\X$ of d-critical stacks. If $g$ is compatible with the d-critical structures. Then an orientation $K_\X^{\frac{1}{2}}$ of $\X$ induces an orientation of $\Y$ given by 
    \[
    K_\Y^{\frac{1}{2}}=g^*(K^{\frac{1}{2}}_\X)\otimes\textup{det}(\mathbb{L}_{\Y/\X})|_{\Y^{\textup{red}}}, g^\star o:(K_\Y^{\frac{1}{2}})^{\otimes 2}\cong K_\Y
    \]
\end{lemma}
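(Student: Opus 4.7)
My approach is to reduce the global statement to a comparison on local d-critical charts and then check compatibility with the gluing data that defines $K_\X$.

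First, I would recall Joyce's local description of the canonical bundle of a d-critical stack. For any d-critical chart $(R,\eta,U,f,i)$ of $(\X,s)$, the line bundle $K_\X$ restricted to $R^{\textup{red}}$ is canonically identified with $i^*(K_U^{\otimes 2})|_{R^{\textup{red}}}$, and different charts are glued via the cocycle isomorphisms $\Xi$ from \cite{BenBassatBravBussiJoyce2015, JoyceDcritical}. Since $g:\Y\to\X$ is smooth and compatible with d-critical structures, one can enlarge a chart $(R,\eta,U,f,i)$ of $\X$ to a d-critical chart $(R',\eta',V,f\circ h,i')$ of $\Y$, where $h:V\to U$ is a smooth morphism (étale-locally a factor projection) and $R'=R\times_\X \Y$. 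In this local model the pulled-back d-critical structure equals $h^\star$ of the original one, and $K_\Y|_{(R')^{\textup{red}}} \cong (i')^*K_V^{\otimes 2}|_{(R')^{\textup{red}}}$.

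Next, I would use the smoothness of $h:V\to U$ to obtain the standard exact triangle $h^*\L_U\to\L_V\to\L_{V/U}$, which yields an isomorphism
\[
K_V \;\cong\; h^*K_U \otimes \det(\L_{V/U}).
\]
Squaring gives $K_V^{\otimes 2}\cong h^*K_U^{\otimes 2}\otimes \det(\L_{V/U})^{\otimes 2}$. Since $\L_{V/U}$ restricted to $(R')^{\textup{red}}$ agrees with $\L_{\Y/\X}|_{(R')^{\textup{red}}}$ via the chart, the proposed line bundle $K_\Y^{1/2}:=g^*K_\X^{1/2}\otimes\det(\L_{\Y/\X})|_{\Y^{\textup{red}}}$ restricts locally to $i'^*(h^*K_U)\otimes\det(\L_{V/U})|_{(R')^{\textup{red}}}$, whose square is canonically identified with $K_V^{\otimes 2}|_{(R')^{\textup{red}}}$, i.e.\ with $K_\Y|_{(R')^{\textup{red}}}$. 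This produces a local isomorphism $g^\star o$ on each chart.

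Finally, I would verify that these local isomorphisms patch together into a global one. This reduces to checking that the cocycle $\Xi$ describing the transition isomorphisms of $K_\X$ on overlaps of charts pulls back under $g$ to the cocycle for $K_\Y$, modulo the contribution of $\det(\L_{\Y/\X})$. Because a change of chart from $(U_1,f_1)$ to $(U_2,f_2)$ is governed by a smooth extension $U_{12}$, and our pulled-back charts $V_i=U_i\times_\X\Y$ sit in the analogous smooth extension $V_{12}=U_{12}\times_\X\Y$, the determinants of relative cotangent complexes are compatible with these extensions up to canonical squares, which is precisely what enters the cocycle condition. The main obstacle is this last verification: one must compare Joyce's isomorphism $\Xi_{\alpha\beta}$ for different charts with its counterpart upstairs on $\Y$, tracking the additional factor $\det(\L_{\Y/\X})$ through the embedding-extension moves of \cite[Prop.~2.30]{JoyceDcritical}. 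Once this cocycle compatibility is confirmed, $(K_\Y^{1/2},g^\star o)$ is a well-defined orientation of $(\Y,g^\star s)$.
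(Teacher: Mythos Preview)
The paper does not give its own proof of this lemma: it is stated with a citation to \cite[Lemma~2.58]{JoyceDcritical} and used as a black box, with no argument supplied. So there is nothing in the paper to compare your proposal against.

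That said, your sketch follows the natural strategy one would expect in Joyce's framework: pass to d-critical charts, use smoothness of $g$ to lift charts on $\X$ to charts on $\Y$, invoke the exact triangle for the relative cotangent complex to relate $K_V$ to $h^*K_U\otimes\det(\L_{V/U})$, and then check compatibility with the gluing cocycles $\Xi$. This is essentially the shape of the argument in the cited reference. The place you correctly flag as the main obstacle --- tracking the factor $\det(\L_{\Y/\X})$ through the cocycle comparison --- is indeed where the work lies, and your outline does not carry it out in detail; but as a proposal it points in the right direction.
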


\subsection{Results on the moduli stack of objects on a Calabi--Yau threefold}
In this section we focus on the moduli stack of perfect complex on a Calabi--Yau threefold.  Let $\dX$ be the derived moduli stack of perfect complexes on a Calabi--Yau 3-fold equipped with a $(-1)$-shifted symplectic structure $\omega_\dX$ given by Corollary \ref{-1shiftedstronperf}. Let $\Phi_2:\dX\times\dX\to\dX$ be the morphism corresponded to the direct sum of perfect complexes on $X$.

The existence of an orientation on the moduli stack of perfect complexes on a Calabi--Yau threefold is ensured by the following theorem. 
\begin{theorem}[{\cite[Theorem 3.6]{JoyceUpmeier2021}}]\label{orientationdata}
    There is an orientation $o:(K_\dX^{\frac{1}{2}})^{\otimes 2}\cong K_\dX$ on $\dX$ such that it is compatible with direct sums in the following sense: Let $\Phi_2:\dX\times\dX\to\dX$ be the morphism corresponded to the direct sum of perfect complexes on $X$.
    Then there is an isomorphism
    \begin{align*}
        &\lambda:\Phi_2^\star o\cong o\boxtimes o.
    \end{align*}
\end{theorem}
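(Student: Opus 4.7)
The plan is to construct the orientation in two stages: first establish the existence of a square root $K_\dX^{1/2}$ of the canonical bundle via an obstruction-theoretic argument that exploits the Calabi--Yau condition, and then promote this choice so that it becomes compatible with direct sums, using the functoriality of the universal construction. The main geometric input is Serre duality on $X$, which both produces the $(-1)$-shifted symplectic form of Theorem~\ref{-1shiftedstronperf} and forces a self-duality on the complex whose determinant is $K_\dX$.

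First I would make $K_\dX$ concrete. Writing $\mathcal{E}$ for the universal perfect complex on $\dX\times X$ and $p:\dX\times X\to\dX$ for the projection, the tangent complex is $\mathbb{T}_\dX\simeq\textup{R}p_{*}\textup{R}\mathcal{H}om(\mathcal{E},\mathcal{E})[1]$, so $K_\dX$ is, up to a shift, the determinant line of $\textup{R}p_{*}\textup{R}\mathcal{H}om(\mathcal{E},\mathcal{E})$. Serre duality on $X$ yields a canonical self-duality $\textup{R}\mathcal{H}om(\mathcal{E},\mathcal{E})\simeq\textup{R}\mathcal{H}om(\mathcal{E},\mathcal{E})^{\vee}[-3]$, which is the same symmetry that produces $\omega_\dX$. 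Passing to determinants forces $c_1(K_\dX)$ to be $2$-divisible in $H^2(\dX;\mathbb{Z})$, so the obstruction to producing a global square root is purely a class in $H^2(\dX;\mathbb{Z}/2)$.

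Second, and this is the technical heart of the argument, I would show that this $\mathbb{Z}/2$-obstruction vanishes. The strategy is to pass to the underlying topological stack of $\dX$ and identify the obstruction with a spin-type obstruction on a classifying space built from the topological $K$-theory of $X$. One then uses the vanishing of the relevant real bordism class for a Calabi--Yau threefold, which crucially uses $c_1(X)=0$ together with the $SU(3)$-structure on the tangent bundle of $X$. This is where I expect the main difficulty: the argument requires genuine topological input specific to CY3, rather than formal properties of $(-1)$-shifted symplectic stacks, and must be carried out uniformly on the entire moduli stack of perfect complexes rather than on a single numerical component.

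Third, I would promote the chosen square root to one compatible with direct sums. The decomposition $\Phi_2^{*}\mathcal{E}\simeq\textup{pr}_1^{*}\mathcal{E}\oplus\textup{pr}_2^{*}\mathcal{E}$ gives a canonical isomorphism
\[
\Phi_2^{*}K_\dX\;\cong\;(K_\dX\boxtimes K_\dX)\otimes L^{\otimes 2},\qquad L:=\det\textup{R}p_{12,*}\textup{R}\mathcal{H}om(\textup{pr}_1^{*}\mathcal{E},\textup{pr}_2^{*}\mathcal{E}),
\]
in which the last factor is manifestly a square. Any square root therefore pulls back to $K_\dX^{1/2}\boxtimes K_\dX^{1/2}\otimes L$ up to a locally constant sign on each connected component of $\dX\times\dX$. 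These signs can be rigidified on the component of the zero complex $[0]\in\dX$ and propagated to all components via a K\"unneth-type argument exploiting associativity and commutativity of direct sums, yielding the desired isomorphism $\lambda$. Once existence of a single square root is in hand, this final step is essentially bookkeeping.
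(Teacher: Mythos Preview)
The paper does not prove this statement at all: Theorem~\ref{orientationdata} is quoted directly from \cite[Theorem~3.6]{JoyceUpmeier2021} and used as a black box, with no argument given. So there is no ``paper's own proof'' to compare against; the result is imported wholesale from Joyce--Upmeier.

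Your sketch is a reasonable high-level outline of how the Joyce--Upmeier argument actually proceeds --- compute $K_\dX$ as the determinant of $\textup{R}p_*\textup{R}\mathcal{H}om(\mathcal{E},\mathcal{E})$, observe that Serre duality makes $c_1$ two-divisible so the obstruction to a square root lives in $H^2(-;\mathbb{Z}/2)$, kill that obstruction by a topological/spin-type argument special to Calabi--Yau threefolds, and then normalize across components to achieve compatibility with $\Phi_2$. That said, you should be aware that the second step is substantially harder than your paragraph suggests: Joyce--Upmeier's proof passes through the differential-geometric moduli space of connections, uses excision and index-theoretic arguments, and relies on nontrivial algebraic-topological computations (in particular on the homotopy type of the relevant mapping spaces) rather than a single bordism vanishing. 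Your description of the third step is also slightly optimistic: ensuring that the signs can be coherently rigidified across all components so that $\lambda$ is genuinely associative and symmetric is not pure bookkeeping and is part of what \cite{JoyceUpmeier2021} actually establishes. For the purposes of the present paper, however, none of this matters --- the correct move is simply to cite the result, as the paper does.
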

\begin{theorem}[{\cite[Corollary 8.19]{KinjoParkSafronov2024}}]\label{sss}
    The morphism $\Phi_2:\dX\times\dX\to\dX$ is compatible with $(-1)$-shifted symplectic structures, i.e. we have an equivalence
    \[
    \Phi_2^\star\omega_\dX\cong\omega_\dX\boxplus\omega_\dX
    \]
\end{theorem}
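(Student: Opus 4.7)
The plan is to reduce the desired compatibility of $\Phi_2$ with $\omega_\dX$ to the analogous compatibility on the universal derived moduli stack $\mathrm{Perf}$ of perfect complexes (which carries a canonical $2$-shifted symplectic structure $\omega_{\mathrm{Perf}}$), and then exploit the explicit construction of $\omega_{\mathrm{Perf}}$ via the Chern character. The starting observation is that $\dX \cong \underline{\mathrm{dMap}}(X, \mathrm{Perf})$ and, by Theorem \ref{-1shiftedstronperf}, the form $\omega_\dX$ is obtained from $\omega_{\mathrm{Perf}}$ through AKSZ integration against the Calabi--Yau fundamental class on $X$. Moreover, the direct sum morphism $\Phi_2 : \dX \times \dX \to \dX$ is induced by post-composition with the direct sum $\oplus : \mathrm{Perf} \times \mathrm{Perf} \to \mathrm{Perf}$ on the target, via the identification $\underline{\mathrm{dMap}}(X, \mathrm{Perf} \times \mathrm{Perf}) \cong \dX \times \dX$.

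By naturality of the AKSZ construction in the target, once an equivalence
\[
\oplus^\star \omega_{\mathrm{Perf}} \cong \omega_{\mathrm{Perf}} \boxplus \omega_{\mathrm{Perf}}
\]
is produced in $\A^{2,\mathrm{cl}}(\mathrm{Perf} \times \mathrm{Perf}, 2)$, applying $\int_X \mathrm{ev}^\star$ yields the sought-after equivalence $\Phi_2^\star \omega_\dX \cong \omega_\dX \boxplus \omega_\dX$ in $\A^{2,\mathrm{cl}}(\dX \times \dX, -1)$. To produce the former, recall that $\omega_{\mathrm{Perf}}$ at a point $E \in \mathrm{Perf}$ is the trace pairing on $\mathrm{RHom}(E,E)[1]$, with closing structure given by the second component of the universal Chern character valued in negative cyclic homology. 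At the level of underlying $2$-forms the required identity is the elementary statement that the trace of a block-diagonal endomorphism of $E \oplus F$ equals the sum of the traces of its diagonal blocks: the derivative of $\oplus$ at $(E,F)$ embeds $\mathrm{RHom}(E,E)[1] \oplus \mathrm{RHom}(F,F)[1]$ into $\mathrm{RHom}(E \oplus F, E \oplus F)[1]$ as the diagonal blocks, on which the trace pairing decouples.

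The principal obstacle is promoting this pointwise additivity to a coherent equivalence at the level of \emph{closed} shifted forms, since $\A^{p,\mathrm{cl}}$ encodes genuine higher-homotopical data (the closing structure) that is absent from $\A^p$. I would handle this by working with the $E_\infty$-ring structure on algebraic $K$-theory and the induced trace map to negative cyclic homology, both of which are strictly compatible with direct sums; thus the universal Chern character $\mathrm{ch}_2$ of $\mathcal{E} \oplus \mathcal{F}$ is equivalent, as a closed $2$-shifted $2$-form, to $\mathrm{ch}_2(\mathcal{E}) \boxplus \mathrm{ch}_2(\mathcal{F})$, and this equivalence is natural in the universal perfect complex. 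Once this refinement is in place, the remainder of the argument is formal: AKSZ integration is a functor of $\infty$-categories that intertwines the monoidal structures on source and target, so the pullback identity descends to closed $(-1)$-shifted $2$-forms on $\dX \times \dX$, giving the claim.
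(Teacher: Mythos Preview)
The paper does not supply a proof of this statement at all: Theorem~\ref{sss} is simply quoted from \cite[Corollary~8.19]{KinjoParkSafronov2024} and used as a black box. There is therefore no proof in the paper to compare your proposal against.

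That said, your outline is broadly the right shape and matches the strategy in the cited reference: reduce to the additivity of the $2$-shifted symplectic form on $\mathrm{Perf}$ under direct sum, then transport along the AKSZ/mapping-stack construction. Your identification of the delicate point is also correct---the issue is not the underlying $2$-form (where trace additivity is immediate) but the closing structure. Your proposed fix via the $E_\infty$-multiplicativity of the Chern character into negative cyclic homology is the right idea, though as written it is a sketch rather than a proof: one must check that the specific closing datum used in \cite{PTVV2013} for $\omega_{\mathrm{Perf}}$ is the one arising from $\mathrm{ch}_2$ in $\mathrm{HN}$, and that the AKSZ transgression is genuinely monoidal at the level of closed forms (not merely underlying forms). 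The reference \cite{KinjoParkSafronov2024} handles this by working systematically with the commutative monoid structure on $\mathrm{Perf}$ and showing that $\omega_{\mathrm{Perf}}$ is compatible with it in the appropriate $\infty$-categorical sense; your sketch would benefit from invoking that framework explicitly rather than arguing pointwise.
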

The following proposition gives open substack of $\dX_{\textup{cl}}$ a natural derived structure.
\begin{proposition}[{\cite[Proposition 2.1]{SchurgToenVezzosi2015}}]
    Let \(\dX\) be a derived stack and let \(\X := \dX_{\mathrm{cl}}\) denote its classical truncation.  
The truncation functor induces a one-to-one correspondence
\[
\phi_{\mathrm{cl}}\;:\;
\bigl\{\textup{Zariski-open derived substacks of }\dX\bigr\}
\;\xrightarrow{\;\sim\;}\;
\bigl\{\textup{Zariski-open substacks of }\X\bigr\}.
\]
In particular, every Zariski-open substack of \(\X\) carries a unique derived enhancement coming from \(\dX\).

\end{proposition}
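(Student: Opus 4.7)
The plan is to exhibit an explicit inverse to the truncation functor $\phi_{\mathrm{cl}}$. The forward direction, sending a Zariski-open derived substack $\dU \hookrightarrow \dX$ to its classical truncation $\dU_{\mathrm{cl}} \hookrightarrow \X$, is already specified by the statement; I must produce an inverse and check that both compositions are the identity.

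The backbone of the argument is the standard fact that for any derived algebraic stack $\dX$, the canonical closed immersion $\iota : \X = \dX_{\mathrm{cl}} \hookrightarrow \dX$ induces a homeomorphism of underlying topological spaces $|\X| \xrightarrow{\sim} |\dX|$. This is because the defining ideal of $\iota$ is supported in strictly positive homotopical degree and is therefore invisible to the Zariski topology. Consequently every open subset of $|\dX|$ arises uniquely from an open subset of $|\X|$, and it suffices to produce, for each classical Zariski-open $U \hookrightarrow \X$, a Zariski-open derived substack of $\dX$ with underlying set $|U|$ whose classical truncation equals $U$.

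Locally I would argue as follows. Smooth-locally one reduces to the affine case $\dX \simeq \mathrm{Spec}\, A$ with $A$ a cdga and $\X \simeq \mathrm{Spec}\, \pi_0 A$. For a principal Zariski-open $\{f \neq 0\} \subset \X$ with $f \in \pi_0 A$, lift $f$ to an element $\tilde f$ of degree zero in $A$ and set $\dU := \mathrm{Spec}\, A[\tilde f^{-1}]$. The resulting derived open is independent of the choice of lift up to canonical equivalence, since the derived localization depends only on the image of $\tilde f$ in $\pi_0 A$. Its classical truncation is $\mathrm{Spec}\,(\pi_0 A)[f^{-1}]$, which is the original open $\{f \neq 0\}$. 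A general Zariski-open $U \subset \X$ is then recovered by gluing along a basis of principal opens, and the corresponding derived opens glue consistently by the same uniqueness.

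The globalization step proceeds via smooth descent: any derived algebraic stack admits a smooth atlas by derived affine schemes, and the affine construction above is functorial and compatible with smooth base change, so it descends to a well-defined Zariski-open derived substack $\dU \hookrightarrow \dX$ attached to each Zariski-open $U \hookrightarrow \X$. By construction $\dU_{\mathrm{cl}} = U$, which gives the surjectivity of $\phi_{\mathrm{cl}}$. Injectivity follows because two Zariski-open derived substacks with identical classical truncations share the same support in $|\dX|$, and their structure sheaves are both the restriction of $\mathcal{O}_\dX$ to that support, hence they agree. The main technical obstacle is the descent step: one must verify that the local constructions are canonically identified on overlaps and that the ambiguity in the choice of lifts produces no higher cocycle obstructions. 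This rests on the general principle that open immersions of derived stacks are formally étale and hence determined by their classical truncations up to contractible choices, which is precisely the content of \cite{SchurgToenVezzosi2015}.
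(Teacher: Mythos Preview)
The paper does not give its own proof of this proposition: it is stated with a citation to \cite[Proposition 2.1]{SchurgToenVezzosi2015} and used as a black box, so there is nothing in the paper to compare your argument against.

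That said, your sketch is the standard one and is essentially correct. The key input is indeed that the closed immersion $\iota:\X\hookrightarrow\dX$ induces a homeomorphism on underlying topological spaces, so that open substacks of $\dX$ are determined set-theoretically by opens of $\X$; the remaining point is that a Zariski-open derived substack is by definition a monomorphism $\dU\hookrightarrow\dX$ which is formally \'etale and induces an open immersion on truncations, hence its structure sheaf is forced to be the restriction of $\mathcal{O}_\dX$. Your affine-local description via $A[\tilde f^{-1}]$ and the observation that the localization depends only on the image of $\tilde f$ in $\pi_0 A$ is exactly right. The descent step you flag is not a genuine obstacle: because open immersions are $(-1)$-truncated maps (monomorphisms) in the $\infty$-category of derived stacks, the space of lifts is either empty or contractible, so there are no higher coherence issues to resolve. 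If you want to tighten the write-up, you could simply invoke that Zariski-open derived substacks of $\dX$ are in bijection with open subsets of $|\dX|\cong|\X|$, which are in turn in bijection with Zariski-open substacks of $\X$; this bypasses the explicit local construction entirely.
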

By this proposition, all of our moduli stacks carries a natural derived structure from $\dX$. Thus an oriented $(-1)$-shifted symplectic structure on $\dX$ induced an oriented $(-1)$-shifted symplectic structure on $\M_\A$ that is compatible with direct sum.
\begin{corollary}
    There is an orientation on $\M_\A$ induced from $\mathfrak{X}$ that is compatible with direct sum and a d-critical structure that is compatible with the direct sum morphism $\M_\A\times\M_\A\to\M_\A$.
\end{corollary}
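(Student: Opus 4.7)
The plan is to transport every relevant structure from $\dX$ to the open substack $\M_\A$ by pullback, and then to apply the truncation functor to obtain a d-critical structure. To set this up, I would first observe that $\M$, the classical moduli stack of universally gluable perfect complexes, is an open substack of $\dX_{\mathrm{cl}}$, and that by Proposition \ref{MAisStheta} $\M_\A\subset\M$ is itself open; hence $\M_\A$ is open in $\dX_{\mathrm{cl}}$. By the proposition of Sch\"urg--To\"en--Vezzosi recalled just above, $\M_\A$ carries a unique derived enhancement $\dM_\A$ realized as an open derived substack of $\dX$. Let $j:\dM_\A\hookrightarrow\dX$ denote the open immersion. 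Pulling back along $j$ produces a $(-1)$-shifted symplectic form $j^\star\omega_\dX$ on $\dM_\A$ and an orientation $(j^\star K_\dX^{1/2},j^\star o)$; these genuinely define an oriented $(-1)$-shifted symplectic structure because the cotangent complex restricts correctly along an open immersion, so the non-degeneracy map $\mathbb{T}\to\mathbb{L}[-1]$ remains an equivalence.

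Next I would transfer the direct-sum compatibility. Because $\A$ is the heart of a bounded $t$-structure, it is closed under direct sums, so the classical direct sum map restricts to $\Phi_2^{\mathrm{cl}}:\M_\A\times\M_\A\to\M_\A$. Uniqueness of the derived enhancement coming from the preceding proposition promotes this to a morphism of derived stacks $\Phi_2^\A:\dM_\A\times\dM_\A\to\dM_\A$ fitting into a commutative square with $\Phi_2$. Restricting the equivalences of Theorems \ref{sss} and \ref{orientationdata} along this square then yields
\[
(\Phi_2^\A)^\star(j^\star\omega_\dX)\cong j^\star\omega_\dX\boxplus j^\star\omega_\dX,\qquad (\Phi_2^\A)^\star(j^\star o)\cong j^\star o\boxtimes j^\star o,
\]
which is exactly the desired compatibility of the oriented $(-1)$-shifted symplectic structure on $\dM_\A$ with direct sums.

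Finally, I would apply the Ben-Bassat--Brav--Bussi--Joyce truncation functor $F$ to $(\dM_\A,j^\star\omega_\dX)$ to obtain a d-critical section $s_{\M_\A}\in\Gamma(\M_\A,\mathcal{S}_{\M_\A}^0)$. Since $F$ is a functor of (higher) categories, the equivalence $(\Phi_2^\A)^\star(j^\star\omega_\dX)\cong j^\star\omega_\dX\boxplus j^\star\omega_\dX$ is sent to an identification $(\Phi_2^{\mathrm{cl}})^\star s_{\M_\A}=s_{\M_\A}\boxplus s_{\M_\A}$ of d-critical structures on $\M_\A\times\M_\A$, which is precisely the compatibility with the classical direct sum morphism. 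The bulk of the proof is bookkeeping, and the main potential obstacle is verifying that the passage from $(-1)$-shifted symplectic stacks to d-critical stacks respects external sums so that the square above descends through $F$; this is built into the functoriality of the truncation in \cite[Theorem 3.18]{BenBassatBravBussiJoyce2015}, so no substantive new input beyond careful unwinding of the cited statements should be required.
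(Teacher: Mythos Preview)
Your proposal is correct and follows exactly the approach the paper intends: the paper states this corollary without a detailed proof, treating it as an immediate consequence of the Sch\"urg--To\"en--Vezzosi proposition together with Theorems~\ref{orientationdata} and~\ref{sss}, and your write-up is a careful unwinding of precisely that implicit argument.
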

The above corollary implies that there is also an induced orientation and a d-critical structure on the open substack $\M^{ss}\subset\M_\A$.

% !TeX root = ../main.tex

\section{Motivic invariants and Integral identity}
In this section we recall motivic invariants for algebraic stacks, motivic Behrend functions and Bu's motivic integral identity for (-1)-shifted symplectic stacks \cite{Bu2024AMI}.

\subsection{Rings of motives}
We begin by reviewing the fundamental notion of motive rings over an algebraic stack, following \cite{Bu2024AMI,IntrinsicDTII}; for the original reference, see \cite{Joyce2007}.
We first define the Grothendieck ring of varieties over an algebraic stack $\X$. Recall that, by convention, the stack $\X$ is locally of finite type with affine stabilizers.
\begin{definition}
    The Grothendieck ring of varieties over $\X$ is defined to be the abelian group
    \[
    K_{\textup{var}}(\X)=\hat{\bigoplus_{Z\to\X}}\mathbb{Q}\cdot[Z\to\X]/\sim,
    \]
    where we sum over all isomorphism classes of morphisms $Z\to\X$, where $Z$ are varieties and $\hat{\bigoplus}$ means we allow infinite sum $\sum_Z n_Z[Z\to\X]$ but for any quasi-compact open substack $\mathcal{U}\subset\X$, there are only finitely many $Z$ such that $n_Z\neq0$ and $Z\times_\X\mathcal{U}\neq\varnothing.$ The relation is generated by $[Z]\sim [Z']+[Z\setminus Z']$ for closed subschemes $Z'\subset Z$.
\end{definition}
The Grothendieck ring of varieties carries a ring structure by taking fibre product over $\X$ on generators. It is also a commutative $K_\textup{var}(\mathbb{C})$-algebra, where $K_\textup{var}(\mathbb{C}):=K_\textup{var}(\textup{Spec}(\mathbb{C}))$, with the action induced by the homomorphism 
\[
K_\textup{var}(\mathbb{C})\to K_\textup{var}(\X),[Z]\mapsto[Z\times\X\to\X].
\] 
We denote $\L=[\mathbb{A}^1]\in K_\textup{var}(\mathbb{C})$.
\begin{definition}
    We consider the following localization
    \begin{align*}
    \mathbb{M}(\X)&=K_\textup{var}(\X)\hat{\otimes}_{K_\textup{var}(\mathbb{C})}\left(K_\textup{var}(\mathbb{C})[\L^{-1}]/(\L-1)\textup{-torsion}\right)\\
        \hat{\mathbb{M}}(\X)&=K_\textup{var}(\X)\hat{\otimes}_{K_\textup{var}(\mathbb{C})}K_\textup{var}(\mathbb{C})[\L^{-1},(\L^k-1)^{-1}],
    \end{align*}
where we invert $\L^k-1$ for all $k\geq1$ and $\hat{\otimes}$ means we allow infinite sum $\sum_{Z\to\X}[Z\to\X]\otimes n_Z$, but for any quasi-compact open substack $\mathcal{U}\subset\X$, there are only finitely many $Z$ such a that $n_Z\neq0$ and $Z\times_\X\mathcal{U}\neq\varnothing.$
\end{definition}
For a finite type morphism $\Y\to\X$ of algebraic stack, where $\Y$ has affine stabilizers  one can associate a class $[\Y\to\X]\in\hat{\mathbb{M}}(\X)$, which agrees with the usual one when $\Y$ is just a variety. The construction is as follows: for such $\Y$ there is a stratification of $\Y$ by locally closed substack $\Y=\coprod_i\Y_i$ such that $\Y_i=[U_i/\textup{GL}(n_i)]$, where $U_i$ is a quasi affine scheme with a $\textup{GL}(n_i)$ action, thus we define
\[
[\Y]:=\sum_{i}\frac{1}{[\textup{GL}(n_i)]}\cdot[U_i]\in\hat{\mathbb{M}}(\X),
\]
where $[\textup{GL}(n_i)]=\prod_{k=0}^{n_i-1}(\L^{n_i}-\L^k)$. It is easy to see that this definition does not depend on the choice of the stratification by passing through a common refinement.
\begin{definition}[Pullback]
    For a morphism $f:\Y\to\X$ of algebraic stacks with affine stabilizers. We define a pullback map on generators
    \[
    f^*:\hat{\mathbb{M}}(\X)\to\hat{\mathbb{M}}(\Y),\qquad[Z\to\X]\mapsto[Z\times_\X\Y\to\Y],
    \]
    which is a $\hat{\mathbb{M}}(\mathbb{C})$-algebra homomorphism.
\end{definition}
\begin{definition}[Pushforward]
    Let $f:\Y\to\X$ be a finite type morphism, then we may also define an $\hat{\mathbb{M}}(\mathbb{C})$-algebra homomorphism
    \[
    f_!:\hat{\mathbb{M}}(\Y)\to\hat{\mathbb{M}}(\X),\qquad[Z\to\Y]\mapsto[Z\to\Y\to\X].
    \]
    In particular, if $\X$ itself is of finite type over $\textup{Spec}(\mathbb{C})$, we define the motivic integration to be the pushforward alone structure morphism $\X\to\textup{Spec}(\mathbb{C})$, and denoted by
    \[
    \int_\X:\hat{\mathbb{M}}(\X)\to\hat{\mathbb{M}}(\mathbb{C}).
    \]
\end{definition}
The base change theorem and the projection formula hold in this setting, and they can be checked directly on generators.
\begin{proposition}[{\cite[Theorem 3.5]{Joyce2007}}]Suppose we have a pullback diagram of algebraic stacks with affine stabilizers 
\[
\begin{tikzcd}
	\Y' & \X' \\
	\Y & \X
	\arrow["{f'}", from=1-1, to=1-2]
	\arrow["{g'}", from=1-1, to=2-1]
	\arrow["\lrcorner"{anchor=center, pos=0.125}, draw=none, from=1-1, to=2-2]
	\arrow["g", from=1-2, to=2-2]
	\arrow["f", from=2-1, to=2-2]
\end{tikzcd}
\]
where $f$ is of finite type. 
Then 
\begin{enumerate}
    \item[(a)](Base change formula) We have $g^*\circ f_!=f_!'\circ g^{'*}$,
    \item[(b)](Projection formula)  For any $a\in\hat{\mathbb{M}}(\Y)$ and $b\in\hat{\mathbb{M}}(\X)$ we have
    \[
    f_!(a\cdot f^*(b))=f_!(a)\cdot b.
    \]
\end{enumerate}
\end{proposition}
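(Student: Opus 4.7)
The plan is to verify both identities on the generators of the Grothendieck rings and then extend by $\mathbb{Q}$-linearity, compatibility with the scissor and localization relations, and convergence of the infinite sums permitted by $\hat{\bigoplus}$. The sole algebraic input is a pair of universal-property statements for the cartesian square $\Y'\cong\Y\times_\X\X'$: for any $Z\to\Y$ one has $Z\times_\Y\Y'\cong Z\times_\X\X'$, and for any $W\to\Y$ and $V\to\X$ one has $W\times_\Y(V\times_\X\Y)\cong W\times_\X V$ via the composite $W\to\Y\to\X$.

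For base change I would start with a generator $[Z\to\Y]$ in which $Z$ is a variety. Then $g^{*}f_![Z\to\Y]=g^{*}[Z\to\X]=[Z\times_\X\X'\to\X']$, while $f'_!\,g'^{*}[Z\to\Y]=f'_![Z\times_\Y\Y'\to\Y']=[Z\times_\Y\Y'\to\X']$, and these two agree by the first isomorphism above. For the projection formula, with generators $a=[W\to\Y]$ and $b=[V\to\X]$, the product $a\cdot f^{*}b=[W\times_\Y(V\times_\X\Y)\to\Y]$ collapses under the second isomorphism to $[W\times_\X V\to\Y]$, so that $f_!(a\cdot f^{*}b)=[W\times_\X V\to\X]$, which is exactly $f_!(a)\cdot b=[W\to\X]\cdot[V\to\X]$.

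What remains is to lift these identities from variety generators to the general classes $[\Y\to\X]=\sum_i[U_i]/[\textup{GL}(n_i)]\in\hat{\mathbb{M}}(\X)$ defined via a stratification $\Y=\coprod_i[U_i/\textup{GL}(n_i)]$. The strategy is to choose a compatible stratification on each side and use that $f^{*}$, $f_!$, and the ring product all descend to the strata; stratification-independence is then the common-refinement argument of \cite{Joyce2007}. Compatibility with inverting $\L$ and $\L^k-1$ is formal. For the infinite sums one observes that finite-type-ness of $f$ forces $f_!$ to preserve local finiteness, while arbitrariness of $g$ poses no problem for $g^{*}$ because the preimage of a quasi-compact open under a morphism of stacks is quasi-compact.

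The main step that requires real care is the bookkeeping with the normalizing factors $1/[\textup{GL}(n_i)]$ during base change: one must check that pulling back a stratification of $\Y$ along $g:\X'\to\X$ produces a stratification of $\Y'=\Y\times_\X\X'$ whose quotient presentations match up with the $\textup{GL}(n_i)$-coefficients already assigned on the $\Y$ side. Once this is in place, everything reduces to the elementary fibre product identities stated at the outset, and both formulas follow.
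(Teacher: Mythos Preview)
Your approach is exactly what the paper does: it states only that ``they can be checked directly on generators'' and cites \cite{Joyce2007}, and you have carried out precisely this verification with the expected fibre-product identities. One minor slip: in your last paragraph the well-definedness of $g^{*}$ on infinite sums requires that the \emph{image} of a quasi-compact open in $\X'$ be contained in a quasi-compact open of $\X$ (which holds under the paper's standing quasi-separated, locally finite type hypotheses), not that preimages of quasi-compacts are quasi-compact.
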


\subsection{Motives with monodromic action}
Let $\hat{\mu}=\lim\mu_n$ be the projective limit of the groups $\mu_n$ for roots of unity. We consider the ring $\hat{\mathbb{M}}^\textup{mon}(\X)$ of motives with monodromic action on $\X$ in the sense of \cite{Bu2024AMI}. For a variety $Z$, a good $\hat{\mu}$-action is a $\hat{\mu}$-action on $Z$ that factors through a $\mu_n$-action for some $n$  and such that each orbit is contained in an open affine subscheme.
\begin{definition}
The monodromic Grothendieck ring of varieties of $\X$ is defined to be the abelian group
\[
K_\textup{var}^\textup{mon}(\X)=\hat{\bigoplus_{Z\to\X}}\mathbb{Q}\cdot[Z\to\X]^{\hat{\mu}}/\sim,
\]
where we run through all morphisms $Z\to\X$ with $Z$ a variety with a good $\hat{\mu}$-action that is compatible with the trivial $\hat{\mu}$-action on $\X$.
\end{definition}
The ring $K_\textup{var}^\textup{mon}(\X)$ carries a multiplicative structure different from the one induced by fiber products. This product was defined in \cite[Definition 5.3]{BenBassatBravBussiJoyce2015} and is denoted there by $\odot$. Nevertheless, there is a natural ring homomorphism 
\[
\iota:K_\textup{var}(\X)\to K_\textup{var}^\textup{mon}(\X)
\] 
given by equipping the varieties with trivial $\hat{\mu}$-action. Moreover, there is an element 
\[
\L^{\frac{1}{2}}:=1-[\mu_2]^{\hat{\mu}}\in K_\textup{var}^\textup{mon}(\mathbb{C})
\] 
with the nontrivial $\mu_2$-action. It satisfies $(\L^{\frac{1}{2}})^2=\L$. 
\begin{definition}
The monodromic ring of motives on $\X$ is defined to be the localization
\begin{align*}
    \mathbb{M}^\textup{mon}(\X)&=K_\textup{var}^\textup{mon}(\X)\hat{\otimes}_{K_\textup{var}(\mathbb{C})}\left(K_\textup{var}(\mathbb{C})[\L^{-1}]/(\L-1)\textup{-torsion}\right),\\
        \hat{\mathbb{M}}^\textup{mon}(\X)&=K_\textup{var}^\textup{mon}(\X)\hat{\otimes}_{K_\textup{var}(\mathbb{C})}K_\textup{var}(\mathbb{C})[\L^{-1},(\L^k-1)^{-1}]/\approx,
\end{align*}
where the relation $\approx$ is defined in \cite[Definition 5.5]{BenBassatBravBussiJoyce2015}.
\end{definition}
Now we define the motives of double covers.
\begin{definition}
    Let $\mathcal{P}\to\X$ be a principal $\mu_2$-bundle, there is a class
    \[
    \Upsilon(\mathcal{P})=\L^{-\frac{1}{2}}([\X]-[\mathcal{P}]^{\hat{\mu}})\in\hat{\mathbb{M}}^\textup{mon}(\X),
    \]
    where $\hat{\mu}$ acts on $\mathcal{P}$ via $\mu_2$. See \cite[Definitions 5.5, 5.13]{BenBassatBravBussiJoyce2015} for details.
\end{definition}
Note that $\Upsilon$ commutes with pullback and satisfies the relation
\[
\Upsilon(P\otimes_{\mathbb{Z}/2\mathbb{Z}}Q)=\Upsilon(P)\cdot\Upsilon(Q),
\]
for principal $\mathbb{Z}/2\mathbb{Z}$-bundles $P$ and $Q$. 
\subsection{Motivic Behrend Functions}
The Behrend function $\nu_X\colon X\to\mathbb{Z}$ plays a pivotal role in the categorification of Donaldson--Thomas (DT) theory. In his work \cite{Behrend2009DTMicrolocal}, Behrend proved that if a scheme $X$ is equipped with a symmetric obstruction theory, then the degree of its virtual fundamental class can be expressed as the weighted Euler characteristic
\[
\int_{[X]^{\mathrm{vir}}}1 \;=\; \chi\bigl(X,\nu_X\bigr):=\sum_{n\in\mathbb{Z}}n\cdot\chi(\nu_X^{-1}(n)).
\]
From the perspective of DT theory, this result upgrades the theory from counting with a single number to counting with a constructible function: the Behrend function itself already constitutes a first-level categorification of DT invariants. Consequently, in the motivic and cohomological refinements of DT theory, the guiding principle is to further categorify the Behrend function, replacing $\nu_X$ by perverse sheaves, mixed Hodge modules, or motivic classes so that the resulting invariants capture increasingly deep geometric information. We now explain how this categorification is implemented in the motivic setting.

Let $(X,s,o)$ be an oriented d-critical scheme over $\mathbb{C}$. Following \cite[Definition 2.5.3]{Bu2024AMI} and \cite[Theorem 5.7]{BenBassatBravBussiJoyce2015} we can define the motivic Behrend function $\nu_{X,s,o}^\textup{mot}$ based on \cite[Theorem 5.10]{BussiJoyceMeinhardt2019} .
\begin{definition}\label{motivicBF}
    Let $(X,s,o)$ be an oriented d-critical scheme. Its motivic Behrend function $\nu_X^\textup{mot}\in\hat{\mathbb{M}}^\textup{mon}(X)$ is defined uniquely by the following property:

    \begin{itemize}
        \item For any critical chart $i:\textup{Crit}(f)\hookrightarrow X$ where $f:U\to\mathbb{A}^1$ is a regular function on a smooth scheme $U$, there is an identity
    \[
    i^*\nu_X^\textup{mot}=-\L^{-\frac{\textup{dim}U}{2}}\cdot\Phi_f([U])\cdot\Upsilon(i^*(K_X^{\frac{1}{2}})\otimes K^{-1}_U|_{\textup{Crit}(f)^\textup{red
    }}).
    \]
    Here $\Phi_f$ is the motivic vanishing cycle and we identify principal $\mu_2$-bundles with line bundles which square to trivial
    \end{itemize} 
\end{definition}
Roughly speaking, the motivic Behrend function is defined locally via vanishing cycles together with local isomorphism data determined by the chosen orientation.

Note that the motivic Behrend function depends on the orientation and the d-critical structure. We will simply denote it by $\nu_X^\textup{mot}$ if there is no ambiguity. The definition can be extended to algebraic stacks with orientation and d-critical structure. Note that for any smooth morphism $f:Y\to\X$, there is an induced d-critical structure and an induced orientation on $Y$ by Lemma \ref{inducedorientation}.
\begin{theorem}[{\cite[Theorem 2.5.4]{Bu2024AMI}}]
        Let \((\mathcal{X},s,o)\) be an oriented d-critical stack that is Nisnevich-locally a quotient stack.  
Then there exists a unique class
\[
\nu_{\mathcal{X}}^{\mathrm{mot}}
=
\nu_{\mathcal{X},s,o}^{\mathrm{mot}}
\;\in\;
\widehat{\mathbb{M}}^{\mathrm{mon}}(\mathcal{X}),
\]
called the motivic Behrend function of \(\mathcal{X}\), characterized by the following property:  
for any variety \(Y\) and any smooth morphism
\(f:Y\to\mathcal{X}\) of relative dimension \(d\), we have
\[
f^{*}\bigl(\nu_{\mathcal{X}}^{\mathrm{mot}}\bigr)
=\;
\mathbb{L}^{d/2}\,\nu_{Y}^{\mathrm{mot}}
\;\in\;
\widehat{\mathbb{M}}^{\mathrm{mon}}(Y),
\]
where $Y$ is equipped with the induced oriented d-critical structure. 
\end{theorem}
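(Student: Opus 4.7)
The plan is to construct $\nu_{\X}^{\mathrm{mot}}$ by smooth descent from the scheme-level construction of Definition~\ref{motivicBF}, using the Nisnevich-local fundamental structure to supply a smooth atlas by a scheme, then verify the characterizing compatibility for arbitrary smooth maps $Y\to\X$. The crucial input I would isolate first is a \emph{scheme-level} smooth pullback formula: for any smooth morphism $g:Y'\to Y$ of relative dimension $e$ between oriented d-critical schemes, with $Y'$ carrying the induced oriented d-critical structure (Lemma~\ref{inducedorientation}), one has
\[
g^{*}\nu_{Y}^{\mathrm{mot}} \;=\; \L^{e/2}\,\nu_{Y'}^{\mathrm{mot}} \quad\in\;\widehat{\mathbb{M}}^{\mathrm{mon}}(Y').
\]
This is extracted directly from Definition~\ref{motivicBF}: shrinking to a common critical chart $i:\mathrm{Crit}(f)\hookrightarrow Y$ and pulling back along $g$, one gets a chart for $Y'$ in which both sides of the asserted identity reduce to the same expression, by the smooth base change properties of the motivic vanishing cycle $\Phi_{f}$ and of the class $\Upsilon$ applied to pulled-back square roots of the canonical bundle; this is the content of the Bussi--Joyce--Meinhardt package already cited in Definition~\ref{motivicBF}.

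For uniqueness, suppose $\nu_{1},\nu_{2}\in\widehat{\mathbb{M}}^{\mathrm{mon}}(\X)$ both satisfy the stated compatibility. Using the Nisnevich-local fundamental structure of $\X$, I pick a smooth surjection $p:U\to\X$ from a variety (first take a Nisnevich cover by fundamental stacks $[V_{i}/\mathrm{GL}_{n_{i}}]$, then postcompose with the tautological atlases $V_{i}\to[V_{i}/\mathrm{GL}_{n_{i}}]$). By hypothesis $p^{*}\nu_{1}=\L^{d/2}\nu_{U}^{\mathrm{mot}}=p^{*}\nu_{2}$, where $d=\dim p$, and smooth descent for $\widehat{\mathbb{M}}^{\mathrm{mon}}$ (injectivity of $p^{*}$ for smooth surjections onto stacks with affine stabilizers, built into the construction of these rings via fundamental stratifications) forces $\nu_{1}=\nu_{2}$. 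For existence, consider the candidate class $\alpha:=\L^{d/2}\nu_{U}^{\mathrm{mot}}\in\widehat{\mathbb{M}}^{\mathrm{mon}}(U)$; writing $V:=U\times_{\X}U$ with projections $q_{1},q_{2}:V\to U$, each smooth of relative dimension $d$, the scheme-level compatibility applied to the $q_{i}$ gives
\[
q_{i}^{*}\alpha \;=\; \L^{d/2}\cdot\L^{d/2}\,\nu_{V}^{\mathrm{mot}} \;=\; \L^{d}\,\nu_{V}^{\mathrm{mot}},
\]
so $q_{1}^{*}\alpha=q_{2}^{*}\alpha$, and smooth descent produces a unique $\nu_{\X}^{\mathrm{mot}}$ with $p^{*}\nu_{\X}^{\mathrm{mot}}=\alpha$.

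Finally, I would verify the characterization for an arbitrary smooth $f:Y\to\X$ of relative dimension $e$ with $Y$ a variety. Forming $W:=Y\times_{\X}U$ with projections $g:W\to Y$ (smooth of relative dimension $d$) and $h:W\to U$ (smooth of relative dimension $e$), the scheme-level formula gives
\[
g^{*}f^{*}\nu_{\X}^{\mathrm{mot}} \;=\; h^{*}p^{*}\nu_{\X}^{\mathrm{mot}} \;=\; h^{*}\alpha \;=\; \L^{d/2}\,h^{*}\nu_{U}^{\mathrm{mot}} \;=\; \L^{(d+e)/2}\,\nu_{W}^{\mathrm{mot}},
\]
and identically $g^{*}(\L^{e/2}\nu_{Y}^{\mathrm{mot}})=\L^{(d+e)/2}\nu_{W}^{\mathrm{mot}}$; injectivity of $g^{*}$ on $\widehat{\mathbb{M}}^{\mathrm{mon}}$ yields $f^{*}\nu_{\X}^{\mathrm{mot}}=\L^{e/2}\nu_{Y}^{\mathrm{mot}}$. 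The main obstacle is genuinely the scheme-level smooth pullback formula: once the interaction between smooth base change of critical charts, motivic vanishing cycles, and the orientation-dependent factor $\Upsilon\bigl(i^{*}K_{Y}^{1/2}\otimes K_{U}^{-1}\bigr)$ is pinned down (so that the various $\L^{(\dim U)/2}$ normalizations conspire to produce exactly $\L^{e/2}$), the descent and verification steps are formal.
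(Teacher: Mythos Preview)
The paper does not supply its own proof of this theorem: it is quoted verbatim as \cite[Theorem 2.5.4]{Bu2024AMI} and used as a black box. So there is nothing in the paper to compare your argument against.

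That said, your outline is the standard one and is essentially correct. One point to tighten: you invoke ``smooth descent for $\widehat{\mathbb{M}}^{\mathrm{mon}}$'' to manufacture $\nu_{\X}^{\mathrm{mot}}$ from the class $\alpha$ on $U$, but full faithfully-flat or smooth descent for these motivic rings is not what is actually available. What one has is Nisnevich descent together with an explicit inverse to $p^{*}$ on a fundamental stack $[U/\mathrm{GL}_n]$, namely $[\mathrm{GL}_n]^{-1}p_{!}$. The construction (visible in the paper's proof of Theorem~\ref{TStheorem}) is therefore: on each fundamental piece set
\[
\nu_{[U/\mathrm{GL}_n]}^{\mathrm{mot}} \;:=\; [\mathrm{GL}_n]^{-1}\,p_{!}\bigl(\L^{n^{2}/2}\,\nu_{U}^{\mathrm{mot}}\bigr),
\]
check directly that $p^{*}$ of this equals $\L^{n^{2}/2}\nu_{U}^{\mathrm{mot}}$, and then glue over a Nisnevich cover. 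Your cocycle check $q_{1}^{*}\alpha=q_{2}^{*}\alpha$ is exactly what makes this formula independent of the presentation, so the content is the same; just be careful not to assert a descent property for $\widehat{\mathbb{M}}^{\mathrm{mon}}$ stronger than Nisnevich.
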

For an oriented $(-1)$-shifted symplectic stack $(\dX,\omega_\dX,o)$, we write $\nu_{\dX}^\textup{mot}:=\nu_\X^\textup{mot}$, where $\X=\dX_\textup{cl}$ is its classical truncation equipped with the induced orientation and d-critical structure.
\begin{theorem}[{\cite[Theorem 2.5.5]{Bu2024AMI}}, {\cite[Theorem 5.14]{BenBassatBravBussiJoyce2015}}]
    Let $\X$, $\Y$ be oriented d-critical stacks that are Nisnevich locally quotient stacks, and let $f:\Y\to\X$ be a smooth morphism of relative dimension $d$ which is compatible with the orientation and d-critical structures. Then there is an identity
    \[
    f^*(\nu_\X^\textup{mot})=\L^{d/2}\cdot \nu_\Y^\textup{mot}\in\hat{\mathbb{M}}^\textup{mon}(\Y).
    \]
\end{theorem}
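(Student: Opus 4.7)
The plan is to reduce the stack-level identity to its counterpart at the level of varieties and then invoke the uniqueness clause in the preceding theorem. Since $\nu_\Y^\textup{mot}$ is characterized by its pullbacks along smooth morphisms from varieties, it is enough to verify that for every smooth morphism $g:Z\to\Y$ from a variety $Z$ of relative dimension $e$, one has
\[
g^*\bigl(f^*\nu_\X^\textup{mot}\bigr)=g^*\bigl(\L^{d/2}\nu_\Y^\textup{mot}\bigr)\in\hat{\mathbb{M}}^\textup{mon}(Z).
\]

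For the left-hand side, the composition $f\circ g:Z\to\X$ is smooth of relative dimension $d+e$ from a variety, so the defining property of $\nu_\X^\textup{mot}$ yields $g^*f^*\nu_\X^\textup{mot}=\L^{(d+e)/2}\nu_{Z,\X}^\textup{mot}$, where $\nu_{Z,\X}^\textup{mot}$ is the motivic Behrend function of $Z$ equipped with the oriented d-critical structure pulled back from $\X$ along $f\circ g$. Applying the same characterization to $g$ alone and multiplying by $\L^{d/2}$ produces $g^*(\L^{d/2}\nu_\Y^\textup{mot})=\L^{(d+e)/2}\nu_{Z,\Y}^\textup{mot}$, where $\nu_{Z,\Y}^\textup{mot}$ uses the structure on $Z$ obtained by two successive applications of Lemma~\ref{inducedorientation}, first from $\X$ to $\Y$ along $f$ (this is the compatibility hypothesis) and then from $\Y$ to $Z$ along $g$. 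The desired identity therefore follows once these two induced oriented d-critical structures on $Z$ are shown to coincide.

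The key functoriality to verify is that the two resulting square-roots of the canonical bundle on $Z^\textup{red}$ agree, i.e.\ that $(f\circ g)^*K_\X^{\frac{1}{2}}\otimes\det(\mathbb{L}_{Z/\X})$ is canonically isomorphic to $g^*f^*K_\X^{\frac{1}{2}}\otimes g^*\det(\mathbb{L}_{\Y/\X})\otimes\det(\mathbb{L}_{Z/\Y})$. This is immediate from the splitting $\mathbb{L}_{Z/\X}\cong g^*\mathbb{L}_{\Y/\X}\oplus\mathbb{L}_{Z/\Y}$ for a composition of smooth morphisms, together with the hypothesis that $f$ is compatible with the orientation data; the compatibility of the two induced d-critical structures is parallel and appeals to the corresponding functoriality in \cite{JoyceDcritical}. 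The main conceptual obstacle, namely a direct scheme-level argument via critical charts and the behavior of the motivic vanishing cycle under smooth enlargement of the ambient smooth scheme in Definition~\ref{motivicBF}, is already packaged inside the preceding theorem; once the induced structures are matched, its uniqueness statement delivers the identity in $\hat{\mathbb{M}}^\textup{mon}(\Y)$.
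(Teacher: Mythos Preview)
The paper does not give its own proof of this statement: it is quoted as \cite[Theorem~2.5.5]{Bu2024AMI} and \cite[Theorem~5.14]{BenBassatBravBussiJoyce2015} and used as a black box, so there is nothing to compare against directly.

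Your argument is nonetheless the natural one and is essentially correct. You reduce to the uniqueness clause of the preceding characterization theorem by testing $\L^{-d/2}f^*\nu_\X^{\textup{mot}}$ against all smooth morphisms $g:Z\to\Y$ from varieties, and you correctly isolate the only nontrivial point, namely that the oriented d-critical structure on $Z$ induced from $\X$ via $f\circ g$ agrees with the one induced in two steps via $\Y$. One minor imprecision: for a composition of smooth morphisms one has a short exact sequence
\[
0\longrightarrow g^*\mathbb{L}_{\Y/\X}\longrightarrow \mathbb{L}_{Z/\X}\longrightarrow \mathbb{L}_{Z/\Y}\longrightarrow 0,
\]
not in general a splitting; but of course this is already enough to give the determinant identity $\det(\mathbb{L}_{Z/\X})\cong g^*\det(\mathbb{L}_{\Y/\X})\otimes\det(\mathbb{L}_{Z/\Y})$ that you need. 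With that adjustment, your proof goes through.
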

The following theorem gives a formula for motivic Behrend function on the product, though it is a direct consequence of the Thom--Sebastiani Theorem for motivic vanishing cycle, we provide a proof here. In the proof we will use the Thom--Sebastiani Theorem for motivic vanishing cycle \cite[Theorem 2.4]{BussiJoyceMeinhardt2019}.
\begin{theorem}[Thom--Sebastiani Theorem for Motivic Behrend function]\label{TStheorem}\leavevmode

\noindent Let $(\X,s,o_\X)$, $(\Y,t,o_\Y)$ be oriented d-critical stacks. Consider the induced oriented d-critical structure on the product $(\X\times\Y,s\boxplus t, o_\X\boxtimes o_\Y)$. Then we have
    \[
    \nu_\X^\textup{mot}\boxtimes \nu_\Y^\textup{mot}=\nu_{\X\times\Y}^{\textup{mot}}\in\hat{\mathbb{M}}^\textup{mon}(\X\times\Y).
    \]
\end{theorem}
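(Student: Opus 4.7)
The plan is a three-step reduction: first from stacks to schemes, then from schemes to critical charts, and finally to motivic Thom--Sebastiani for vanishing cycles. For the first step, the motivic Behrend function on a stack is uniquely determined by its smooth pullbacks to schemes. Taking smooth covers $W_1\to\X$ and $W_2\to\Y$ from varieties of relative dimensions $d_1,d_2$, the product $W_1\times W_2\to\X\times\Y$ is a smooth cover of relative dimension $d_1+d_2$. Since pullback commutes with exterior product, pulling back $\nu_\X^{\textup{mot}}\boxtimes\nu_\Y^{\textup{mot}}$ yields $\L^{(d_1+d_2)/2}(\nu_{W_1}^{\textup{mot}}\boxtimes\nu_{W_2}^{\textup{mot}})$, while pulling back $\nu_{\X\times\Y}^{\textup{mot}}$ yields $\L^{(d_1+d_2)/2}\nu_{W_1\times W_2}^{\textup{mot}}$. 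Hence it suffices to prove the scheme-level identity.

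For oriented d-critical schemes $(X,s,o_X)$ and $(Y,t,o_Y)$, pick critical charts $(R_j,\eta_j,U_j,f_j,i_j)$ for $j=1,2$. The product $(R_1\times R_2,\,\eta_1\times\eta_2,\,U_1\times U_2,\,f_1\boxplus f_2,\,i_1\times i_2)$ is a critical chart for $(X\times Y,\,s\boxplus t)$: one has $\textup{Crit}(f_1\boxplus f_2)=\textup{Crit}(f_1)\times\textup{Crit}(f_2)$, and modulo the square of the product ideal one recovers $s\boxplus t$. Since such product charts cover $X\times Y$ as $(R_1,R_2)$ range over covers of the factors, the uniqueness in Definition~\ref{motivicBF} reduces the identity to a comparison after pullback along $i_1\times i_2$.

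Unfolding both sides via Definition~\ref{motivicBF}, the left-hand side becomes
\[
\L^{-(\dim U_1+\dim U_2)/2}\cdot\bigl(\Phi_{f_1}([U_1])\odot\Phi_{f_2}([U_2])\bigr)\cdot\Upsilon(A_1)\cdot\Upsilon(A_2),
\]
with $A_j=i_j^{*}(K^{\frac{1}{2}})\otimes K_{U_j}^{-1}$ on $\textup{Crit}(f_j)^{\textup{red}}$, while the right-hand side becomes $-\L^{-(\dim U_1+\dim U_2)/2}\cdot\Phi_{f_1\boxplus f_2}([U_1\times U_2])\cdot\Upsilon(A)$ for the analogous datum $A$ on the product chart. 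Three ingredients then close the argument: motivic Thom--Sebastiani \cite[Theorem~2.4]{BussiJoyceMeinhardt2019} relates $\Phi_{f_1}([U_1])\odot\Phi_{f_2}([U_2])$ to $\Phi_{f_1\boxplus f_2}([U_1\times U_2])$; the multiplicativity $\Upsilon(P\otimes Q)=\Upsilon(P)\cdot\Upsilon(Q)$, combined with the canonical isomorphism $K_{U_1\times U_2}\cong K_{U_1}\boxtimes K_{U_2}$ and the product-orientation identification $K_{X\times Y}^{\frac{1}{2}}\cong K_X^{\frac{1}{2}}\boxtimes K_Y^{\frac{1}{2}}$, yields $\Upsilon(A_1)\Upsilon(A_2)=\Upsilon(A)$; and additivity of dimension handles the Lefschetz factor.

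The main technical obstacle is sign and Tate-twist bookkeeping: the two minus signs appearing on the left must combine with the sign convention in motivic Thom--Sebastiani so as to reproduce the single minus sign on the right, and the $\Upsilon$-factors must be verified to agree under the chosen product orientation. Both are careful comparisons of conventions with \cite{BenBassatBravBussiJoyce2015, BussiJoyceMeinhardt2019}, after which the identity follows essentially immediately.
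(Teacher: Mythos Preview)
Your proposal is correct and follows essentially the same route as the paper: reduce to schemes, then to critical charts, and invoke motivic Thom--Sebastiani for vanishing cycles together with the multiplicativity of $\Upsilon$ and the product orientation. The only notable difference is that for the stacks-to-schemes step you use the smooth-pullback characterization of $\nu^{\textup{mot}}$ directly, whereas the paper passes through explicit quotient presentations $[X/\mathrm{GL}(n)]$ and the formula $\nu_\X^{\textup{mot}}=[\mathrm{GL}(n)]^{-1}p_!(\L^{n^2/2}\nu_X^{\textup{mot}})$; your version is marginally cleaner but amounts to the same argument.
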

\begin{proof}
We first prove the theorem when $\X$ and $\Y$ are algebraic spaces. Since algebraic spaces are Nisnevich locally covered by schemes and our motives satisfy Nisnevich descent \cite[Theorem 2.2.3]{Bu2024AMI}, we may consider the case when $\X=X$, $\Y=Y$ are schemes. 

We now verify this on d-critical charts. Let $i:\textup{Crit}(f)\hookrightarrow X$, $j:\textup{Crit}(g)\hookrightarrow Y$ be d-critical chart, where $f:U\to\mathbb{A}^1$, $g:V\to\mathbb{A}^1$ are regular functions. Then we consider the d-critical chart $i\times j:\textup{Crit}(f)\times\textup{Crit}(g)\hookrightarrow X\times Y$.

By the definition of motivic Behrend function (\ref{motivicBF}) we have
\[
(i\times j)^*\nu_{X\times Y}^\textup{mot}=\L^{-(\dim U\times V)/2}\Phi_{f\boxplus g}([U\times V])\Upsilon((i\times j)^*(K_{X\times Y}^{\frac{1}{2}})\otimes K_{U\times V}^{-1}|_{\textup{Crit}(f\boxplus g)^{\textup{red}}})
\]
The Thom-Sebastiani Theorem for motivic vanishing cycles \cite[Theorem 2.4]{BussiJoyceMeinhardt2019} yields
\[
\Phi_f(U)\boxtimes\Phi_g(V)=\Phi_{f\boxplus g}(U\times V)
\]
and the product orientation on $X\times Y$ is defined by $K_{X\times Y}^{\frac{1}{2}}=K_X^\frac{1}{2}\boxtimes K_Y^\frac{1}{2}$. Thus
\begin{align*}
    &\L^{-(\dim U\times V)/2}\Phi_{f\boxplus g}([U\times V])\Upsilon((i\times j)^*(K_{X\times Y}^{\frac{1}{2}})\otimes K_{U\times V}^{-1}|_{\textup{Crit}(f\boxplus g)^{\textup{red}}})\\
    =&\L^{-\dim U/2}\Phi_f([U])\Upsilon(i^*(K^\frac{1}{2}_X)\otimes K^{-1}_X|_{\textup{Crit}(f)^\textup{red}})\boxtimes\L^{-\dim V/2}\Phi_g([V])\Upsilon(j^*(K^\frac{1}{2}_Y)\otimes K^{-1}_Y|_{\textup{Crit}(g)^\textup{red}})\\
    =&i^*\nu_X^\textup{mot}\boxtimes j^*\nu_Y^\textup{mot}.
\end{align*}
Now we consider the general case, by assumption the stacks $\X$ and $\Y$ are Nisnevich locally quotient stacks. We consider the case $\X=[X/\textup{GL}(n)]$ and $\Y=[Y/\textup{GL}(m)]$ when $X$ and $Y$ are algebraic space with orientations and d-critical structures. Let $p:X\to[X/\textup{GL}(n)]$, $q:Y\to[Y/\textup{GL}(m)]$ be quotient maps. By the construction of motivic Behrend function on stacks,
\[
\nu_\X^\textup{mot}=[\textup{GL}(n)]^{-1}p_!(\L^{n^2/2}\cdot \nu_X^{\textup{mot}}), \nu_\Y^\textup{mot}=[\textup{GL}(m)]^{-1}q_!(\L^{m^2/2}\cdot \nu_Y^{\textup{mot}})
\]
We finally get
\begin{align*}
    \nu_\X^\textup{mot}\boxtimes \nu_\Y^\textup{mot}
    &=[\textup{GL}(n)\times\textup{GL}(m)]^{-1}(p\times q)_!(\mathbb{L}^{m^2+n^2/2}\cdot\nu_X^\textup{mot}\boxtimes \nu_Y^{\textup{mot}})\\
    &=[\textup{GL}(n+m)]^{-1}\cdot(p\times q)_!(\L^{(m+n)^2/2}\nu_{X\times Y}^\textup{mot})=\nu_{\X\times \Y}^\textup{mot}.
\end{align*}
\end{proof}
\subsection{Graded and Filtered objects}
We introduce the notion of graded objects and filtered objects following \cite{HalpernLeistner2014}.
\begin{definition}[Mapping stack]
    Let $\mathcal{X}, \mathcal{Y}$ be stacks over some site, we define the mapping stack 
    \begin{align*}
            \underline{\textup{Map}}(\mathcal{Y},\mathcal{X}):&(\textup{Sch}/\mathbb{C})\to(\textup{Gpd})\\
            & T\mapsto\textup{Map}(\mathcal{Y}\times T,\mathcal{X})
    \end{align*}
    
\end{definition}

\begin{definition}
    Let $\mathcal{X}$ be an algebraic stack over $\mathbb{C}$. The stack of graded objects in $\mathcal{X}$ is defined as the mapping stack over $\textup{Spec}(\mathbb{C})$
\begin{align*}
        &\textup{Grad}(\mathcal{X}):=\underline{\textup{Map}}([*/\mathbb{G}_m],\mathcal{X})\\
        &\textup{Filt}(\mathcal{X}):= \underline{\textup{Map}}([\mathbb{A}^1/\mathbb{G}_m],\mathcal{X})
\end{align*}
where, $\mathbb{G}_m$ acts on $\mathbb{A}^1$ by weight $-1$.
\end{definition}
These stacks are again algebraic stack by the following porposition.
\begin{proposition}[{\cite[Proposition 1.1.2]{HalpernLeistner2014}}]
    Let $\mathcal{X}$ be an algebraic stack over $\mathbb{C}$. Then $\textup{Grad}(\mathcal{X})$ and $\textup{Filt}(\mathcal{X})$ are also algebraic. Moreover, if $\mathcal{X}$ has affine
stabilizers, then so do Grad($\mathcal{X}$) and Filt($\mathcal{X}$).
\end{proposition}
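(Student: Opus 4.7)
The plan is to reduce to the case of a quotient stack $[U/G]$ and then exhibit $\textup{Grad}$ and $\textup{Filt}$ explicitly as disjoint unions of quotient stacks indexed by cocharacters of $G$.

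\textbf{Step 1 (Reduction to quotient stacks).} Since $\textup{Grad}$ and $\textup{Filt}$ are defined as mapping stacks, they preserve fiber products and descend under smooth coverings. Using that any algebraic stack admits smooth-locally a presentation $\X \cong [U/\textup{GL}_n]$ with $U$ a scheme (or, more generally, a quotient $[U/G]$ by an algebraic group $G$), we reduce to the case $\X = [U/G]$.

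\textbf{Step 2 (Explicit description for quotient stacks).} A $T$-point of $\textup{Grad}([U/G])$ is a morphism $T \times [\ast/\mathbb{G}_m] \to [U/G]$, which by the standard dictionary amounts to a $\mathbb{G}_m$-equivariant $G$-torsor on $T$ (with trivial $\mathbb{G}_m$-action on $T$) together with a $G$- and $\mathbb{G}_m$-equivariant morphism to $U$. Since $\mathbb{G}_m$-equivariant structures on the trivial $G$-torsor are classified, up to conjugation, by cocharacters $\lambda:\mathbb{G}_m \to G$, one obtains the decomposition
\[
\textup{Grad}([U/G]) \;\cong\; \coprod_{[\lambda]} \bigl[U^{\lambda}/L_G(\lambda)\bigr],
\]
where $[\lambda]$ runs over $G$-conjugacy classes of cocharacters, $U^\lambda$ is the $\lambda$-fixed locus in $U$, and $L_G(\lambda)$ is the centralizer. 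An analogous analysis of morphisms $T \times [\mathbb{A}^1/\mathbb{G}_m] \to [U/G]$, viewed as one-parameter degenerations, yields
\[
\textup{Filt}([U/G]) \;\cong\; \coprod_{[\lambda]} \bigl[U^{\lambda,+}/P_G(\lambda)\bigr],
\]
where $U^{\lambda,+}$ is the Drinfeld attractor and $P_G(\lambda)$ the parabolic subgroup associated to $\lambda$.

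\textbf{Step 3 (Algebraicity and affine stabilizers).} Bialynicki--Birula and Drinfeld theory ensure that $U^\lambda$ is a closed subscheme of $U$ and that $U^{\lambda,+}$ is a locally closed subscheme; the subgroup schemes $L_G(\lambda)$ and $P_G(\lambda)$ are closed in $G$. Each summand is therefore a quotient of an algebraic space by an algebraic group, hence algebraic, and smooth descent from Step 1 then yields algebraicity of $\textup{Grad}(\X)$ and $\textup{Filt}(\X)$ in general. When $G$ is affine, $L_G(\lambda)$ and $P_G(\lambda)$ are closed subgroups of an affine group scheme and are themselves affine, which forces the stabilizers of $\textup{Grad}(\X)$ and $\textup{Filt}(\X)$ at every geometric point to be affine as well.

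\textbf{Main obstacle.} The principal difficulty is making Step 2 rigorous over a general base $T$: one must track families of cocharacters carefully, verify that Drinfeld's attractor $U^{\lambda,+}$ is representable by a genuine locally closed subscheme (not only set-theoretically), and check that the disjoint-union decomposition is compatible with flat base change and with smooth descent back to the original stack $\X$. These foundational representability and descent issues are precisely what \cite{HalpernLeistner2014} establishes.
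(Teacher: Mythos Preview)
The paper does not give its own proof of this proposition: it is stated with a citation to \cite[Proposition~1.1.2]{HalpernLeistner2014} and used as a black box, so there is nothing in the paper to compare your argument against.

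For what it is worth, your sketch is broadly in the spirit of the argument in \cite{HalpernLeistner2014}, whose key computation is precisely the quotient-stack description
\[
\textup{Grad}([U/G])\;\cong\;\coprod_{[\lambda]}[U^{\lambda}/L_G(\lambda)],
\qquad
\textup{Filt}([U/G])\;\cong\;\coprod_{[\lambda]}[U^{\lambda,+}/P_G(\lambda)].
\]
One caution about your Step~1: the functors $\textup{Grad}$ and $\textup{Filt}$ do \emph{not} in general commute with fiber products or with passing to a smooth atlas in the na\"ive way you suggest (mapping stacks out of $[\ast/\mathbb{G}_m]$ or $[\mathbb{A}^1/\mathbb{G}_m]$ are covariant in the target, not limit-preserving there). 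Halpern-Leistner instead proves representability directly by analyzing the deformation theory of maps from $\Theta$ and $B\mathbb{G}_m$ and invoking Artin's criteria, with the quotient-stack formula serving as a local model rather than as the endpoint of a descent argument. Your ``main obstacle'' paragraph already flags this, so you are aware the reduction is where the real work lies.
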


We now consider the following diagram 
\[\begin{tikzcd}
	{[*/\mathbb{G}_m]} & {[\mathbb{A}^1/\mathbb{G}_m]} & *
	\arrow["\iota"', shift right, from=1-1, to=1-2]
	\arrow["q"{description}, curve={height=22pt}, from=1-3, to=1-1,shift right]
	\arrow["{\textup{pr}}"', shift right, from=1-2, to=1-1]
	\arrow["0"', shift right, from=1-3, to=1-2]
	\arrow["1", shift left, from=1-3, to=1-2]
\end{tikzcd}\]
where $q$ is the quotient map, $0,1$ are the compositions $0,1:*\to\mathbb{A}^1\to\mathbb{A}^1/\mathbb{G}_m$, $\iota$ is the inclusion map and $\textup{pr}$ is the projection map. These morphisms induced morphisms between $\textup{Grad}(\mathcal{X})$, $\textup{Filt}(\mathcal{X})$ and $\mathcal{X}$ via pullback
\[\begin{tikzcd}
	{\textup{Grad}(\mathcal{X})} & {\textup{Filt}(\mathcal{X})} & \mathcal{X}
	\arrow["{\textup{sf}}", shift left, from=1-1, to=1-2]
	\arrow["{\textup{tot}}"{description}, curve={height=-24pt}, from=1-1, to=1-3,shift left]
	\arrow["{\textup{gr}}", shift left, from=1-2, to=1-1]
	\arrow["{\textup{ev}_0}", shift left, from=1-2, to=1-3]
	\arrow["{\textup{ev}_1}"', shift right, from=1-2, to=1-3]
\end{tikzcd}\]

The stack of graded objects \( \mathrm{Grad}(\dX) \) inherits a natural orientation and a \( (-1) \)-shifted symplectic structure from \( \dX \).
\begin{theorem}[{\cite[Theorem 3.1.6]{Bu2024AMI}}]
        The derived stack $\textup{Grad}(\dX)$ has a $(-1)$ shifted symplectic structure given by $\textup{tot}^\star\omega_\dX$ and an orientation $o_{\textup{Grad}(\dX)}$ induced from $\dX$.
\end{theorem}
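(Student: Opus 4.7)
The plan is to reduce both parts of the theorem to the $\mathbb{G}_m$-weight decomposition of $\textup{tot}^\star\mathbb{T}_\dX$ on $\textup{Grad}(\dX)$, which exists because $\textup{Grad}(\dX)=\underline{\textup{Map}}([\ast/\mathbb{G}_m],\dX)$ carries a universal $\mathbb{G}_m$-action on pullbacks along $\textup{tot}$. Throughout, write $L:=\textup{tot}^\star\mathbb{L}_\dX$ with its weight decomposition $L=\bigoplus_{k\in\mathbb{Z}}L_k$, which is $\mathbb{G}_m$-equivariant.

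First, the form $\textup{tot}^\star\omega_\dX$ is automatically closed of degree $-1$, since pullback of closed forms preserves the closing data. It remains to verify that the induced map $\Theta:\mathbb{T}_{\textup{Grad}(\dX)}\to\mathbb{L}_{\textup{Grad}(\dX)}[-1]$ is an equivalence. Using the standard formula for tangent complexes of mapping stacks together with the fact that sheaf cohomology on $B\mathbb{G}_m$ selects the weight-$0$ piece of a $\mathbb{G}_m$-representation, one obtains canonical identifications $\mathbb{T}_{\textup{Grad}(\dX)}\cong L_0^\vee$ and $\mathbb{L}_{\textup{Grad}(\dX)}\cong L_0$; moreover the differential $d\,\textup{tot}$ is the inclusion $L_0^\vee\hookrightarrow L^\vee$ whose dual is the projection $L\twoheadrightarrow L_0$. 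The non-degeneracy of $\omega_\dX$ gives, after pullback, a $\mathbb{G}_m$-equivariant equivalence $L^\vee\overset{\sim}{\to} L[-1]$, which decomposes into weight components $\omega_k:L_k^\vee\overset{\sim}{\to} L_{-k}[-1]$. The map $\Theta$ coincides with the weight-$0$ component $\omega_0$, which is an equivalence, establishing the $(-1)$-shifted symplectic structure.

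For the orientation, use the symplectic equivalence $L\cong L^\vee[1]$ to identify weight components as $L_k\cong L_{-k}^\vee[1]$. Taking determinants and using $\det(M[1])=\det(M)^{-1}$ and $\det(M^\vee)=\det(M)^{-1}$, one obtains $\det(L_k)\cong\det(L_{-k})$ for every $k\neq 0$. Consequently
\[
\textup{tot}^\star K_\dX=\det(L)\cong\det(L_0)\otimes\bigotimes_{k\neq 0}\det(L_k)\cong K_{\textup{Grad}(\dX)}\otimes N^{\otimes 2},
\]
where $N:=\bigotimes_{k>0}\det(L_k)$. Setting
\[
K_{\textup{Grad}(\dX)}^{\frac{1}{2}}:=\textup{tot}^\star K_\dX^{\frac{1}{2}}\otimes N^{-1},
\]
and defining $o_{\textup{Grad}(\dX)}:(K_{\textup{Grad}(\dX)}^{\frac{1}{2}})^{\otimes 2}\cong K_{\textup{Grad}(\dX)}$ by squaring $o_\dX$ and applying the identification above, yields the induced orientation.

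The main obstacle is upgrading these pointwise weight-decomposition arguments to a coherent statement over all of $\textup{Grad}(\dX)$: one must ensure that $L=\bigoplus L_k$ is genuinely a decomposition of perfect complexes in $\textup{QCoh}(\textup{Grad}(\dX))$, that the isomorphism $L_k\cong L_{-k}^\vee[1]$ varies in families, and that only finitely many $L_k$ are nonzero on any quasi-compact piece so that $N$ is a well-defined line bundle there. These claims reduce to standard properties of $\mathbb{G}_m$-equivariant perfect complexes together with the compatibility between the universal family on the mapping stack and $\omega_\dX$; once they are in place, the remainder of the argument is formal, following the AKSZ-type approach of \cite{PTVV2013}.
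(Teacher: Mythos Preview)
The paper does not give its own proof of this statement; it is quoted verbatim as \cite[Theorem 3.1.6]{Bu2024AMI} and used as a black box. So there is no paper-proof to compare against.

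That said, your sketch is correct and is essentially the argument one finds in Bu's paper. The two key inputs you identify are exactly the right ones: (i) the identification $\mathbb{L}_{\Grad(\dX)}\simeq(\textup{tot}^\star\mathbb{L}_\dX)_0$ coming from the mapping-stack description of $\Grad(\dX)$ (the analogue for $\Filt$ appears in the paper as Lemma~\ref{virtualdimensionformula}), so that non-degeneracy of $\textup{tot}^\star\omega_\dX$ on $\Grad(\dX)$ reduces to non-degeneracy of the weight-$0$ block $\omega_0$; and (ii) the pairing $L_k\simeq L_{-k}^\vee[1]$ induced by $\omega_\dX$, which makes $\det L_k\cong\det L_{-k}$ and lets you peel off the square $N^{\otimes 2}$ from $\textup{tot}^\star K_\dX$ to define $K_{\Grad(\dX)}^{1/2}$. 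Your caveat at the end is also well placed: the only genuine work is making the weight decomposition of $\textup{tot}^\star\mathbb{L}_\dX$ into a decomposition of perfect complexes on $\Grad(\dX)$ with locally finitely many nonzero pieces, and checking that the isomorphisms $\det L_k\cong\det L_{-k}$ can be chosen coherently (there are sign conventions to fix here). Once those are in hand, the rest is formal, exactly as you say.
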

Let $f:\Y\to\X$ be a morphism of algebraic stacks, we may consider the induced morphism on their stacks of graded and filtered objects via composition with $f$.
\begin{align*}
    \textup{Grad}(f)&:=(f\circ -):\textup{Grad}(\Y)\longrightarrow\textup{Grad}(\X)\\
    \textup{Filt}(f)&:=(f\circ -):\textup{Filt}(\Y)\longrightarrow\textup{Filt}(\X).
\end{align*}
These maps inherit properties from the map $f$.  The following result will be needed later in this paper.
\begin{corollary}\label{restrictionlemma}
    Let $f:\Y\to\X$ be an open immersion. We have the following commutative diagram:    
\[\begin{tikzcd}
	\textup{Grad}(\Y) & \textup{Filt}(\Y) & \Y \\
	\textup{Grad}(\X) & \textup{Filt}(\X) & \X
	\arrow[from=1-1, to=2-1]
	\arrow["{\textup{gr}}"',from=1-2, to=1-1]
	\arrow["{\textup{ev}_1}", from=1-2, to=1-3]
	\arrow[from=1-2, to=2-2]
	\arrow[from=1-3, to=2-3]
	\arrow["{\textup{gr}}"',from=2-2, to=2-1]
	\arrow["{\textup{ev}_1}", from=2-2, to=2-3]
    \arrow["\lrcorner"{anchor=center, pos=0.125, rotate=-90}, draw=none, from=1-2, to=2-1]
\end{tikzcd}\]
where all the vertical maps are open immersions.
\end{corollary}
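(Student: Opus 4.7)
The plan is to analyze the diagram on the level of $T$-points. Commutativity of the whole diagram is automatic from the functoriality of the mapping-stack construction together with the naturality of $\textup{gr}$ and $\textup{ev}_1$, so all the content lies in showing that the three vertical arrows are open immersions and that the left-hand square is cartesian.

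For the open-immersion statement, I will check it on a test scheme $T$. A $T$-point of $\textup{Filt}(\X)$ is a morphism $\phi\colon T\times[\mathbb{A}^1/\mathbb{G}_m]\to\X$, and it lifts to $\textup{Filt}(\Y)$ precisely when $\phi$ factors through the open substack $\Y$. Letting $K=\phi^{-1}(\X\setminus\Y)\subset T\times[\mathbb{A}^1/\mathbb{G}_m]$, a closed substack, and $\pi$ the projection to $T$, the lifting locus is $T\setminus\pi(K)$. The key observation is that a $\mathbb{G}_m$-invariant closed subset of $\mathbb{A}^1$ must be empty, $\{0\}$, or all of $\mathbb{A}^1$; hence the fibres of $K\to T$ fall into the same trichotomy, so $\pi(K)$ equals the image of $K\cap(T\times\{[0]\})$, which is visibly closed. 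The argument for $\textup{Grad}(f)$ is identical with $[*/\mathbb{G}_m]$ in place of $[\mathbb{A}^1/\mathbb{G}_m]$, and $f$ itself is an open immersion by hypothesis.

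For the cartesian property of the left square, I will prove that any $T$-point of the pullback $\textup{Grad}(\Y)\times_{\textup{Grad}(\X)}\textup{Filt}(\X)$ lifts uniquely to $\textup{Filt}(\Y)$. Such a $T$-point is a morphism $\phi\colon T\times[\mathbb{A}^1/\mathbb{G}_m]\to\X$ whose restriction to $T\times[*/\mathbb{G}_m]$ factors through $\Y$, and I must show $\phi$ itself does. Equivalently, the closed substack $K=\phi^{-1}(\X\setminus\Y)$, known to be disjoint from $T\times\{[0]\}$, must be empty. This holds because in $[\mathbb{A}^1/\mathbb{G}_m]$ the closure of the open point is the whole stack (the $\mathbb{G}_m$-orbit closure of $1\in\mathbb{A}^1$ is all of $\mathbb{A}^1$), so every non-empty closed subset of $T\times[\mathbb{A}^1/\mathbb{G}_m]$ meets $T\times\{[0]\}$, forcing $K=\emptyset$. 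The main subtlety to watch is that $\textup{Grad}$ and $\textup{Filt}$ are derived mapping stacks, so one might worry about the cartesian property at the derived level; however, the comparison map $\textup{Filt}(\Y)\to\textup{Grad}(\Y)\times_{\textup{Grad}(\X)}\textup{Filt}(\X)$ is itself an open immersion that is a bijection on classical truncations by the above, and is therefore automatically an equivalence.
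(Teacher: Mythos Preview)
Your argument is correct. The paper's own proof is a one-line citation to \cite[Section~1]{HalpernLeistner2014} and \cite[Lemma~3.2.4]{Bu2024AMI}, so you have effectively reconstructed the content of those references rather than taking a genuinely different route. Your key observation---that every nonempty $\mathbb{G}_m$-invariant closed subset of $T\times\mathbb{A}^1$ meets $T\times\{0\}$, because the $\mathbb{G}_m$-orbit closure of any point contains the zero section---is exactly the mechanism Halpern-Leistner uses to show that $\textup{Filt}$ and $\textup{Grad}$ preserve open immersions and that the left square is cartesian. Your handling of the derived issue (open immersion plus bijection on classical points implies equivalence, since open substacks of a derived stack are determined by their classical truncations) is also the standard argument. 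The only thing one might add for completeness is the remark that $\textup{Filt}(\Y)\to\textup{Filt}(\X)$ is a monomorphism because $\Y\hookrightarrow\X$ is, so factoring through $\Y$ is a property rather than extra data; you use this implicitly when identifying the fibre product with an open subscheme of $T$.
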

\begin{proof}
    This follows from \cite[Section 1]{HalpernLeistner2014} and \cite[Lemma 3.2.4.]{Bu2024AMI}.
\end{proof}

\subsubsection{Derived version}
    The stacks of grade objects and filtered objects has a natural derived enhancement. For derived stacks $\mathfrak{X}, \mathfrak{Y}$, we defined derived mapping stacks
    \begin{align*}
        \underline{\textup{dMap}}(\mathfrak{Y},\mathfrak{X}):(\textup{dSch}&/\mathbb{C})\to\S\\
        &T\mapsto\textup{Map}(\mathfrak{Y}\times T,\mathfrak{X})
    \end{align*}
    from the $\infty$-category of derived shcemes to the $\infty$-category of spaces.

   For a derived stack $\mathfrak{X}$
   \begin{align*}
   \textup{Filt}(\mathfrak{X}):=\underline{\textup{dMap}}([\mathbb{A}^1/\mathbb{G}_m],\mathfrak{X}),\
   \textup{Grad}(\mathfrak{X}):=\underline{\textup{dMap}}([*/\mathbb{G}_m],\mathfrak{X}).
   \end{align*}
   Note that given dervied stacks $\mathfrak{X},\mathfrak{Y}$. The classical truncation of the derived mapping stack is not necessary the same as the mapping stack of their classical truncations. However, we have the following result
\begin{proposition}[{\cite[Theorem 1.2.1]{HalpernLeistner2014}}]
    Let $\mathfrak{X}$ be a derived algebraic stack, then 
    \[
    \textup{Filt}(\mathfrak{X})_{\textup{cl}}=\textup{Filt}(\mathfrak{X}_{\textup{cl}}), \textup{Grad}(\mathfrak{X})_{\textup{cl}}=\textup{Grad}(\mathfrak{X}_{\textup{cl}})
    \]
\end{proposition}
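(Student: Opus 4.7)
The plan is to argue by a direct application of the adjunction between classical and derived algebraic stacks. The classical truncation functor $(-)_{\textup{cl}}$ is right adjoint to the inclusion of classical stacks into derived stacks, so for any classical stack $Z$ and any derived stack $\mathfrak{Y}$, one has a natural equivalence
\[
\textup{Map}_{\textup{dSt}}(Z, \mathfrak{Y}) \;\simeq\; \textup{Map}(Z, \mathfrak{Y}_{\textup{cl}}).
\]
In particular, any morphism from a classical stack to a derived stack factors uniquely through the classical truncation of the target.

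With this in hand, I would unpack the definitions of both sides of the first identity on a classical test scheme $T$. A $T$-point of $\textup{Filt}(\mathfrak{X})_{\textup{cl}}$ is simply a $T$-point of $\textup{Filt}(\mathfrak{X})$, which by definition is a morphism $[\mathbb{A}^1/\mathbb{G}_m] \times T \to \mathfrak{X}$ of derived stacks. A $T$-point of $\textup{Filt}(\mathfrak{X}_{\textup{cl}})$ is a morphism $[\mathbb{A}^1/\mathbb{G}_m] \times T \to \mathfrak{X}_{\textup{cl}}$. Since $[\mathbb{A}^1/\mathbb{G}_m]$ is already a classical stack and $T$ is classical, their product is classical, so the adjunction above identifies these two mapping spaces functorially in $T$. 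By Yoneda this yields the claimed isomorphism of stacks. The second equality follows by the identical argument with $[*/\mathbb{G}_m]$ in place of $[\mathbb{A}^1/\mathbb{G}_m]$, since $B\mathbb{G}_m$ is also a classical stack.

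The main obstacle I expect is verifying that $\textup{Filt}(\mathfrak{X})$ and $\textup{Grad}(\mathfrak{X})$ are represented by derived algebraic stacks whose truncations are again algebraic in the expected sense, so that the equality is meaningful as an identity of algebraic stacks rather than merely of functors of points. The formal adjunction argument above produces the equality at the level of $T$-points for classical $T$, but to upgrade this to an isomorphism of algebraic stacks one needs to know algebraicity of both sides; this is the substantive geometric content and is the reason the statement is attributed to \cite[Theorem 1.2.1]{HalpernLeistner2014}. Once algebraicity is granted, the identification on classical points, being functorial, extends to an isomorphism of classical algebraic stacks, completing the proof.
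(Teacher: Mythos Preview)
The paper does not give its own proof of this proposition; it simply records the statement and attributes it to \cite[Theorem 1.2.1]{HalpernLeistner2014}. So there is no in-paper argument to compare against.

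Your formal argument is correct and is the standard one: for a classical test scheme $T$, the source $[\mathbb{A}^1/\mathbb{G}_m]\times T$ (resp.\ $[*/\mathbb{G}_m]\times T$) is classical, and a map from a classical stack into a derived stack $\mathfrak{X}$ factors uniquely through $\mathfrak{X}_{\mathrm{cl}}$, giving the identification on functors of points. You are also right that the substantive geometric input is the algebraicity of $\mathrm{Filt}(\mathfrak{X})$ and $\mathrm{Grad}(\mathfrak{X})$ as derived algebraic stacks, which is precisely what the cited theorem of Halpern-Leistner supplies; once that is in hand, the classical side is already known to be algebraic by the earlier Proposition (cited as \cite[Proposition 1.1.2]{HalpernLeistner2014} in the paper), and your Yoneda argument upgrades to an isomorphism of algebraic stacks. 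One small comment: rather than phrasing the key fact as an adjunction, it is slightly cleaner to say that $\mathfrak{X}_{\mathrm{cl}}$ is by definition the restriction of the functor $\mathfrak{X}$ to classical affines, so $\mathrm{Map}(W,\mathfrak{X})\simeq\mathrm{Map}(W,\mathfrak{X}_{\mathrm{cl}})$ for classical $W$ is immediate from writing $W$ as a colimit of classical affines; this avoids any worry about which direction the adjunction runs.
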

\subsection{Motivic Integral Identity}
The motivic integral identity was first conjectured by Kontsevich and Soibelman in \cite[Conjecture 4.]{KS2008} as a key ingredient in the formulation of their motivic wall-crossing formula. The version stated below is a generalization that applies to oriented $(-1)$-shifted symplectic stacks, formulated in terms of the stacks of graded and filtered objects.

\begin{theorem}[{\cite[Theorem 4.2.2]{Bu2024AMI}}]\label{mii}
    Let $\dX$ be an oriented $(-1)$-shifted symplectic stack over $\mathbb{C}$ with classical truncation $\X=\dX_{\textup{cl}}$. Assume that $\X$ is Nisnevich locally fundamental. Consider the $(-1)$-shifted Lagrangian correspondence
    \[
    \Grad(\dX)\overset{\textup{gr}}{\leftarrow}\Filt(\dX)\overset{\textup{ev}_1}{\to}\dX
    \]
    Then we have the identity
    \[
    \textup{gr}_!\circ\textup{ev}_1^*(\nu_{\dX}^{\textup{mot}})=\mathbb{L}^{\textup{vdim}\Filt(\dX)/2}\nu_{\Grad(\dX)}^\textup{mot}
    \]
    in $\hat{\mathbb{M}}^{\textup{mon}}(\Grad(\X))$, where the $\textup{vdim}\Filt(\dX)$ denote the virtual dimension of $\Filt(\dX)$, seen as a function $\pi_0(\Filt(\dX))\cong\pi_0(\Grad(\dX))\to\mathbb{Z}$.
\end{theorem}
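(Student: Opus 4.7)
The plan is to reduce the identity to its critical-locus local model and then invoke the classical motivic integral identity of Kontsevich--Soibelman for vanishing cycles. Since both sides of the equation live in $\hat{\mathbb{M}}^{\textup{mon}}(\Grad(\X))$ and the formation of $\Grad$, $\Filt$, the motivic Behrend function, and the operations $\textup{gr}_!$, $\textup{ev}_1^*$ are all compatible with Nisnevich pullback, the first step is to use the hypothesis that $\X$ is Nisnevich locally fundamental to reduce to the case $\dX=[\dX_0/\mathrm{GL}_n]$ with $\dX_0$ a derived affine scheme. A further descent along the $\mathrm{GL}_n$-quotient and the functoriality properties of $\Grad$/$\Filt$ under smooth maps let us absorb the group factor, so we may work on an affine derived scheme model.

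Next I would apply the Brav--Bussi--Joyce Darboux theorem for $(-1)$-shifted symplectic derived schemes: after passing to a smooth cover, the pair $(\dX,\omega_\dX)$ is equivalent to the derived critical locus $\textbf{dCrit}(f)$ for a regular function $f:U\to\mathbb{A}^1$ on a smooth affine $U$, and the orientation $o_\dX$ corresponds to a trivialization of $K_U^{\otimes 2}|_{\textup{Crit}(f)^{\textup{red}}}$ up to a principal $\mu_2$-bundle. In this local picture, by \cite[Theorem~1.4.6]{HalpernLeistner2014} (or its derived analogue) $\Grad(\dX)$ and $\Filt(\dX)$ are the derived critical loci of $f|_{U^{\mathbb{G}_m}}$ and $f|_{U^+}$ for a suitable $\mathbb{G}_m$-action on $U$ that leaves $f$ invariant; here $U^{\mathbb{G}_m}$ is the fixed locus and $U^+$ its attractor, connected by the Bia{\l}ynicki-Birula diagram
\[
U^{\mathbb{G}_m}\xleftarrow{\;p\;} U^+\xrightarrow{\;j\;} U.
\]
Under the definition of the motivic Behrend function via $\Phi_f$ (Definition \ref{motivicBF}), the identity we must prove reduces, up to explicit powers of $\mathbb{L}^{1/2}$ and $\Upsilon$-classes, to the assertion
\[
p_!\,j^*\,\Phi_f([U])\;=\;\mathbb{L}^{(\dim U^+-\dim U)/2}\,\Phi_{f|_{U^{\mathbb{G}_m}}}([U^{\mathbb{G}_m}]),
\]
which is exactly the Kontsevich--Soibelman motivic integral identity for vanishing cycles, established in the motivic category by Nicaise--Payne and Davison.

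The orientation bookkeeping is then handled by observing that, under the Darboux model, the orientation $o_{\Grad(\dX)}$ coming from $\textup{tot}^\star o_\dX$ is compatible with the restriction of $K_U^{1/2}$ to $U^{\mathbb{G}_m}$ twisted by $\det(\mathbb{L}_{U^+/U^{\mathbb{G}_m}})^{1/2}$; the $\Upsilon$-classes appearing in the two sides of the identity match on the nose once one uses the compatibility of orientations established in \cite{JoyceUpmeier2021}, together with the multiplicativity $\Upsilon(P\otimes Q)=\Upsilon(P)\cdot\Upsilon(Q)$. Global gluing of the local identities is then a formal consequence of Nisnevich descent for $\hat{\mathbb{M}}^{\textup{mon}}$ and of the uniqueness clause in the characterization of $\nu^{\textup{mot}}$.

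The main obstacle is coherently tracking the orientation data across the Darboux charts and the graded/filtered comparisons: one must ensure that the canonical identifications $K_U^{1/2}|_{\textup{Crit}(f)}$ furnished by the Darboux theorem restrict to the orientations on $\Grad$ and $\Filt$ predicted by $\textup{tot}^\star o_\dX$, not merely up to a sign. Once this orientation comparison is pinned down (using, e.g., the compatibility of Joyce--Upmeier orientations with fixed-locus restriction), everything else is a direct consequence of the Kontsevich--Soibelman identity and the descent properties of the motivic Behrend function stated earlier in this section.
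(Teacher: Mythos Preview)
The paper does not give its own proof of this statement: Theorem~\ref{mii} is quoted verbatim from \cite[Theorem~4.2.2]{Bu2024AMI} and used as a black box in the proof of Theorem~\ref{homomorphism}. There is therefore no proof in the paper to compare your proposal against.

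That said, your outline is a reasonable high-level sketch of the strategy Bu actually uses in \cite{Bu2024AMI}: Nisnevich descent to fundamental quotient stacks, passage to a critical-chart local model, and reduction to the Kontsevich--Soibelman integral identity for motivic vanishing cycles, with the orientation data tracked via $\Upsilon$-classes. Two places where your sketch is too breezy relative to the actual argument: first, you cannot simply ``absorb the group factor'' and work on an affine derived scheme---the $\mathrm{GL}_n$-equivariance has to be carried along, and Bu works with $\mathbb{G}_m$-equivariant critical charts on the quotient stack rather than descending to a bare scheme. Second, the claim that $\Grad(\mathbf{dCrit}(f))$ and $\Filt(\mathbf{dCrit}(f))$ are themselves derived critical loci of $f|_{U^{\mathbb{G}_m}}$ and $f|_{U^+}$ is not a formal consequence of \cite{HalpernLeistner2014}; establishing the correct local model for $\Grad$ and $\Filt$ of a $(-1)$-shifted symplectic stack, together with the induced shifted symplectic and orientation structures, is one of the main technical contributions of \cite{Bu2024AMI} (see \S3 there), and you are treating it as known. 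The orientation comparison you flag as the ``main obstacle'' is indeed delicate, and Bu handles it by constructing the induced orientation on $\Grad(\dX)$ directly rather than by appealing to \cite{JoyceUpmeier2021}.
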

\begin{remark}
The proof of the motivic integral identity relies on explicit local computations and a gluing argument that uses Nisnevich-local fundamental covers. Consequently, the existence of such covers is essential. In our setting, we invoke Corollary \ref{maincoro} to guarantee the required Nisnevich-local structure.
\end{remark}
At some point we will need to compute the virtual dimension of the stack
$\Filt(\dX)$, so we record the following useful lemma of
Halpern-Leistner.

\begin{lemma}[{\cite[Lemma 1.2.3]{HalpernLeistner2014}}]\label{virtualdimensionformula}
    Let $\dX$ be a derived algebraic stack locally of finite presentation
    over $\mathbb{C}$. Then the tangent complex of $\Filt(\dX)$ is
    described in terms of $\mathbb{T}_\dX$ by
    \[
        \textup{sf}^*(\mathbb{T}_{\Filt(\dX)})
        \;\cong\;
        \textup{tot}^*(\mathbb{T}_\dX)^{\geq 0},
    \]
    where the superscript $(\geq 0)$ denotes the nonnegative weight part
    with respect to the natural $\mathbb{G}_m$-action.
\end{lemma}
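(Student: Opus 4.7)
The plan is to identify $\mathbb{T}_{\Filt(\dX)}$ with the derived pushforward of $F^{*}\mathbb{T}_\dX$ along the universal filtration, then pull everything back along $\textup{sf}$ and read off the weight-zero part of the resulting graded complex.

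First I would invoke the standard tangent complex formula for a derived mapping stack: if $F\colon[\mathbb{A}^1/\mathbb{G}_m]\times\Filt(\dX)\to\dX$ denotes the universal filtration and $p_\Filt\colon[\mathbb{A}^1/\mathbb{G}_m]\times\Filt(\dX)\to\Filt(\dX)$ the projection, then $\mathbb{T}_{\Filt(\dX)}\cong p_{\Filt,*}F^{*}\mathbb{T}_\dX$. By construction $\textup{sf}$ is the morphism on mapping stacks induced by $\textup{pr}\colon[\mathbb{A}^1/\mathbb{G}_m]\to[*/\mathbb{G}_m]$, so there is a tautological factorisation $F\circ(\textup{id}\times\textup{sf})=E\circ(\textup{pr}\times\textup{id})$, where $E\colon[*/\mathbb{G}_m]\times\Grad(\dX)\to\dX$ is the universal graded object (whose further restriction along the canonical atlas $*\to[*/\mathbb{G}_m]$ is $\textup{tot}$). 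Combining this with base change for the Cartesian square of $p_\Filt$ along $\textup{sf}$ and the projection formula gives
\[
\textup{sf}^{*}\mathbb{T}_{\Filt(\dX)}\;\cong\;p_{\Grad,*}^{[*/\mathbb{G}_m]}\bigl(E^{*}\mathbb{T}_\dX\otimes\textup{pr}_{*}\mathcal{O}_{[\mathbb{A}^1/\mathbb{G}_m]}\bigr),
\]
where $p_{\Grad}^{[*/\mathbb{G}_m]}\colon[*/\mathbb{G}_m]\times\Grad(\dX)\to\Grad(\dX)$ is the projection.

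Finally I would perform an explicit Koszul-type computation. The sheaf $\textup{pr}_{*}\mathcal{O}_{[\mathbb{A}^1/\mathbb{G}_m]}$ on $[*/\mathbb{G}_m]$ is the graded vector space $\mathbb{C}[t]$, where the coordinate $t$ has weight $-1$ under the paper's convention that $\mathbb{G}_m$ acts on $\mathbb{A}^1$ with weight $-1$. Writing $E^{*}\mathbb{T}_\dX=\bigoplus_{n}\mathcal{T}_n$ for the weight decomposition of the equivariant tangent complex on $\Grad(\dX)$, the summand $\mathcal{T}_n\otimes t^{k}$ has weight $n-k$, so pushing forward to $\Grad(\dX)$ (which amounts to taking $\mathbb{G}_m$-invariants) selects the pairs $(n,k)$ with $k=n\geq 0$. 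This yields
\[
\textup{sf}^{*}\mathbb{T}_{\Filt(\dX)}\;\cong\;\bigoplus_{n\geq 0}\mathcal{T}_n\;\cong\;\textup{tot}^{*}(\mathbb{T}_\dX)^{\geq 0},
\]
as required.

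The only delicate point is the weight bookkeeping: one must check that the sign conventions for the $\mathbb{G}_m$-action on $\mathbb{A}^1$, for the induced grading on $E^{*}\mathbb{T}_\dX$, and for the definition of the $(\geq 0)$ part are all mutually consistent. Once this is set up, the lemma is a formal consequence of the tangent-complex formula for a derived mapping stack, base change, and the projection formula.
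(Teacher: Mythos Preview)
The paper does not prove this lemma at all: it is simply quoted as \cite[Lemma~1.2.3]{HalpernLeistner2014} and used as a black box in the computation of the virtual dimension in the proof of Theorem~\ref{homomorphism}. So there is no ``paper's own proof'' to compare against.

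That said, your sketch is essentially the standard argument one finds in Halpern-Leistner's paper: express $\mathbb{T}_{\Filt(\dX)}$ as the pushforward of the pullback of $\mathbb{T}_\dX$ along the universal map, use base change along $\textup{sf}$ together with the projection formula, and then compute $\mathbb{G}_m$-invariants of $E^*\mathbb{T}_\dX \otimes \mathbb{C}[t]$ weight by weight. The only point worth double-checking, as you yourself flag, is the sign convention on the $\mathbb{G}_m$-weight of $t$ versus the meaning of ``$(\geq 0)$ part''; with the paper's convention that $\mathbb{G}_m$ acts on $\mathbb{A}^1$ with weight $-1$, your bookkeeping is consistent and the argument goes through.
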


% !TeX root = ../main.tex

\section{Motivic Donaldson--Thomas invariants}
\subsection{Donaldson--Thomas type invariants}
Fix a numerical type $v=(-n,-\beta,1)\in\Gamma^1$. We now define motivic DT and PT invariants, respectively. Recall that there is an induced orientation and d-critical structure on the stack $\M^{ss}$. Thus motivic Behrend functions are well-defined on both $\M_{\text{DT}}^{v}$ and $\M_{\text{PT}}^{v}$.
\begin{definition}\label{Definitionofdtinvariants}
    We define
    \[
    \mathrm{DT}^{\mathrm{mot}}_{n,\beta}(X)=\int_{\mathcal{M}_{\mathrm{DT}}^v}(\L^{\frac{1}{2}}-\L^{-\frac{1}{2}})\nu_{\mathcal{M}_{\mathrm{DT}}^v}^\mathrm{mot}\in\hat{\mathbb{M}}^{\mathrm{mon}}(\mathbb{C})
    \]
    \[
        \mathrm{PT}^{\mathrm{mot}}_{n,\beta}(X)=\int_{\mathcal{M}_{\mathrm{PT}}^v}(\L^{\frac{1}{2}}-\L^{-\frac{1}{2}})\nu_{\mathcal{M}_{\mathrm{PT}}^v}^\mathrm{mot}\in\hat{\mathbb{M}}^{\mathrm{mon}}(\mathbb{C}).
    \]
\end{definition}
\begin{remark}
Applying the Euler characteristic morphism recovers the numerical DT and PT invariants from their motivic counterparts. Indeed, by Theorem~\ref{M=I/G} we have 
\[
\M_{\mathrm{DT}}^v \cong [I_n(X,\beta)/\mathbb{G}_m].
\]
Let $\pi : I_n(X,\beta) \to [I_n(X,\beta)/\mathbb{G}_m]$ be the quotient map. 
This is a smooth morphism of relative dimension $1$, hence
\[
\pi^*\left(\nu_{\M_{\mathrm{DT}}^v}^{\mathrm{mot}}\right)
  = \L^{1/2}\cdot \nu_{I_n(X,\beta)}^{\mathrm{mot}}.
\]

Since $\pi$ is a principal $\mathbb{G}_m$-bundle, for any class 
$[Y \to \M_{\mathrm{DT}}^v]$ we have the motivic identity
\[
\int_{I_n(X,\beta)} \pi^*[Y\to\M_{\text{DT}}^v]
   = (\L-1)\int_{\M_{\mathrm{DT}}^v} [Y\to\M_{\text{DT}}^v].
\]
Combining the two relations, we obtain
\[
\mathrm{DT}^{\mathrm{mot}}_{n,\beta}(X)
   = \int_{\M_{\mathrm{DT}}^v}
      (\L^{1/2}-\L^{-1/2})\,
      \nu_{\M_{\mathrm{DT}}^v}^{\mathrm{mot}}
   = \int_{I_n(X,\beta)}
      \nu_{I_n(X,\beta)}^{\mathrm{mot}}.
\]
Applying Euler characteristic morphism we obtain
\[
\chi\left(\mathrm{DT}^{\mathrm{mot}}_{n,\beta}(X)\right)=\int_{I_n(X,\beta)}\nu_{I_n(X,\beta)}=\mathrm{DT}_{n,\beta}(X).
\]
\end{remark}

\begin{theorem}\label{main}
    We have the following identity: 
    \[
    \sum_{n,\beta}\mathrm{DT}^{\mathrm{mot}}_{n,\beta}(X)q^nx^\beta=\left(\sum_{n\geq0}\mathrm{DT}^{\mathrm{mot}}_{n,0}(X)q^n\right)\left(\sum_{n,\beta}\mathrm{PT}^{\mathrm{mot}}_{n,\beta}(X)q^nx^\beta\right)
    \]
    in $\hat{\mathbb{M}}^{\mathrm{mon}}(\mathbb{C})[[\Gamma^1]]$.
\end{theorem}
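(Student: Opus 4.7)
The plan is to adapt Toda's wall-crossing proof of the numerical DT/PT correspondence (\cite{Toda2010,toda2020hall}) to the motivic setting, with Bu's motivic integral identity (Theorem~\ref{mii}) replacing the classical integral identity used there. The proof proceeds in three steps: extract Toda's Hall-algebra identity on $\M^{ss}$, define a motivic integration map, and show the latter is a ring homomorphism so that the Hall identity transports to the desired motivic equality.

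The first step is to import Toda's Hall-algebra identity. On the wall, a rank-one semistable object $E\in\mathcal{A}_X$ of class $v\in\Gamma^1$ sits in a canonical short exact sequence $0\to E'\to E\to E''\to 0$ in $\mathcal{A}_X$ with $E'$ of PT type and $E''$ of class in $\Gamma^0$; conversely any such extension is semistable by Lemma~\ref{directsumofwithpoint}. This yields, in the motivic Hall algebra $\mathcal{H}(\M^{ss})$ of finite-type stacks over $\M^{ss}$ with Hall convolution $\star$, the identity
\[
\sum_{v\in\Gamma^1}q^n x^\beta\,[\M^v_{\textup{DT}}\hookrightarrow\M^{ss}]\;=\;\Big(\sum_{m\geq0}q^m\,[\M^{ss,(-m,0,0)}\hookrightarrow\M^{ss}]\Big)\star\Big(\sum_{v\in\Gamma^1}q^n x^\beta\,[\M^v_{\textup{PT}}\hookrightarrow\M^{ss}]\Big).
\]
The convolution $\star$ is implemented by the correspondence $\M^{ss}\times\M^{ss}\xleftarrow{\textup{gr}}\Filt^{(2)}(\M^{ss})\xrightarrow{\textup{ev}_1}\M^{ss}$, where $\Filt^{(2)}(\M^{ss})$ denotes the open-closed union of weight components of $\Filt(\M^{ss})$ with exactly two nontrivial graded pieces.

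The second step is to introduce the motivic integration map
\[
I:\mathcal{H}(\M^{ss})\longrightarrow\hat{\mathbb{M}}^{\textup{mon}}(\mathbb{C})[[\Gamma^1\cup\Gamma^0]],\qquad [f:\mathcal{Y}\to\M^{ss,v}]\longmapsto q^n x^\beta\int_\mathcal{Y}(\L^{\frac{1}{2}}-\L^{-\frac{1}{2}})\,f^\ast\nu^{\textup{mot}}_{\M^{ss}},
\]
and to prove that $I$ is a ring homomorphism, with the source carrying $\star$ and the target the ordinary commutative product. This is the crucial new input. By base change and the projection formula, multiplicativity on a pair of classes reduces to the identity
\[
\textup{gr}_!\,\textup{ev}_1^\ast\nu^{\textup{mot}}_{\M^{ss}}=\L^{\textup{vdim}\,\Filt(\M^{ss})/2}\,\nu^{\textup{mot}}_{\Grad(\M^{ss})}
\]
on $\Grad(\M^{ss})$, which is exactly Bu's theorem (Theorem~\ref{mii}) applied to the oriented $(-1)$-shifted symplectic stack $\M^{ss}$; the Nisnevich-locally fundamental hypothesis is supplied by Corollary~\ref{maincoro} via the good moduli space of Theorem~\ref{ssgoodmodulispace}. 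Restricting to the two-weight component $\M^{ss}\times\M^{ss}\subset\Grad(\M^{ss})$ and invoking Thom--Sebastiani (Theorem~\ref{TStheorem}) identifies the right-hand side with $\nu^{\textup{mot}}_{\M^{ss}}\boxtimes\nu^{\textup{mot}}_{\M^{ss}}$ up to a power of $\L$; the exponent is computed via Lemma~\ref{virtualdimensionformula} from the Euler pairing on $K(\mathcal{A}_X)$ and cancels against the $(\L^{\frac12}-\L^{-\frac12})$-twist contributed by the extra diagonal $\mathbb{G}_m$-stabilizer in a two-step extension.

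The third step is to apply $I$ to the Hall identity. By Theorem~\ref{M=I/G}, $\M^v_{\textup{DT}}=[I_n(X,\beta)/\mathbb{G}_m]$ and $\M^v_{\textup{PT}}=[P_n(X,\beta)/\mathbb{G}_m]$ are trivial $\mathbb{G}_m$-gerbes, and the twist $(\L^{\frac12}-\L^{-\frac12})$ extracts from $I$ exactly the motivic invariants of Definition~\ref{Definitionofdtinvariants}. On $\Gamma^0$, objects are automatically $\sigma_0$-semistable, so $I$ restricted to that part yields $\sum_{n\geq 0}\textup{DT}^{\textup{mot}}_{n,0}(X)q^n$. Applying $I$ to both sides of the Hall-algebra identity produces the equality stated in Theorem~\ref{main}.

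The hardest part will be the ring-homomorphism property of $I$. Bu's identity is naturally formulated for the full $\Filt(\M^{ss})$, while the Hall convolution only involves the two-step component with prescribed graded classes, so one must verify that the identity restricts compatibly with base change and that all $\L$-twists balance correctly against the Euler-pairing dimensions produced by Lemma~\ref{virtualdimensionformula}. A further subtle point is to confirm that the induced multiplication on the target is the ordinary commutative product rather than a quantum-torus deformation; at the DT/PT wall this comes down to the vanishing of the skew-symmetrized Euler pairing between classes in $\Gamma^0$ and $\Gamma^1$, a fact implicit in Toda's numerical argument that must be tracked precisely through the motivic refinement.
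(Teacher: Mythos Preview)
Your overall architecture is right---Toda's Hall-algebra wall-crossing identity, a motivic integration map, and Bu's integral identity (Theorem~\ref{mii}) applied on $\M^{ss}$ via Corollary~\ref{maincoro}---but two linked errors make the argument as written fail.

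First, the Hall-algebra identity you invoke is not Toda's. You claim a direct factorization $\delta_{\mathrm{DT}}=\delta_\infty\star\delta_{\mathrm{PT}}$, asserting that every DT object carries a canonical two-step filtration with zero-dimensional sub and PT quotient (your own description of the short exact sequence, with $E'$ of PT type as sub and $E''\in\Gamma^0$ as quotient, is already inconsistent with the formula you then write). In fact no such identity holds: the quotient of a DT object by its maximal zero-dimensional subobject in $\mathcal{A}_X$ is a PT pair with \emph{surjective} section, which is a proper substack of $\M_{\mathrm{PT}}$. What Toda proves, and what the paper imports as Lemma~\ref{DTPTwallcrossing}, is the \emph{conjugation} identity $\delta_{\mathrm{DT}}*\delta_\infty=\delta_\infty*\delta_{\mathrm{PT}}$, i.e.\ $\delta_{\mathrm{DT}}=\delta_\infty*\delta_{\mathrm{PT}}*\delta_\infty^{-1}$.

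Second, and this is why the conjugation form is essential, your claim that the skew Euler pairing between $\Gamma^0$ and $\Gamma^1$ vanishes is false: one has $\chi((-n,0,0),v)=n$ for $v\in\Gamma^1$. Hence the integration map $I$ does \emph{not} take the Hall product to the ordinary commutative product; it takes it to the quantum-torus product $x^{v_0}*x^v=\L^{\chi(v_0,v)/2}x^{v_0+v}$ (this is exactly the content of Theorem~\ref{homomorphism}). The $\L$-power coming from Lemma~\ref{virtualdimensionformula} does not ``cancel against the $(\L^{1/2}-\L^{-1/2})$-twist'' of the extra $\mathbb{G}_m$; it \emph{is} the twist $\L^{\chi/2}$. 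The passage from the twisted to the commutative product is the real content of the proof: one rewrites $\delta_{\mathrm{DT}}=\exp(\mathrm{ad}_{\epsilon_\infty})(\delta_{\mathrm{PT}})$ with $\epsilon_\infty=\log\delta_\infty$, applies $\hat I$, uses $x^{(-n,0,0)}*x^v-x^v*x^{(-n,0,0)}=(\L^{n/2}-\L^{-n/2})\,x^{(-n,0,0)}\cdot x^v$ to turn the adjoint action into multiplication by an exponential for the \emph{untwisted} product, and finally identifies that exponential with $\sum_{n\ge0}\mathrm{DT}^{\mathrm{mot}}_{n,0}q^n$ by comparing the $\beta=0$ terms.
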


\subsection{Motivic Hall Algebra}
We recall the construction of the motivic Hall algebra, originally defined by Joyce in \cite{Joyce2006confiI,Joyce2007confiII}, and further developed in recent formulations by Bu, Kinjo, and Núñez \cite{Bu2025OrthosymplecticDT,IntrinsicDTII}. See also \cite{Bridgeland2010} for a nice introduction. Following \cite[Section 3]{toda2020hall}, we define the following Hall algebra.
Define
\[
\mathcal{H}(\A_X)=\bigoplus_{v\in\Gamma^1\cup\Gamma^0}\mathbb{M}(\O bj^v(\A_X)).,
\]
as a $\mathbb{M}(\mathbb{C})$-module. 
We consider the $\mathbb{M}(\mathbb{C})$-submodule 
\[
\mathcal{H}^0:=\bigoplus_{v\in\Gamma^0}\mathbb{M}(\O bj^v(\A_X)).
\]
\noindent The $\mathbb{M}(\mathbb{C})$-module $\mathcal{H}(\A_X)$ carries a  $\mathcal{H}^0$-bimodule structure defined as follows:

    Given elements$a=[\X\overset{f}{\to}\mathcal{O}bj^v(\mathcal{A}_X)]\in\mathcal{H}(\A_X)$ and $b=[\Y\overset{g}{\to}\mathcal{O}bj^w(\mathcal{A}_X)]\in\mathcal{H}^0$. Consider the following diagram
    \begin{equation}\label{diag:star}
  \begin{tikzcd}[row sep=large,column sep=large]
    \mathcal{Z} \arrow[r] \arrow[d] & 
    \mathcal{E}x^{(v,w)}(\mathcal{A}_X) \arrow[r] \arrow[d] &
    \mathcal{O}bj^{v+w}(\mathcal{A}_X) \\
    \X\times \Y \arrow[r,"{(f,g)}"] &
    \mathcal{O}bj^v(\mathcal{A}_X)\times\mathcal{O}bj^{w}(\mathcal{A}_X)
  \end{tikzcd}
  \tag{$*$}
\end{equation}

where $\mathcal{E}x^{(v,w)}(\A_X)$ is the stack which parametrizes exact sequences 
    \[
    \begin{tikzcd}
	0 & E & F & G & 0
	\arrow[from=1-1, to=1-2]
	\arrow[from=1-2, to=1-3]
	\arrow[from=1-3, to=1-4]
	\arrow[from=1-4, to=1-5]
    \end{tikzcd}
    \]
    where $E,G$ are objects in $\A_X$ such that $\mathrm{cl}(E)\in\Gamma^1$, $\mathrm{cl}(G)\in\Gamma^0$.  
    The morphism 
    \[
    \mathcal{E}x^{(v,w)}(\A_X)\to \O bj^v(\A_X)\times \O bj^w(\A_X)
    \]
    is given by sending an exact sequence as above to the pair $(E,G)$ and the morphism $\mathcal{E}x^{(v,w)}(\A_X)\to\O bj^{v+w}(\A_X)$ sends the above exact sequence to $F$. We define the product as
\[
a*b=[\X \xrightarrow{f} \O bj^v(\mathcal{A}_X)] * [\Y \xrightarrow{g} \O bj^{w}(\mathcal{A}_X)] := [\mathcal{Z} \xrightarrow{h} \O bj^{v+w}(\mathcal{A}_X)].
\]
The product $b*a$ is defined in the same way. We call this the Hall algebra product.
It has been shown that this process can be formulated using stacks of graded and filtered objects via the following pullback diagram.
\begin{lemma}[{\cite[Proposition 8.26]{KinjoParkSafronov2024}}]There is a pullback diagram such that the horizontal morphisms are open immersions
\[\begin{tikzcd}
	\mathcal{E}x^{(v,w)}(\O bj(\A_X)) & \mathrm{Filt}(\O bj(\A_X)) \\
	\O bj^v(\A_X)\times \O bj^w(\A_X) & \mathrm{Grad}(\O bj(\A_X))
	\arrow[from=1-1, to=1-2]
	\arrow[from=1-1, to=2-1]
	\arrow[from=1-2, to=2-2]
	\arrow["{\widetilde{\Phi}_2}", from=2-1, to=2-2]
\end{tikzcd},
\]
where the morphism $\widetilde{\Phi}_2$ is given by inclusion of graded objects in weights $0$ and $-1$.
\end{lemma}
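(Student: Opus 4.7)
The plan is to identify both stacks in the top row by concrete descriptions of $\Filt$ and $\Grad$ and then use the fact that fixing numerical data cuts out a union of connected components of $\Grad$.

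First, I would invoke the Tannakian/Rees-type description of $\Filt$ and $\Grad$ for moduli stacks of objects in the heart of a $t$-structure (following \cite{HalpernLeistner2014}). Since $\A_X$ is the heart of a bounded $t$-structure on $\mathcal{D}_X$ and is well behaved in the sense used in Proposition \ref{MAisStheta}, the $S$-points of $\Grad(\O bj(\A_X))$ can be identified with $\mathbb{Z}$-graded objects $\bigoplus_{i\in\mathbb{Z}} E_i$ in $\A_X$ (with only finitely many nonzero summands), and the $S$-points of $\Filt(\O bj(\A_X))$ with bounded $\mathbb{Z}$-indexed increasing filtrations $(E_{i-1} \subset E_i)$ such that $E_i = 0$ for $i \ll 0$ and $E_i = E$ for $i \gg 0$. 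Under these identifications, $\mathrm{gr}$ sends such a filtration to the associated graded $\bigoplus_i E_i/E_{i-1}$, and $\widetilde{\Phi}_2(E,G)$ is the graded object with $E$ in weight $0$ and $G$ in weight $1$.

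Next, I would verify that $\widetilde{\Phi}_2$ is an open immersion. Because the numerical class is locally constant in families, $\Grad(\O bj(\A_X))$ decomposes as a disjoint union $\bigsqcup_{(v_i)} \prod_i \O bj^{v_i}(\A_X)$, indexed by sequences $(v_i)_{i\in\mathbb{Z}}$ with only finitely many nonzero $v_i \in \Gamma$. Under $\widetilde{\Phi}_2$, the product $\O bj^v(\A_X) \times \O bj^w(\A_X)$ is precisely the component with $v_0 = v$, $v_1 = w$ and all other $v_i = 0$; this is open (in fact clopen) in $\Grad(\O bj(\A_X))$. Now I compute the pullback along $\mathrm{gr}$: an $S$-point of the pullback is a filtration $(E_i)$ whose associated graded is concentrated in weights $0$ and $1$ with $[E_0/E_{-1}] = v$ and $[E_1/E_0] = w$. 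Such a filtration collapses to a single nontrivial inclusion $E_0 \subset E_1 = E$, which is exactly the datum of a short exact sequence $0 \to E_0 \to E \to E/E_0 \to 0$ in $\A_X$ with $[E_0] = v$ and $[E/E_0] = w$. This is precisely the data classified by $\mathcal{E}x^{(v,w)}(\O bj(\A_X))$. Hence the top horizontal arrow is the base change of an open immersion and is itself an open immersion, and the square commutes tautologically since both composites send a short exact sequence to its associated graded.

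The main obstacle is step one: the concrete identification of $\Filt(\O bj(\A_X))$ and $\Grad(\O bj(\A_X))$ with moduli of filtered and graded objects in $\A_X$. This requires that a $\Theta$-point of $\O bj(\A_X)$ actually produces a $\mathbb{Z}$-filtration whose graded pieces lie in $\A_X$ and not merely in the ambient derived category $\mathcal{D}_X$, which in turn relies on the noetherian, boundedness, and generic flatness properties of $\A_X$ recorded in the proof of Proposition \ref{MAisStheta}. Once this identification is in place, the openness of the component $\O bj^v(\A_X)\times \O bj^w(\A_X)\hookrightarrow \Grad(\O bj(\A_X))$, the base-change formation of the pullback, and the combinatorial observation that a two-step filtration is the same datum as a short exact sequence are all essentially formal, and yield the claimed pullback square with horizontal open immersions.
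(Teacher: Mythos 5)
Your proposal is mathematically sound but follows a genuinely different route than the paper's proof. The paper proceeds \emph{top-down}: it cites \cite[Proposition~8.26]{KinjoParkSafronov2024}, which establishes the pullback square at the level of the linear stack of perfect complexes with trivial determinant and the poset $P=\{0<1<2\}$, and then produces the desired diagram by a chain of three open restrictions — first to the universally gluable locus $\M_0$, then to the open substack $\O bj(\A_X)$, then to the individual numerical components indexed by $v$ and $w$. Openness and the pullback property are preserved at each step because all the restrictions are along open immersions, so essentially all the work is outsourced to the cited proposition. Your proof is \emph{bottom-up}: you identify $\Filt$ and $\Grad$ of $\O bj(\A_X)$ explicitly via the Rees/Tannakian description, decompose $\Grad(\O bj(\A_X))$ into clopen components by numerical type, recognize $\widetilde{\Phi}_2$ as an isomorphism onto one such component, and compute the pullback directly as two-step filtrations, i.e.\ short exact sequences. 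What the paper's approach buys is brevity and the avoidance of any claim about the concrete description of $\Filt/\Grad$ for the heart $\A_X$; what your approach buys is self-containedness and transparency of the combinatorics. Your approach does carry a genuine burden that the paper's sidesteps — namely the identification of $\Theta_S$-points of $\O bj(\A_X)$ with filtrations \emph{inside} $\A_X$ — but you correctly flag this and correctly observe that for the two-step filtrations appearing here the intermediate term $E_0$ lands in $\A_X$ automatically from the triangle $E_0\to F\to G$ with $E_0$, $G$, $F$ all in the heart. So the sketch is correct, and the one place where it leans on unproven structure is the same place where the paper leans on the citation.
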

\begin{proof}
    Let $\mathcal{C}$ be the perfect complexes on $X$ with trivial determinant and take the poset $P=\{0<1<2\}$. First apply \cite[Proposition 8.26]{KinjoParkSafronov2024} to $\mathcal{C}$ and $P$, and then restrict the resulting diagram to the open substack $\M_0$ of universally gluable objects,  this gives an open immersion of diagrams.
    \[\begin{tikzcd}
	\M_0^{2-\mathrm{filt}} & \mathrm{Filt}(\M_0) \\
	\M_0\times\M_0 & \mathrm{Grad}(\M_0)
	\arrow[from=1-1, to=1-2]
	\arrow[from=1-1, to=2-1]
	\arrow[from=1-2, to=2-2]
	\arrow["{\widetilde{\Phi}_2}", from=2-1, to=2-2]
    \end{tikzcd},
    \]
    where $\M_0^{2-\mathrm{filt}}$ denotes the moduli stack parametrizes pairs $E\subset F\in\M_0$. Further restricting on the open substack $\O bj(\A_X)$ we obtain
    \[\begin{tikzcd}
	\mathcal{E}x(\O bj(\A_X)) & \mathrm{Filt}(\O bj(\A_X)) \\
	\O bj(\A_X)\times \O bj(\A_X) & \mathrm{Grad}(\O bj(\A_X))
	\arrow[from=1-1, to=1-2]
	\arrow[from=1-1, to=2-1]
	\arrow[from=1-2, to=2-2]
	\arrow["{\widetilde{\Phi}_2}", from=2-1, to=2-2]
    \end{tikzcd}.
    \]
    The results then follows from the restriction
    \[\begin{tikzcd}
	\mathcal{E}x^{(v,w)}(\O bj(\A_X)) & \mathcal{E}x(\O bj(\A_X)) \\
	\O bj^v(\A_X)\times \O bj^w(\A_X) & \O bj(\A_X)\times \O bj(\A_X)
	\arrow[from=1-1, to=1-2]
	\arrow[from=1-1, to=2-1]
	\arrow[from=1-2, to=2-2]
	\arrow[from=2-1, to=2-2]
    \end{tikzcd}.
    \]
\end{proof}
\noindent On the other hand, we consider the following abelian group
\[
\hat{\mathbb{M}}^{\mathrm{mon}}(\mathbb{C})[\Gamma]:=\bigoplus_{v\in\Gamma}\hat{\mathbb{M}}^{\mathrm{mon}}(\mathbb{C})\cdot x^v
\]
where $x^v$s are formal variables. We define $*$-product on this abelian group as follows, for $x^v$ and $x^w$ we define
\[
x^v * x^{w} := \L^{\chi(v, w)/2} x^{v + w}.
\]
Extend $\hat{\mathbb{M}}^{\mathrm{mon}}$-linearly for general elements, i.e. 
\[
\sum_v a_v x^v * \sum_w b_w x^w:=\sum_{u} \left(\sum_{v+w=u}\L^{\chi(v,w)/2}a_v b_w\right) x^{u}
\]
We will also consider the usual product $\cdot$ on $\hat{\mathbb{M}}^{\mathrm{mon}}(\mathbb{C})[\Gamma]$ i.e. 
$x^v\cdot x^w:= x^{v+w}.$

\subsection{Motivic Integration Map}
The integration map was used in the proof of the numerical DT/PT correspondence, as in \cite{Bridgeland2011}. We now aim to upgrade this integration map to the motivic level. We define the following map:
\begin{align*}
    I: \mathcal{H}(\A_X) &\longrightarrow \hat{\mathbb{M} }^{\mathrm{mon}}(\mathbb{C})[\Gamma], \\
    [\X \xrightarrow{f} \mathcal{O} bj^v(\mathcal{A}_X)] &\mapsto \left( \int_\X f^* \nu^{\mathrm{mot}}_{\mathcal{O} bj(\mathcal{A}_X)} \right) x^v,
\end{align*}
and extended $\hat{\mathbb{M} }^{\mathrm{mon}}(\mathbb{C})$-linearly.

As in the numerical DT/PT correspondence
\cite{Bridgeland2011, Toda2010}, one would like the motivic integration
map to be compatible with the Hall algebra product.  
To establish such a compatibility at the motivic level, the essential
ingredient is Bu's motivic integral identity.  

However, to apply the motivic integral identity, one needs the stack
$\mathcal{O}bj^{v}(\mathcal{A}_X)$ to admit a Nisnevich locally
fundamental cover.  This is not known in general, and therefore the
standard approach does not immediately apply in our situation.  

On the other hand, thanks to the existence of good moduli spaces for
$\mathcal{M}^{ss}$ and the structure theorem for Artin stacks admitting
good moduli spaces, the semistable locus $\mathcal{M}^{ss}$ does
admit a Nisnevich locally fundamental cover.  This allows us to establish
the motivic integral identity on $\mathcal{M}^{ss}$.

Consequently, from now on we restrict to the $\mathcal{H}^0$-submodule
$\mathcal{H}^{ss}\subset\mathcal{H}(\A_X)$ generated by elements 
\[
a=[\mathcal{X}\xrightarrow{f}\mathcal{O}bj^{v}(\mathcal{A}_X)]
\in \mathcal{H}(\mathcal{A}_X)
\]
such that the morphism $f$ factors through the semistable locus
$\mathcal{M}^{ss,v}$ for some class $v\in\Gamma^0\cup\Gamma^1$.

%\begin{remark}
%Let $\hat{\mathbb{M} }^{\mathrm{mon}}_{\mathrm{reg}}(\mathbb{C}) \subset \hat{\mathbb{M} }^{\mathrm{mon}}(\mathbb{C})$ denote the subring generated by varieties and the elements
%\[
%\left\{ \L^{\pm 1}, (1 + \L + \L^2 + \cdots + \L^k)^{-1} \mid k \geq 1 \right\}.\]
%Then there is an Euler characteristic map
%\[   \hat{\mathbb{M} }^{\mathrm{mon}}_{\mathrm{reg}}(\mathbb{C})[\Gamma] \to \mathbb{Q}[\Gamma],\]
%defined by $a \cdot x^v \mapsto \chi(a) \cdot x^v$.
%\end{remark}
\begin{theorem}\label{homomorphism}
The restriction map $I:\mathcal{H}^{ss}\to\hat{\mathbb{M} }^{\mathrm{mon}}(\mathbb{C})[\Gamma]$, is an $\mathcal{H}^0$-module homomorphism. In other words, for any $a\in\mathcal{H}^{ss}$, $b\in\mathcal{H}^0$ we have
\[
I(a)*I(b)=I(a*b),\qquad I(b)*I(a)=I(b*a)
\]
\end{theorem}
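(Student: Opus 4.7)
The plan is to expand $I(a*b)$ and $I(a)*I(b)$ explicitly and reduce their comparison to Bu's motivic integral identity (Theorem~\ref{mii}) applied to $\M^{ss}$, combined with base change and Thom--Sebastiani for motivic Behrend functions. I treat the case $I(a*b)=I(a)*I(b)$; the statement for $b*a$ is entirely symmetric. Write $a=[\X\xrightarrow{f}\O bj^v(\A_X)]$ with $f$ factoring through $\M^{ss,v}$ and $b=[\Y\xrightarrow{g}\O bj^w(\A_X)]$. Since $w\in\Gamma^0$, Lemma~\ref{directsumofwithpoint} implies that every extension of a $\sigma_0$-semistable rank one object by a zero-dimensional sheaf is again $\sigma_0$-semistable, so the morphism $h:\mathcal{Z}\to\O bj^{v+w}(\A_X)$ representing $a*b$ factors through $\M^{ss,v+w}$. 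All subsequent computations therefore take place on the semistable locus $\M^{ss}$, which is Nisnevich locally fundamental by Corollary~\ref{maincoro}---precisely the hypothesis required to invoke Theorem~\ref{mii}.

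From the preceding lemma, $\mathcal{E}x^{(v,w)}(\A_X)$ sits as an open substack $j:\mathcal{E}x^{(v,w)}\hookrightarrow\Filt(\M^{ss})$, and $\M^{ss,v}\times\M^{ss,w}$ as an open substack $\widetilde{\Phi}_2:\M^{ss,v}\times\M^{ss,w}\hookrightarrow\Grad(\M^{ss})$. One then has $\mathcal{Z}=(\X\times\Y)\times_{\M^{ss,v}\times\M^{ss,w}}\mathcal{E}x^{(v,w)}$, with projections $p:\mathcal{Z}\to\mathcal{E}x^{(v,w)}$ and $q:\mathcal{Z}\to\X\times\Y$, and $h=\textup{ev}_1\circ j\circ p$. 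Base change around the square gives
\[
q_!\,p^*j^*\textup{ev}_1^*\nu_{\M^{ss}}^{\mathrm{mot}}\;=\;(f,g)^*\,\widetilde{\Phi}_2^*\,\textup{gr}_!\,\textup{ev}_1^*\nu_{\M^{ss}}^{\mathrm{mot}}.
\]
Theorem~\ref{mii} identifies $\textup{gr}_!\,\textup{ev}_1^*\nu_{\M^{ss}}^{\mathrm{mot}}$ with $\mathbb{L}^{d/2}\nu_{\Grad(\M^{ss})}^{\mathrm{mot}}$, where $d$ denotes the virtual dimension of $\Filt(\M^{ss})$ on the component indexed by $(v,w)$. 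Because $\widetilde{\Phi}_2$ is an open immersion into the two-step graded locus and the oriented $(-1)$-shifted symplectic structure on $\Grad(\dX)$ is induced from $\dX$ via $\textup{tot}$, the direct-sum compatibilities of Theorems~\ref{orientationdata} and~\ref{sss}, together with Thom--Sebastiani (Theorem~\ref{TStheorem}), yield $\widetilde{\Phi}_2^*\nu_{\Grad(\M^{ss})}^{\mathrm{mot}}=\nu_{\M^{ss,v}}^{\mathrm{mot}}\boxtimes\nu_{\M^{ss,w}}^{\mathrm{mot}}$. Integrating and applying Fubini gives
\[
I(a*b)\;=\;\mathbb{L}^{d/2}\,\Bigl(\int_{\X}f^*\nu_{\M^{ss,v}}^{\mathrm{mot}}\Bigr)\Bigl(\int_{\Y}g^*\nu_{\M^{ss,w}}^{\mathrm{mot}}\Bigr)\,x^{v+w},
\]
which matches $I(a)*I(b)=\mathbb{L}^{\chi(v,w)/2}\,I(a)\cdot I(b)$ (forgetting the formal variables) precisely when $d=\chi(v,w)$.

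The remaining verification is the virtual dimension computation. By Lemma~\ref{virtualdimensionformula}, at a point with graded pieces $(E,G)$ of classes $(v,w)$, the tangent complex of $\Filt(\M^{ss})$ is the non-negative weight part of $\textup{RHom}(E\oplus G,E\oplus G)[1]$, which splits as $\textup{RHom}(E,E)[1]\oplus\textup{RHom}(G,G)[1]\oplus\textup{RHom}(G,E)[1]$ under the standard weight convention. Its rank equals $-\chi(v,v)-\chi(w,w)-\chi(w,v)$, and the Calabi--Yau vanishings $\chi(v,v)=\chi(w,w)=0$ together with Serre duality $\chi(w,v)=-\chi(v,w)$ yield $d=\chi(v,w)$ as required. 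The main obstacle, both conceptually and technically, lies in the appeal to Theorem~\ref{mii}: it demands a Nisnevich locally fundamental cover of the target, which is exactly why we had to restrict to $\mathcal{H}^{ss}$ and establish the good moduli space in Theorem~\ref{ssgoodmodulispace}. Once this local structure is available, the remaining ingredients---base change, Thom--Sebastiani, and the virtual dimension bookkeeping---are essentially formal.
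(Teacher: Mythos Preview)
Your proposal is correct and follows essentially the same route as the paper: restrict to $\M^{ss}$ via Lemma~\ref{directsumofwithpoint}, invoke Bu's motivic integral identity (enabled by Corollary~\ref{maincoro}), apply Thom--Sebastiani together with the direct-sum compatibility of orientations (Theorems~\ref{orientationdata}, \ref{sss}), and finish with the virtual dimension computation from Lemma~\ref{virtualdimensionformula}. One small point to watch: the paper works with the \emph{traceless} tangent complex on $\M_0$ (the fiber of $\det$), and under its weight convention the off-diagonal piece sitting in nonnegative weight is $R\Hom(E,G)$ rather than $R\Hom(G,E)$, yielding $d=-\chi(v,w)$; you should reconcile this sign with your convention, though the traceless correction itself contributes nothing since $\chi(\O_X)=0$ on a Calabi--Yau threefold.
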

\begin{proof}To show \(I(a)*I(b)=I(a*b)\), for each cartesian diagram \eqref{diag:star} it suffices to prove
\[
    \int_{\mathcal{Z}} h^{*}\,\nu^{mot}_{\O bj(\mathcal{A}_X)}
    \;=\;
    \L^{\chi(v,v_0)/2}\,
    \Bigl(\int_{\mathcal{X}} f^{*}\,\nu^{mot}_{\O bj(\mathcal{A}_X)}\Bigr)\,
    \Bigl(\int_{\mathcal{Y}} g^{*}\,\nu^{mot}_{\O bj(\mathcal{A}_X)}\Bigr).
\]
Consider the diagram
\[
\begin{tikzcd} \mathcal{Z} & \mathcal{E}x^{(v,v_0)}(\mathcal{A}_X) & \mathrm{Filt}(\mathcal{O} bj(\mathcal{A}_X)) & \mathcal{O} bj(\mathcal{A}_X) \\ \X \times \Y & \mathcal{O} bj^v(\mathcal{A}_X) \times \mathcal{O} bj^{v_0}(\mathcal{A}_X) & \mathrm{Grad}(\mathcal{O} bj(\mathcal{A}_X)) \arrow[from=1-1, to=1-2] \arrow["h", curve={height=-20pt}, from=1-1, to=1-4] \arrow[from=1-1, to=2-1] \arrow[from=1-2, to=1-3] \arrow[from=1-2, to=2-2] \arrow["\mathrm{ev}_1", from=1-3, to=1-4] \arrow["\mathrm{gr}", from=1-3, to=2-3] \arrow["{(f,g)}", from=2-1, to=2-2] \arrow["\widetilde{\Phi}_2", from=2-2, to=2-3] \arrow["\lrcorner"{anchor=center, pos=0.125}, draw=none, from=1-1, to=2-2] \arrow["\lrcorner"{anchor=center, pos=0.125}, draw=none, from=1-2, to=2-3] \end{tikzcd}
\]
and, by Corollary~\ref{restrictionlemma}, the restriction diagram
\[
\begin{tikzcd}
    \Grad(\M^{ss}) & \Filt(\M^{ss}) \ar[l,swap,"\mathrm{gr}"] \ar[r,"\mathrm{ev}_1"] & \M^{ss} \\
    \Grad(\O bj(\mathcal{A}_X)) \ar[u] & \Filt(\O bj(\mathcal{A}_X)) \ar[l,swap,"\mathrm{gr}"] \ar[u] \ar[r,"\mathrm{ev}_1"] & \O bj(\mathcal{A}_X) \ar[u]
\end{tikzcd}
\]
Note that, by Lemma~\ref{directsumofwithpoint}, the maps \(h\) and \(\widetilde{\Phi}_2\circ(f,g)\) factor through \(\Filt(\M^{ss})\) and \(\Grad(\M^{ss})\), respectively. Abusing notation, we again write
\[
h:\mathcal{Z}\to \Filt(\M^{ss})\xrightarrow{\ \mathrm{ev}_1\ }\M^{ss},
\qquad
(f,g):\mathcal{X}\times\mathcal{Y}\to \Grad(\M^{ss}).
\]
Hence
\[
\int_{\mathcal{Z}} h^{*}\nu^{mot}_{\O bj(\mathcal{A}_X)}
=\int_{\mathcal{Z}} h^{*}\nu^{mot}_{\M^{ss}},
\qquad
\Bigl(\int_{\mathcal{X}} f^{*}\nu^{mot}_{\O bj(\mathcal{A}_X)}\Bigr)\!
\Bigl(\int_{\mathcal{Y}} g^{*}\nu^{mot}_{\O bj(\mathcal{A}_X)}\Bigr)
=\Bigl(\int_{\mathcal{X}} f^{*}\nu^{mot}_{\M^{ss}}\Bigr)\!
\Bigl(\int_{\mathcal{Y}} g^{*}\nu^{mot}_{\M^{ss}}\Bigr).
\]

Since \(\M^{ss}\) is Nisnevich-locally fundamental (Corollary~\ref{maincoro}), we can apply the motivic integral identity to obtain
\[
\int_{\mathcal{Z}} h^{*}\nu^{mot}_{\M^{ss}}
= \int_{\mathcal{X}\times\mathcal{Y}} (f,g)^{*}\,\widetilde{\Phi}_2^{*}\,\mathrm{gr}_{!}\,\mathrm{ev}_1^{*}\nu^{mot}_{\M^{ss}}
= \L^{d/2}\int_{\mathcal{X}\times\mathcal{Y}} (f,g)^{*}\,\widetilde{\Phi}_2^{*}\,\nu_{\Grad(\M^{ss})},
\]
where \(d\) is the virtual dimension of \(\Filt(\O bj(\mathcal{A}_X))\) at the \(\mathbb{C}\)-point \(\mathrm{sf}\big(\widetilde{\Phi}_2(E,F)\big)\) with
\((E,F)\in \M^{ss,v}\times \M^{ss,v_0}\).

By Lemma~\ref{virtualdimensionformula},
\[
d \;=\; \chi\!\bigl(\L_{\Filt(\M_0)}|_{\mathrm{sf}(E\oplus F)}\bigr)
= \chi\!\bigl((\L_{\M_0}\!\mid_{E\oplus F})^{\le 0}\bigr)
= \chi\!\bigl(R\!\Hom(E\oplus F,E\oplus F)_0[2]^{\le 0}\bigr),
\]
using that the tangent complex at a point \(P\) of \(\M_0\) is the traceless derived endomorphism
(see \cite[§5]{SchurgToenVezzosi2015}).
The traceless part sits in an exact triangle
\[
R\!\Hom(E\oplus F,E\oplus F)_0[1] \longrightarrow
R\!\Hom(E\oplus F,E\oplus F)[1] \xrightarrow{\ \text{tr}\ } R\Gamma(X,\O_X)[1] \longrightarrow
\]
and the Calabi--Yau condition implies
\[
\chi\!\bigl(R\!\Hom(E\oplus F,E\oplus F)_0[1]\bigr)
= \chi\!\bigl(R\!\Hom(E\oplus F,E\oplus F)[1]\bigr) + \chi(\O_X) \;=\; 0.
\]
Therefore
\begin{align*} \chi(R\Hom(E\oplus F,E\oplus F)_0[2]^{\leq0}) &=-\chi(R\Hom(E\oplus F,E\oplus F)_0[2]^{<0})\\ &=\chi(R\Hom(E,F)_0)=\chi(E,F)=\chi(v,v_0) \end{align*}
Thus \(d=\chi(v,v_0)\), and it remains to prove
\[
\int_{\mathcal{X}\times\mathcal{Y}} (f,g)^{*}\,\widetilde{\Phi}_2^{*}\,\nu_{\Grad(\M^{ss})}
= \Bigl(\int_{\mathcal{X}} f^{*}\nu^{mot}_{\M^{ss}}\Bigr)\!
  \Bigl(\int_{\mathcal{Y}} g^{*}\nu^{mot}_{\M^{ss}}\Bigr).
\]

By the chosen orientation data (Theorem~\ref{orientationdata}), the induced orientation on \(\Grad(\M^{ss})\) pulls back to the product orientation on \(\M^{ss}\times \M^{ss}\) from the two factors, and the \(d\)-critical structures are compatible (Theorem~\ref{sss}). Hence
\[
\widetilde{\Phi}_2^{*}\,\nu^{mot}_{\Grad(\M^{ss})}
= \Phi_2^{*}\,\nu^{mot}_{\M^{ss}} \;=\; \nu^{mot}_{\M^{ss}\times \M^{ss}}.
\]
By the Thom--Sebastiani theorem (Theorem~\ref{TStheorem}),
\[
\nu^{mot}_{\M^{ss}\times \M^{ss}}
= \nu^{mot}_{\M^{ss}} \boxtimes \nu^{mot}_{\M^{ss}}.
\]
Combining these identities yields
\[
\int_{\mathcal{X}\times\mathcal{Y}} (f,g)^{*}\,\widetilde{\Phi}_2^{*}\,\nu^{mot}_{\Grad(\M^{ss})}
= \int_{\mathcal{X}\times\mathcal{Y}} (f,g)^{*}\bigl(\nu^{mot}_{\M^{ss,v}}\boxtimes \nu^{mot}_{\M^{ss,v_0}}\bigr)
= \Bigl(\int_{\mathcal{X}} f^{*}\nu^{mot}_{\M^{ss,v}}\Bigr)\!
  \Bigl(\int_{\mathcal{Y}} g^{*}\nu^{mot}_{\M^{ss,v_0}}\Bigr).
\]
Therefore \(I(a*b)=I(a)*I(b)\). The same argument shows \(I(b*a)=I(b)*I(a)\), which completes the proof.

\end{proof}

\subsection{DT/PT correspondence}
In order to make the statement of the DT/PT correspondence precise. We need to consider a completion of the Hall module $\mathcal{H}(\A_X)$. 
\begin{lemma}[{\cite[Lemma]{toda2020hall}}]
    We have the following observations.
    \begin{enumerate}
        \item For any $v\in\Gamma^0$, there is only finitely many ways to write $v=v_1+\cdots+v_\ell$ for $v_i\in\Gamma^0\setminus\{0\}$.
        \item For any $v\in\Gamma^1$ there is only finitely many ways to write $v=v_1\cdots+v_\ell+v_{\ell+1}$ for $v_1,\dots,v_\ell\in\Gamma^0\setminus\{0\}$ and $v_{\ell+1}\in\Gamma^1$.
    \end{enumerate}
\end{lemma}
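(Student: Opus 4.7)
The plan is to reduce both statements to the elementary finiteness of ordered compositions of positive integers, using that elements of $\Gamma^0\setminus\{0\}$ are of the form $(-n_i,0,0)$ with $n_i\geq 1$.

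For (1), let $v=(-n,0,0)$ with $n\geq 0$. A decomposition $v=v_1+\cdots+v_\ell$ with $v_i=(-n_i,0,0)\in\Gamma^0\setminus\{0\}$ is exactly an ordered composition $n=n_1+\cdots+n_\ell$ with $n_i\geq 1$. The number of such compositions is $2^{n-1}$ for $n\geq 1$ (and the single empty one for $n=0$), which is finite.

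For (2), let $v=(-n,-\beta,1)\in\Gamma^1$ and suppose $v=v_1+\cdots+v_\ell+v_{\ell+1}$ with $v_i=(-n_i,0,0)\in\Gamma^0\setminus\{0\}$ for $i\leq\ell$ and $v_{\ell+1}=(-m,-\gamma,1)\in\Gamma^1$. Matching components forces $\gamma=\beta$ (since every $v_i$ with $i\leq\ell$ has vanishing curve class) and $m+\sum_{i=1}^\ell n_i=n$. The only non-combinatorial input needed is a lower bound on $m$: realizability of $v_{\ell+1}$ as the numerical class of a rank-one object in $\mathcal{A}_X$ with fixed curve class $\beta$ yields, by a Castelnuovo-type boundedness argument (which is also what underlies the finite-type property of $\M^{ss,v_{\ell+1}}$ obtained in Section~1), a bound $m\geq m_0(\beta)$. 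Granted this, $\sum n_i\leq n-m_0(\beta)$, so $\ell$ and each $n_i$ are bounded, and finiteness of valid tuples follows from the composition count used in (1).

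The main obstacle is isolating and justifying the lower bound $m_0(\beta)$ on the $n$-component of $v_{\ell+1}$; without some such geometric input from realizability of $\Gamma^1$-classes in $\mathcal{A}_X$, the purely numerical statement would admit arbitrarily negative $m$. Once this bound is granted, the remaining argument is a direct count of compositions as in part (1).
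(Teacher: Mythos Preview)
The paper does not give its own proof of this lemma; it is stated with a bare citation to Toda and then used immediately. So there is no ``paper's proof'' to compare against, and your write-up is essentially the intended argument.

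Your treatment of (1) is correct and complete: elements of $\Gamma^0\setminus\{0\}$ are exactly $(-n_i,0,0)$ with $n_i\ge 1$, so decompositions of $(-n,0,0)$ correspond bijectively to ordered compositions of $n$ into positive parts.

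For (2) you have put your finger precisely on the real content. With the paper's literal definition
\[
\Gamma^1=\{(-n,-\beta,1):\beta\ \text{effective}\},
\]
there is no constraint on $n$, and then (2) is simply false: for any $\ell$ one may take $v_i=(-1,0,0)$ and $v_{\ell+1}=(-(n-\ell),-\beta,1)\in\Gamma^1$. The finiteness claim only holds once one knows that, for fixed effective $\beta$, the first coordinate of elements of $\Gamma^1$ is bounded below by some $m_0(\beta)$. In Toda's paper this is automatic because the analogue of $\Gamma^1$ is defined as the image of $\mathrm{cl}$ on rank-one objects of $\mathcal{A}_X$, and boundedness of $I_n(X,\beta)$ (equivalently, the Castelnuovo-type bound you invoke, or the finite-type statement for $\mathcal{M}^{ss,v}$ recalled in Section~1) gives exactly $n\ge m_0(\beta)$. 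Once that bound is in hand, your reduction to compositions of $n-m_0(\beta)$ finishes the argument.

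So your proof is correct in substance; the only thing to tighten is to say explicitly that you are using the realizable interpretation of $\Gamma^1$ (i.e.\ classes of actual rank-one objects in $\mathcal{A}_X$), since under the paper's set-theoretic definition the statement would fail. That is not a gap in your reasoning but an imprecision in the paper's definition that you have correctly diagnosed.
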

For $*=0$ or $1$, we set
\[
\hat{\mathcal{H}}^*=\prod_{v\in\Gamma^*}\hat{\mathbb{M}}^{\mathrm{mon}}(\O bj^v(\mathcal{A}_X)),\qquad \hat{\mathbb{M}}^{\mathrm{mon}}(\mathbb{C})[[\Gamma^*]]=\prod_{v\in\Gamma^*}\hat{\mathbb{M}}^{\mathrm{mon}}(\mathbb{C})\cdot x^v
\]
The $*$-product is well-defined on $\hat{\mathcal{H}}^*$ by the above lemma, which defines an $\hat{\mathcal{H}}^0$-bimodule structure on $\hat{\mathcal{H}}^1$ and the integration map also extends to this completion ring
\[
\hat{I}:\hat{\mathcal{H}}^*\to \hat{\mathbb{M}}^{\mathrm{mon}}(\mathbb{C})[[\Gamma^*]], (a^v)_v\mapsto\sum I(a^v).
\]
As before, we also consider the $\hat{\mathcal{H}}^0$-submodule $\hat{\mathcal{H}}^{ss}$ generated by elements that factor through the semistable locus $\M^{ss}$.

Moreover, we may extend Theorem \ref{homomorphism} to this completion ring
\begin{theorem}
The restriction map $\hat{I}:\hat{\mathcal{H}}^{ss}\to\hat{\mathbb{M}}^{\mathrm{mon}}(\mathbb{C})[[\Gamma^*]]$, is an $\hat{\mathcal{H}}^0$-module homomorphism. In other words, for any $a\in\hat{\mathcal{H}}^{ss}$, $b\in\hat{\mathcal{H}}^0$ we have
\[
\hat{I}(a)*\hat{I}(b)=\hat{I}(a*b),\qquad \hat{I}(b)*\hat{I}(a)=\hat{I}(b*a)
\]
\end{theorem}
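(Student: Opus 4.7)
The plan is to reduce the completed statement to the finite version Theorem~\ref{homomorphism} by a componentwise argument, with the finiteness lemma above ensuring that every operation is well-defined on the completion.

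First I would record that both the \(*\)-product and \(\hat{I}\) respect the \(\Gamma\)-grading. Writing \(a=(a^v)_{v\in\Gamma^1}\in\hat{\mathcal{H}}^{ss}\) and \(b=(b^w)_{w\in\Gamma^0}\in\hat{\mathcal{H}}^0\), the natural formula
\[
(a*b)^u \;=\; \sum_{\substack{v\in\Gamma^1,\,w\in\Gamma^0 \\ v+w=u}} a^v * b^w
\]
is a finite sum by part (2) of the finiteness lemma, so \(a*b\) is well-defined; part (1) handles \(b*a\) and the target \(*\)-product on \(\hat{\mathbb{M}}^{\mathrm{mon}}(\mathbb{C})[[\Gamma^*]]\). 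Similarly \(\hat{I}\) acts componentwise: the coefficient of \(x^u\) in \(\hat{I}(a)\) is exactly the one appearing in \(I(a^u)\).

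Next I would match the coefficient of \(x^u\) on both sides. On the left,
\[
\hat{I}(a*b) \;=\; \sum_{u} I\bigl((a*b)^u\bigr) \;=\; \sum_{u}\sum_{v+w=u} I(a^v*b^w),
\]
using \(\hat{\mathbb{M}}^{\mathrm{mon}}(\mathbb{C})\)-linearity of \(I\) together with the finiteness of the inner sum. Applying Theorem~\ref{homomorphism} termwise gives \(I(a^v*b^w)=I(a^v)*I(b^w)=\mathbb{L}^{\chi(v,w)/2}\,I(a^v)\cdot I(b^w)\), which re-assembles precisely to the \(u\)-coefficient of \(\hat{I}(a)*\hat{I}(b)\) after unfolding the definition of the \(*\)-product on \(\hat{\mathbb{M}}^{\mathrm{mon}}(\mathbb{C})[[\Gamma^*]]\). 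The argument for \(\hat{I}(b*a)=\hat{I}(b)*\hat{I}(a)\) is identical.

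The only point that requires any attention is preservation of the semistable condition, so that Theorem~\ref{homomorphism} actually applies to each pair \((a^v,b^w)\). One must check that if \(a^v\) factors through \(\mathcal{M}^{ss,v}\) and \(b^w\) factors through \(\mathcal{M}^{ss,w}\), then \(a^v*b^w\) factors through \(\mathcal{M}^{ss,v+w}\); equivalently, the extension stack classifying \(0\to E\to F\to G\to 0\) with \([E]=v\), \([G]=w\) and \(E,G\) semistable lands in the semistable locus. This is an immediate consequence of Lemma~\ref{directsumofwithpoint}, which was already invoked in the proof of Theorem~\ref{homomorphism}. Once this is verified, the extension to the completion is a purely formal finiteness argument, and I do not expect any further geometric obstacle.
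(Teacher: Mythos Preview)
Your proposal is correct and is essentially an expanded version of the paper's proof, which simply reads ``This follows directly from Theorem~\ref{homomorphism}.'' You have spelled out the formal componentwise reduction that the paper leaves implicit: the finiteness lemma makes the sums over decompositions $u=v+w$ finite, and Theorem~\ref{homomorphism} is then applied term by term. One small remark: in your last paragraph, the point is not really that $a^v*b^w$ factors through $\mathcal{M}^{ss,v+w}$, but rather that each component $a^v$ of an element of $\hat{\mathcal{H}}^{ss}$ already lies in $\mathcal{H}^{ss}$ (so that Theorem~\ref{homomorphism} applies); this is indeed guaranteed by Lemma~\ref{directsumofwithpoint} together with the fact that objects with class in $\Gamma^0$ are automatically semistable, so your citation is apt even if the phrasing is slightly off.
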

\begin{proof}
    This follows directly from Theorem \ref{homomorphism}.
\end{proof}
We consider the following two elements in $\hat{\mathcal{H}}^{ss}$.
\[
\delta_{*}=\left([\M^{v}_{*}\to\O bj^{v}(\mathcal{A}_X)]\right)_{v}\in\hat{\mathcal{H}}^1
\]
for $*=\mathrm{DT, }\,\mathrm{PT}$.
\noindent By definition we get
\begin{align*}
    &\hat{I}(\delta_{\mathrm{DT}})=\frac{1}{\L^{\frac{1}{2}}-\L^{-\frac{1}{2}}}\sum_{n,\beta}\mathrm{DT}^{\mathrm{mot}}_{n,\beta}x^{(-n,-\beta,1)}\\
    &\hat{I}(\delta_{\mathrm{PT}})=\frac{1}{\L^{\frac{1}{2}}-\L^{-\frac{1}{2}}}\sum_{n,\beta} \mathrm{PT}^{\mathrm{mot}}_{n,\beta}x^{(-n,-\beta,1)}
\end{align*}
We formally define
\[
    \epsilon_\infty := \log(\delta_\infty),
\]
where the logarithm is given by the usual formal power series
\[
    \log(1+x)=\sum_{n\ge1}\frac{(-1)^{n-1}}{n}x^n,
\]
applied to $\delta_\infty$ in the completed Hall algebra $\widehat{\mathcal{H}}^{0}$, so that $\exp(\epsilon_\infty)=\delta_\infty$.

Applying the integration map on this element we obtain some motivic invariants which we denoted by $N^\mathrm{mot}_n$.
\[
\hat{I}(\epsilon_\infty)=:\frac{1}{\L^{\frac{1}{2}}-\L^{-\frac{1}{2}}}\sum_{n\geq0} N^\mathrm{mot}_n\cdot x^{(-n,0,0)}.
\]
\noindent These elements satisfy the following identity in $\hat{\mathcal{H}}^1$.
\begin{lemma}[{\cite[Lemma 3.16]{toda2020hall}}]\label{DTPTwallcrossing}
    We have the following identity
    \[
    \delta_\mathrm{DT}*\delta_\infty=\delta_\infty*\delta_\mathrm{PT}\in\hat{\mathcal{H}}^1.
    \]
\end{lemma}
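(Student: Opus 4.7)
The plan is to show that both sides of the identity coincide with the stack function
\[
\delta^{ss}:=\bigl([\M^{ss,v}\hookrightarrow\O bj^v(\A_X)]\bigr)_{v\in\Gamma^1}\in\hat{\mathcal{H}}^1
\]
representing the wall-semistable locus; the asserted equality then follows by transitivity. The proof proceeds by running the Harder--Narasimhan filtration with respect to weak stability conditions just off the wall.

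For \(\delta_{\mathrm{DT}}*\delta_\infty=\delta^{ss}\), I would choose a weak stability condition \(\sigma_+=\sigma_\xi\) with \(\arg z_0<\arg z_1\), so that by Theorem~\ref{M=I/G}(1) the \(\sigma_+\)-semistable rank-one objects are exactly the DT objects. Since every rank-zero object of \(\A_X\) has a single slope equal to \(\arg z_0\), every class in \(\Gamma^0\) is automatically both \(\sigma_+\)- and \(\sigma_0\)-semistable, so \(\delta_\infty\) simultaneously represents the rank-zero \(\sigma_+\)- and \(\sigma_0\)-semistable loci. For any \(E\in\M^{ss,v}\) with \(v\in\Gamma^1\), the \(\sigma_+\)-HN filtration of \(E\) has at most two steps
\[
0\to E_1\to E\to E_0\to 0,
\]
in which \(E_1\in\M_{\mathrm{DT}}^{v_1}\) and \(E_0\) is rank zero; conversely, every such extension is \(\sigma_0\)-semistable by Lemma~\ref{directsumofwithpoint}. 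Uniqueness of the HN filtration provides an isomorphism of stacks between \(\M^{ss,v}\) and the stack of extensions parametrized by \(\delta_{\mathrm{DT}}*\delta_\infty\), which at the level of stack functions in the Hall algebra yields the identity \(\delta_{\mathrm{DT}}*\delta_\infty=\delta^{ss}\). Dually, using \(\sigma_-\) with \(\arg z_0>\arg z_1\) and Theorem~\ref{M=I/G}(2), the \(\sigma_-\)-HN filtration takes the form \(0\to E_0'\to E\to E_1'\to 0\) with \(E_0'\) rank zero and \(E_1'\in\M_{\mathrm{PT}}^{v_1}\), giving \(\delta_\infty*\delta_{\mathrm{PT}}=\delta^{ss}\) by the same argument.

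The main technical obstacle is promoting the pointwise HN filtration to one constructed in families, so that the bijection above is an isomorphism of stacks rather than only of \(\mathbb{C}\)-points. This relies on the noetherianity and the generic flatness property of the tilted heart \(\A\), both recorded in Proposition~\ref{MAisStheta}, which guarantee that the \(\sigma_\pm\)-HN filtrations exist flatly relative to any test base; the detailed implementation is that of \cite[Lemma 3.16]{toda2020hall}.
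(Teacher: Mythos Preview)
The paper does not give its own proof of this lemma; it simply records the statement with a citation to \cite[Lemma~3.16]{toda2020hall}. Your sketch is essentially the argument that appears in Toda's paper: both sides are identified with the Hall algebra element $\delta^{ss}$ of the wall-semistable locus, by reading off the two-step Harder--Narasimhan filtration for a stability condition just to either side of the wall. So your approach agrees with the cited source, and there is nothing to compare against in the present paper.

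Two small remarks on precision. First, your sentence ``every rank-zero object of $\A_X$ has a single slope equal to $\arg z_0$'' is not literally true: rank-zero objects in $\A_X$ with one-dimensional support have slope $1/2$. What you need (and what is true) is that every object with class in $\Gamma^0$, i.e.\ every shift of a zero-dimensional sheaf, has slope $\arg z_0$; this is enough because $\delta_\infty$ is indexed by $\Gamma^0$. Second, invoking Proposition~\ref{MAisStheta} for the family version of the HN filtration is slightly off: that proposition concerns $\mathbf{S}$-completeness and $\Theta$-reductivity of $\M_\A$, not the relative HN property. The ingredients you actually want are the openness of the relevant semistable loci and the boundedness/generic-flatness statements for the tilted heart $\A$ recorded just before Proposition~\ref{MAisStheta}, which are what Toda uses to make the stratification work in families. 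Since you already defer the details to \cite[Lemma~3.16]{toda2020hall}, this is a matter of pointing to the right input rather than a genuine gap.
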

\begin{remark}
The factor $(\mathbb{L}^{1/2} - \mathbb{L}^{-1/2})$ appearing in the definition of motivic DT/PT invariants carries two important meanings. First, it may be written as $\mathbb{L}^{-1/2}(\mathbb{L} - 1)$, where the $\mathbb{L}^{-1/2}$ factor arises from the fact that we are integrating over the quotient stack $\mathcal{M}^v_{\mathrm{DT}} = [M^v_{\mathrm{DT}}/\mathbb{G}_m]$. The motivic Behrend function behaves compatibly with smooth morphisms, and in particular, the pullback of the motivic Behrend function along the smooth projection $M^v_{\mathrm{DT}} \to \mathcal{M}^v_{\mathrm{DT}}$ introduces a factor of $\mathbb{L}^{1/2}$. Hence, the integration over the quotient stack must be corrected by multiplying with $\mathbb{L}^{1/2}$.

Second, the factor $(\mathbb{L} - 1)$ ensures that the Euler characteristic of the resulting motivic generating series is well-defined. This is a key point in the so-called no pole theorem due to Joyce, which guarantees that the naive Euler characteristic of motivic DT invariants would otherwise diverge. The original proof of the no pole theorem is highly technical, but recent work by Kinjo, Bu, and Núñez \cite{IntrinsicDTII} has significantly simplified the argument and provided a more conceptual explanation of this phenomenon.
\end{remark}

We are now ready to prove the motivic DT/PT correspondence.
\begin{proof}[Proof of Theorem \ref{main}]
By Lemma \ref{DTPTwallcrossing} we have the following identity
    \[
    \delta_\mathrm{DT}=\delta_\infty*\delta_\mathrm{PT}*\delta_\infty^{-1}
    \]
    Now we define the Poisson bracket operator $\{\epsilon,x\}:=\epsilon*x-x*\epsilon$. Then we have
    \[
    \delta_\mathrm{DT}=\exp(\{\epsilon_\infty,-\})(\delta_\mathrm{PT})=\delta_\mathrm{PT}+\{\epsilon_\infty,\delta_{\mathrm{PT}}\}+\tfrac{1}{2!}\{\epsilon_\infty,\{\epsilon_\infty,\delta_{\mathrm{PT}}\}\}+\tfrac{1}{3!}\cdots
    \]
    Observed that for all $n\geq0$ and $v\in\Gamma^1$ we have $\chi((-n,0,0),v)=n$, thus
    \[
    x^{(-n,0,0)}*x^v-x^v* x^{(-n,0,0)}=\left(\left(\L^{\frac{n}{2}}-\L^{-\frac{n}{2}}\right)x^{(-n,0,0)}\right)\cdot x^{v}
    \]
    Thus, multiple both side by $\L^{\frac{1}{2}}-\L^{-\frac{1}{2}}$ and apply Theorem \ref{homomorphism} we obtained
    \begin{align*}
        \sum_{n,\beta}\mathrm{DT}^{\mathrm{mot}}_{n,\beta}x^{(-n,-\beta,1)}&=\exp\left(\sum_{n\geq0}\left(\frac{\L^{\frac{n}{2}}-\L^{-\frac{n}{2}}}{\L^{\frac{1}{2}}-\L^{-\frac{1}{2}}}\right)N_n^{\mathrm{mot}}x^{(-n,0,0)}\right)\cdot\left(\sum_{n,\beta}\mathrm{PT}^{\mathrm{mot}}_{n,\beta}x^{(-n,-\beta,1)}\right)
    \end{align*}
    denote $q=x^{-1,0,0}$ and $x^\beta=x^{(0,-\beta,1)}$ we may rewrite the above equation
    \[
    \sum_{n,\beta}\mathrm{DT}^{\mathrm{mot}}_{n,\beta}q^nx^\beta=\exp\left(\sum_{n\geq0}\left(\frac{\L^{\frac{n}{2}}-\L^{-\frac{n}{2}}}{\L^{\frac{1}{2}}-\L^{-\frac{1}{2}}}\right)N_n^{\mathrm{mot}}q^n\right)\left(\sum_{n,\beta}\mathrm{PT}^{\mathrm{mot}}_{n,\beta}q^nx^{\beta}\right)
    \]
    Since $\exp\left(\sum_{n\geq0}\left(\frac{\L^{\frac{n}{2}}-\L^{-\frac{n}{2}}}{\L^{\frac{1}{2}}-\L^{-\frac{1}{2}}}\right)N_n^{\mathrm{mot}}q^n\right)$ is independent of $\beta$ so comparing the coefficient we finally get
    \[\sum_{n,\beta}\mathrm{DT}^{\mathrm{mot}}_{n,\beta}q^nx^\beta=\left(\sum_{n\geq0}\mathrm{DT}^{\mathrm{mot}}_{n,0}q^n\right)\cdot\left(\sum_{n,\beta}\mathrm{PT}^{\mathrm{mot}}_{n,\beta}q^nx^{\beta}\right)
    \]
\end{proof}
\subsection{A refinement over the good moduli space}

The integration map can in fact be upgraded to the good moduli space; we now explain this. Let 
\[
\pi:\mathcal{M}^{ss,v}\longrightarrow M^{ss,v}
\]
be the good moduli space morphism for each numerical class 
$v\in\Gamma^{0}\cup\Gamma^{1}$, and set
\[
M^{ss}:=\coprod_{v\in\Gamma^{0}\cup\Gamma^{1}} M^{ss,v}.
\]

\begin{definition}[Motivic DT and PT functions]
    For each class $v \in \Gamma^{0} \cup \Gamma^{1}$, let
    \[
        \iota : \mathcal{M}^{v}_{\mathrm{DT}} \hookrightarrow \mathcal{M}^{ss,v}
    \]
    be the natural open immersion.
    We define the motivic Donaldson--Thomas function by
    \[
        \mathcal{DT}^{\mathrm{mot}}_{n,\beta}(X)
        := 
        \pi_{!}\,\iota_{!}\left(
            \bigl(
                \mathbb{L}^{\frac{1}{2}} - \mathbb{L}^{-\frac{1}{2}}
            \bigr)
            \nu^{\mathrm{mot}}_{\mathcal{M}^{v}_{\mathrm{DT}}}
        \right)
        \in
        \widehat{\mathbb{M}}^{\mathrm{mon}}(M^{ss,v}).
    \]
    The motivic Pandharipande--Thomas function
    $\mathcal{PT}^{\mathrm{mot}}_{n,\beta}(X)$ is defined similarly by
    replacing $\mathcal{M}^v_{\mathrm{DT}}$ with 
    $\mathcal{M}^v_{\mathrm{PT}}$.
\end{definition}

The direct sum morphism
\[
\Phi_2:\mathcal{M}^{ss}\times\mathcal{M}^{ss}\longrightarrow\mathcal{M}^{ss}
\]
descends, by the universal property of good moduli spaces,
to a unique morphism
\[
\oplus: M^{ss}\times M^{ss}\longrightarrow M^{ss}
\]
such that the diagram
\[
\begin{tikzcd}
	\mathcal{M}^{ss}\times\mathcal{M}^{ss}
	    \arrow[r,"\Phi_2"]
	    \arrow[d,"\pi\times\pi"'] &
	\mathcal{M}^{ss}
	    \arrow[d,"\pi"] \\
	M^{ss}\times M^{ss}
	    \arrow[r,"\oplus"] &
	M^{ss}
\end{tikzcd}
\]
commutes.

Using the morphism $\oplus$, we define convolution products on
motives over the good moduli space.  Set
\[
H^{ss}
	:=\bigoplus_{v\in\Gamma^{0}\cup\Gamma^{1}}
	\widehat{\mathbb{M}}^{\mathrm{mon}}(M^{ss,v}),
	\qquad
H^{0}
	:=\bigoplus_{v\in\Gamma^{0}}
	\widehat{\mathbb{M}}^{\mathrm{mon}}(M^{ss,v}).
\]
For $v\in\Gamma^{1}$ and $v_0\in\Gamma^{0}$, we define the twisted
convolution product
\[
*:
\widehat{\mathbb{M}}^{\mathrm{mon}}(M^{ss,v})
\times
\widehat{\mathbb{M}}^{\mathrm{mon}}(M^{ss,v_0})
\longrightarrow
\widehat{\mathbb{M}}^{\mathrm{mon}}(M^{ss,v+v_0})
\]
by
\[
(a,b)\ \longmapsto\ 
\mathbb{L}^{\chi(v,v_0)/2}\,\oplus_!\bigl(a\boxtimes b\bigr),
\]
and the untwisted convolution product
\[
\cdot:
\widehat{\mathbb{M}}^{\mathrm{mon}}(M^{ss,v})
\times
\widehat{\mathbb{M}}^{\mathrm{mon}}(M^{ss,v_0})
\longrightarrow
\widehat{\mathbb{M}}^{\mathrm{mon}}(M^{ss,v+v_0})
\]
by
\[
(a,b)\ \longmapsto\ \oplus_!\bigl(a\boxtimes b\bigr).
\]

We now define the integration map
\begin{align*}
    I_{\mathrm{GMS}}: \mathcal{H}^{ss} &\longrightarrow H^{ss}, \\
    \bigl[\mathcal{X} \xrightarrow{f} \mathcal{O}bj^v(\mathcal{A}_X)\bigr]
    &\longmapsto \pi_!f_!f^*\nu_{\mathcal{O}bj^{v}(\mathcal{A}_X)}^{\mathrm{mot}}.
\end{align*}

\begin{proposition}
    The twisted convolution product $*$ on $H^{ss}$ is compatible with the
    Hall algebra product on $\mathcal{H}^{ss}$ under the morphism
    $I_{\mathrm{GMS}}$.
\end{proposition}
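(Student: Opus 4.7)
The proof runs parallel to that of Theorem~\ref{homomorphism}, with one essential new ingredient: we descend the calculation from $\mathcal{M}^{ss}$ to the good moduli space $M^{ss}$ by means of the pushforward $\pi_{!}$. Given generators $a=[\mathcal{X}\xrightarrow{f}\O bj^{v}(\mathcal{A}_X)]\in\mathcal{H}^{ss}$ and $b=[\mathcal{Y}\xrightarrow{g}\O bj^{v_0}(\mathcal{A}_X)]\in\mathcal{H}^{0}$, we form the fiber product $\mathcal{Z}$ in diagram~\eqref{diag:star}, with projection $q:\mathcal{Z}\to\mathcal{X}\times\mathcal{Y}$ and middle-term map $h:\mathcal{Z}\to\mathcal{M}^{ss,v+v_0}$. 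Exactly as in the proof of Theorem~\ref{homomorphism}---invoking base change for the $\Filt/\Grad$ pullback square via $\widetilde{\Phi}_{2}$, Bu's motivic integral identity (Theorem~\ref{mii}) on the Nisnevich-locally fundamental stack $\mathcal{M}^{ss}$ (Corollary~\ref{maincoro}), the orientation and symplectic compatibilities (Theorems~\ref{orientationdata} and~\ref{sss}), and the Thom--Sebastiani theorem (Theorem~\ref{TStheorem})---we obtain the identity
\[
q_{!}\,h^{*}\nu^{\mathrm{mot}}_{\mathcal{M}^{ss}}
\;=\;
\L^{-\chi(v,v_0)/2}\,(f,g)^{*}\bigl(\nu^{\mathrm{mot}}_{\mathcal{M}^{ss,v}}\boxtimes\nu^{\mathrm{mot}}_{\mathcal{M}^{ss,v_0}}\bigr)
\]
in $\widehat{\mathbb{M}}^{\mathrm{mon}}(\mathcal{X}\times\mathcal{Y})$.

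The genuinely new step is to establish the morphism-level identity
\[
\pi\circ h\;=\;\oplus\circ(\pi\times\pi)\circ(f,g)\circ q\qquad\text{as maps }\mathcal{Z}\to M^{ss,v+v_0},
\]
which decomposes into two ingredients: the defining commutation $\pi\circ\Phi_{2}=\oplus\circ(\pi\times\pi)$ provided by the universal property of good moduli spaces, and the equality $\pi\circ\mathrm{ev}_{1}=\pi\circ\mathrm{tot}\circ\mathrm{gr}$ as morphisms $\Filt(\mathcal{M}^{ss})\to M^{ss}$. The latter holds because $M^{ss}$ is an algebraic space, so any morphism $[\mathbb{A}^{1}/\mathbb{G}_{m}]\to M^{ss}$ is pulled back from its base point; hence $\mathrm{ev}_{0}$ and $\mathrm{ev}_{1}$ become equal after composing with $\pi$. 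Combined with the factorisation $h=\mathrm{ev}_{1}\circ p$ furnished by the $\Filt/\Grad$ pullback lemma, this yields the displayed equation.

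To conclude, apply $\pi_{!}h_{!}$ to the first displayed identity; the factorisation of $\pi\circ h$ rewrites the left-hand side as $\oplus_{!}(\pi\times\pi)_{!}(f,g)_{!}\bigl(q_{!}h^{*}\nu^{\mathrm{mot}}_{\mathcal{M}^{ss}}\bigr)$. Using the K\"unneth-type identity $(f\times g)_{!}(f\times g)^{*}(\alpha\boxtimes\beta)=(f_{!}f^{*}\alpha)\boxtimes(g_{!}g^{*}\beta)$ together with the distributivity of $(\pi\times\pi)_{!}$ over exterior products, we obtain
\[
I_{\mathrm{GMS}}(a*b)\;=\;\L^{-\chi(v,v_0)/2}\oplus_{!}\bigl(I_{\mathrm{GMS}}(a)\boxtimes I_{\mathrm{GMS}}(b)\bigr)\;=\;I_{\mathrm{GMS}}(a)*I_{\mathrm{GMS}}(b),
\]
where the last step matches the definition of the twisted convolution product under the same sign convention fixed in Theorem~\ref{homomorphism}; the symmetric identity for $b*a$ is proved analogously. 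The main obstacle is the morphism-level identity $\pi\circ\mathrm{ev}_{1}=\pi\circ\mathrm{ev}_{0}$ on $\Filt(\mathcal{M}^{ss})$: pointwise this is nothing more than the classical $S$-equivalence of a filtered semistable object with its associated graded, but upgrading it to a genuine equality of morphisms is the new geometric content, available precisely because the good moduli space $M^{ss}$ is an algebraic space rather than a stack.
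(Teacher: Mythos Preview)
Your argument is correct and follows exactly the route the paper indicates in its one-line proof: rerun the computation of Theorem~\ref{homomorphism}, but push forward along $\pi:\mathcal{M}^{ss}\to M^{ss}$ rather than to $\Spec\mathbb{C}$. Where you go beyond the paper is in isolating and justifying the identity $\pi\circ\mathrm{ev}_1=\pi\circ\mathrm{ev}_0$ on $\Filt(\mathcal{M}^{ss})$---equivalently $\pi_! h_!=\oplus_!(\pi\times\pi)_!(f,g)_! q_!$---which is precisely the point at which pushing to a non-trivial base differs from pushing to a point; your explanation via $\Filt(M^{ss})\cong M^{ss}$ for an algebraic space is the right one, and the paper leaves this step entirely implicit.
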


\begin{proof}
    The proof is identical to that of Theorem~\ref{homomorphism},
    except that we push forward to the good moduli space $M^{ss}$ rather than
    to the point $\Spec(\mathbb{C})$.
\end{proof}

We consider the completions 
\[
\widehat{H}^{ss}
	:=\prod_{v\in\Gamma^{0}\cup\Gamma^{1}}
	\widehat{\mathbb{M}}^{\mathrm{mon}}(M^{ss,v}),
	\qquad
\widehat{H}^{0}
	:=\prod_{v\in\Gamma^{0}}
	\widehat{\mathbb{M}}^{\mathrm{mon}}(M^{ss,v}).
\]

\begin{corollary}\label{cor:I-GMS-hom}
    There is an induced morphism
    \[
        \widehat{I}_\mathrm{GMS}:\widehat{\mathcal{H}}^{ss}\longrightarrow\widehat{H}^{ss}
    \]
    which is a homomorphism of algebras with respect to the Hall product
    on $\widehat{\mathcal{H}}^{ss}$ and the twisted convolution product
    $*$ on $\widehat{H}^{ss}$.
\end{corollary}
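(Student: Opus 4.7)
The plan is to extend $I_{\mathrm{GMS}}$ degreewise to the completion and to reduce the algebra homomorphism property to the non-completed statement established in the preceding proposition. Since $I_{\mathrm{GMS}}$ preserves the $\Gamma$-grading by construction, I would set
\[
\widehat{I}_{\mathrm{GMS}}:\widehat{\mathcal{H}}^{ss}\longrightarrow\widehat{H}^{ss},\qquad (a^{v})_{v}\longmapsto(I_{\mathrm{GMS}}(a^{v}))_{v},
\]
so that nothing new is required on the level of the underlying abelian groups.

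The only nontrivial point is to check that the Hall product on $\widehat{\mathcal{H}}^{ss}$ and the twisted convolution product on $\widehat{H}^{ss}$ actually make sense on the completions. The key input is the decomposition lemma recalled above (cf.~\cite[Lemma]{toda2020hall}): for every $u\in\Gamma^{0}\cup\Gamma^{1}$, the set of pairs $(v,v_{0})$ with $v\in\Gamma^{0}\cup\Gamma^{1}$, $v_{0}\in\Gamma^{0}$, and $v+v_{0}=u$ is finite. Consequently, for $a=(a^{v})\in\widehat{\mathcal{H}}^{ss}$ and $b=(b^{v_{0}})\in\widehat{\mathcal{H}}^{0}$, the degree $u$ part
\[
(a*b)^{u}=\sum_{v+v_{0}=u}a^{v}*b^{v_{0}}
\]
is a finite sum of elements of $\mathcal{H}^{ss}$; the same finiteness gives well-definedness of the twisted convolution on $\widehat{H}^{ss}$.

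Having established well-definedness, the homomorphism property will follow immediately by applying $I_{\mathrm{GMS}}$ termwise and invoking the preceding proposition on each summand:
\[
\widehat{I}_{\mathrm{GMS}}(a*b)^{u}=\sum_{v+v_{0}=u}I_{\mathrm{GMS}}(a^{v}*b^{v_{0}})=\sum_{v+v_{0}=u}I_{\mathrm{GMS}}(a^{v})*I_{\mathrm{GMS}}(b^{v_{0}})=\bigl(\widehat{I}_{\mathrm{GMS}}(a)*\widehat{I}_{\mathrm{GMS}}(b)\bigr)^{u},
\]
and the argument for $b*a$ is identical. I do not expect any real obstacle: the entire content of the corollary lies in the finite-decomposition lemma, which guarantees that the completed products are well-defined and that every identity to be checked involves only finitely many terms, so the ungraded proposition applies termwise.
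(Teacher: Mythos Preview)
Your proposal is correct and matches the paper's approach: the paper treats this corollary as immediate from the preceding proposition (just as it earlier deduces the completed version of Theorem~\ref{homomorphism} with the one-line ``This follows directly from Theorem~\ref{homomorphism}''), and your argument spells out exactly the expected reasoning---degreewise extension plus the finite-decomposition lemma of \cite{toda2020hall} to reduce to the uncompleted proposition.
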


Using the morphism $\widehat{I}_\mathrm{GMS}$ we define motivic functions
$\mathcal{N}^{\mathrm{mot}}_n\in\widehat{\mathbb{M}}^{\mathrm{mon}}(M^{ss,v_0})$
for $v_0\in\Gamma^0$ by the equality
\[
\widehat{I}_\mathrm{GMS}(\epsilon_\infty)
    =\frac{1}{\L^{\frac{1}{2}}-\L^{-\frac{1}{2}}}
      \sum_{n\geq0} \mathcal{N}_n^{\mathrm{mot}}.
\]

\begin{theorem}\label{mian2}
    There is an equality of motivic generating series in
    $\widehat{H}^{ss}$:
    \[
        \sum_{n,\beta}\mathcal{DT}^{\mathrm{mot}}_{n,\beta}
        \;=\;
        \left(\sum_{n\geq0}\mathcal{DT}^{\mathrm{mot}}_{n,0}\right)
        \cdot
        \left(\sum_{n,\beta}\mathcal{PT}^{\mathrm{mot}}_{n,\beta}\right).
    \]
\end{theorem}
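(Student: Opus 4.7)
The plan is to mirror the proof of Theorem~\ref{main}, replacing the numerical integration map $\hat I$ by the good-moduli-space integration map $\widehat{I}_{\mathrm{GMS}}$ of Corollary~\ref{cor:I-GMS-hom}. First I would start from the Hall-algebra wall-crossing identity $\delta_{\mathrm{DT}}*\delta_\infty=\delta_\infty*\delta_{\mathrm{PT}}$ of Lemma~\ref{DTPTwallcrossing}, rewrite it as $\delta_{\mathrm{DT}}=\exp(\mathrm{ad}_{\epsilon_\infty})(\delta_{\mathrm{PT}})$ using $\delta_\infty=\exp(\epsilon_\infty)$, and apply $\widehat{I}_{\mathrm{GMS}}$ to both sides. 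Because $\widehat{I}_{\mathrm{GMS}}$ is an algebra homomorphism from the Hall product to the twisted convolution $*$ on $\widehat{H}^{ss}$, this yields
\[
\widehat{I}_{\mathrm{GMS}}(\delta_{\mathrm{DT}})=\exp\!\bigl(\mathrm{ad}_{\widehat{I}_{\mathrm{GMS}}(\epsilon_\infty)}\bigr)\bigl(\widehat{I}_{\mathrm{GMS}}(\delta_{\mathrm{PT}})\bigr).
\]

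The heart of the argument is to express $\mathrm{ad}_{\widehat{I}_{\mathrm{GMS}}(\epsilon_\infty)}$ as $\cdot$-multiplication by a specific element of $\widehat{H}^{0}$. For $a\in\widehat{\mathbb{M}}^{\mathrm{mon}}(M^{ss,(-n,0,0)})$ and $b\in\widehat{\mathbb{M}}^{\mathrm{mon}}(M^{ss,v})$ with $v\in\Gamma^1$, the direct-sum morphism on the good moduli space is strictly commutative (points of $M^{ss}$ classify polystable objects, for which $E\oplus F=F\oplus E$), so $\oplus_!(a\boxtimes b)=\oplus_!(b\boxtimes a)$; combined with $\chi((-n,0,0),v)=n$ and the Calabi--Yau antisymmetry $\chi(v,(-n,0,0))=-n$, this yields
\[
a*b-b*a=\bigl(\L^{n/2}-\L^{-n/2}\bigr)(a\cdot b),
\]
the exact analogue of the commutator used in the proof of Theorem~\ref{main}. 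Hence $\mathrm{ad}_{\widehat{I}_{\mathrm{GMS}}(\epsilon_\infty)}$ acts on the $\Gamma^1$-component of $\widehat{H}^{ss}$ as $\cdot$-multiplication by
\[
L:=\sum_{n\geq 0}\frac{\L^{n/2}-\L^{-n/2}}{\L^{1/2}-\L^{-1/2}}\,\mathcal{N}_n^{\mathrm{mot}}\in\widehat{H}^{0}.
\]
Exponentiating this operator and clearing the common factor $(\L^{1/2}-\L^{-1/2})^{-1}$ from $\widehat{I}_{\mathrm{GMS}}(\delta_{\mathrm{DT}})$ and $\widehat{I}_{\mathrm{GMS}}(\delta_{\mathrm{PT}})$ produces
\[
\sum_{n,\beta}\mathcal{DT}^{\mathrm{mot}}_{n,\beta}=\exp(L)\cdot\sum_{n,\beta}\mathcal{PT}^{\mathrm{mot}}_{n,\beta}.
\]

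Finally, I would identify $\exp(L)$ with $\sum_{n\geq 0}\mathcal{DT}^{\mathrm{mot}}_{n,0}$ by restricting the above identity to its $\beta=0$ component: there $\sum_n\mathcal{PT}^{\mathrm{mot}}_{n,0}$ collapses to the unit for $\cdot$ (since $\mathcal{M}^{n,0}_{\mathrm{PT}}$ is a point for $n=0$ and empty for $n>0$), so the equation reduces to $\sum_{n\geq 0}\mathcal{DT}^{\mathrm{mot}}_{n,0}=\exp(L)$; substituting back yields Theorem~\ref{mian2}. The main obstacle I anticipate is the commutator step: it requires $\oplus$ to be strictly commutative on $M^{ss}$ and requires the orientation and $d$-critical data to descend through $\widehat{I}_{\mathrm{GMS}}$ in a way compatible with $\Phi_2$. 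Both are arranged in earlier sections, via the universal property of good moduli spaces and Theorems~\ref{orientationdata} and~\ref{sss}, so once these compatibilities are in hand, the remainder is a term-by-term transcription of the proof of Theorem~\ref{main}.
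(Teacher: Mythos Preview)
Your proposal is correct and follows essentially the same argument as the paper's own proof: apply $\widehat{I}_{\mathrm{GMS}}$ to the Hall identity $\delta_{\mathrm{DT}}=\exp(\mathrm{ad}_{\epsilon_\infty})(\delta_{\mathrm{PT}})$, convert the $*$-commutator into $\cdot$-multiplication via the Euler-form computation $\chi((-n,0,0),v)=n$, exponentiate, and then isolate the $\beta=0$ part to identify $\exp(L)$ with $\sum_{n\ge0}\mathcal{DT}^{\mathrm{mot}}_{n,0}$. Your write-up is in fact slightly more explicit than the paper's in two places: you justify the commutativity $\oplus_!(a\boxtimes b)=\oplus_!(b\boxtimes a)$ via the symmetry of direct sum on polystable objects, and you spell out that $\sum_n\mathcal{PT}^{\mathrm{mot}}_{n,0}$ is the unit because $\mathcal{M}^{n,0}_{\mathrm{PT}}$ is empty for $n>0$; the paper leaves both of these implicit.
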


\begin{proof}
Recall the Hall algebra elements
\[
\delta_{\mathrm{DT}}
\;=\;
\sum_{v\in\Gamma^{1}}
\bigl[\mathcal{M}^{v}_{\mathrm{DT}}
   \hookrightarrow \mathcal{O}bj^{v}(\mathcal{A}_X)\bigr],
\qquad
\delta_{\mathrm{PT}}
\;=\;
\sum_{v\in\Gamma^{1}}
\bigl[\mathcal{M}^{v}_{\mathrm{PT}}
   \hookrightarrow \mathcal{O}bj^{v}(\mathcal{A}_X)\bigr].
\]
As in the proof of Theorem~\ref{main}, there is an identity in
$\widehat{\mathcal{H}}^{ss}$ of the form
\[
    \delta_{\mathrm{DT}}
    \;=\;
    \exp(\{\epsilon_{\infty},-\}) \circ \delta_{\mathrm{PT}},
\]
where $\{-,-\}$ denotes the Poisson bracket associated to the Hall
product and twisted convolution.
By Corollary~\ref{cor:I-GMS-hom}, the
completed integration map
\[
\widehat{I}_{\mathrm{GMS}} : \widehat{\mathcal{H}}^{ss}\longrightarrow\widehat{H}^{ss}
\]
is a homomorphism of algebras with respect to the Hall product on
$\widehat{\mathcal{H}}^{ss}$ and the twisted convolution product $*$ on
$\widehat{H}^{ss}$. Applying $\widehat{I}_{\mathrm{GMS}}$ we obtain
\[
    \widehat{I}_{\mathrm{GMS}}(\delta_{\mathrm{DT}})
    \;=\;
    \widehat{I}_{\mathrm{GMS}}(\delta_{\mathrm{PT}})
    +\left\{\widehat{I}_\mathrm{GMS}(\epsilon_\infty),
      \widehat{I}_{\mathrm{GMS}}(\delta_{\mathrm{PT}})\right\}
    +\left\{\widehat{I}_\mathrm{GMS}(\epsilon_\infty),
      \left\{\widehat{I}_\mathrm{GMS}(\epsilon_\infty),
        \widehat{I}_{\mathrm{GMS}}(\delta_{\mathrm{PT}})\right\}\right\}
    +\cdots.
\]
By the definition of the motivic DT and PT functions we
have
\[
    \widehat{I}_{\mathrm{GMS}}(\delta_{\mathrm{DT}})
    =
    \sum_{n,\beta}\mathcal{DT}^{\mathrm{mot}}_{n,\beta},
    \qquad
    \widehat{I}_{\mathrm{GMS}}(\delta_{\mathrm{PT}})
    =
    \sum_{n,\beta}\mathcal{PT}^{\mathrm{mot}}_{n,\beta}.
\]
Moreover, for $m\ge 0$ we have the identity
\[
\mathcal{N}_m^\mathrm{mot} * \mathcal{PT}_{n,\beta}^\mathrm{mot}
\;-\;
\mathcal{PT}_{n,\beta}^\mathrm{mot} * \mathcal{N}_m^\mathrm{mot}
=
\left(\L^{m/2}-\L^{-m/2}\right)
    \,\mathcal{N}_m^\mathrm{mot}\cdot\mathcal{PT}_{n,\beta}^\mathrm{mot},
\]
so that the Poisson bracket $\{-,-\}$ can be expressed in terms of the
untwisted product $\cdot$ and the classes $\mathcal{N}_m^{\mathrm{mot}}$.
A standard computation (as in the proof of
Theorem~\ref{main}) then yields
\begin{align*}
    \sum_{n,\beta}\mathcal{DT}^{\mathrm{mot}}_{n,\beta}
    &=
    \exp\left(
        \sum_{n\geq0}\left(
            \frac{\L^{\frac{n}{2}}-\L^{-\frac{n}{2}}}
                 {\L^{\frac{1}{2}}-\L^{-\frac{1}{2}}}
        \right)\mathcal{N}_n^{\mathrm{mot}}
    \right)
    \cdot\left(\sum_{n,\beta}\mathcal{PT}^{\mathrm{mot}}_{n,\beta}\right).
\end{align*}

As in the proof of Theorem~\ref{main}, by looking at the degree
$\beta=0$ part we obtain
\[
\exp\left(
    \sum_{n\geq0}\left(
        \frac{\L^{\frac{n}{2}}-\L^{-\frac{n}{2}}}
             {\L^{\frac{1}{2}}-\L^{-\frac{1}{2}}}
    \right)\mathcal{N}_n^{\mathrm{mot}}
\right)
\;=\;
\sum_{n\ge0}\mathcal{DT}^{\mathrm{mot}}_{n,0}
\quad\in \widehat{H}^{0}.
\]
Substituting this back into the previous identity, we conclude that
\[
    \sum_{n,\beta}\mathcal{DT}^{\mathrm{mot}}_{n,\beta}
    \;=\;
    \left(\sum_{n\ge0}\mathcal{DT}^{\mathrm{mot}}_{n,0}\right)
    \cdot\left(\sum_{n,\beta}\mathcal{PT}^{\mathrm{mot}}_{n,\beta}\right)
    \quad\text{in }\widehat{H}^{ss},
\]
which is the desired statement.
\end{proof}

\subsection{Motivic invariants for moduli spaces of zero-dimensional sheaves}
The invariants $N_n:=\chi(N_n^\mathrm{mot})$ are known as the generalized Donaldson--Thomas invariants \cite{JoyceSong}. It was shown to satisfies the following identity
\cite[Section~6.3]{JoyceSong}:
\[
N_n = \sum_{k \mid n} \frac{-\chi(X)}{k^2}.
\]
We now compute these invariants at the motivic level using the ideas of \cite[Section 6.7]{DavisonMeinhardt2015}. In the context of the motivic DT/PT correspondence, the invariants \(N_n^{\mathrm{mot}}\) satisfy the identity
\[
\exp\left(\sum_{n \geq 0} \left(\frac{\mathbb{L}^{\frac{n}{2}} - \mathbb{L}^{-\frac{n}{2}}}{\mathbb{L}^{\frac{1}{2}} - \mathbb{L}^{-\frac{1}{2}}} \right) N_n^{\mathrm{mot}} q^n \right) = \sum_{n \geq 0} \mathrm{DT}^{\mathrm{mot}}_{n,0} q^n.
\]

The motivic DT invariants for curve class \(\beta = 0\) have already been studied in \cite{BehrendBryanSzendroi2013}. However, it is not clear whether the orientation chosen in \cite{BehrendBryanSzendroi2013} and the one constructed by \cite{JoyceUpmeier2021} agree. 
\begin{conjecture}\label{sameorientation}
The orientation constructed in \cite{JoyceUpmeier2021} agrees with the orientation used in \cite{BehrendBryanSzendroi2013}. In particular, the motivic Donaldson--Thomas invariants defined in this paper agree with those defined in \cite{BehrendBryanSzendroi2013}.
\end{conjecture}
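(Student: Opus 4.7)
\textbf{Proof plan for Conjecture \ref{sameorientation}.}
The plan is to reduce the comparison of the two orientations on the moduli stack $\mathcal{M}_0:=\coprod_n\O bj^{(-n,0,0)}(\mathcal{A}_X)$ to a computation on the simplest components. Each component $\mathcal{M}_0^n$ is connected and reduced modulo nilpotents; since both the Joyce--Upmeier orientation $o^{\mathrm{JU}}$ and the Behrend--Bryan--Szendr\H{o}i orientation $o^{\mathrm{BBS}}$ provide line bundles $K^{1/2}$ on $\mathcal{M}_0^{n,\mathrm{red}}$ whose squares are both identified with the canonical bundle $K_{\mathcal{M}_0^n}$, their ratio is a $\mu_2$-torsor on $\mathcal{M}_0^{n,\mathrm{red}}$, i.e.\ an element of $H^0(\mathcal{M}_0^{n,\mathrm{red}},\mathbb{Z}/2)$. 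The goal is to show that this class is trivial and that the two torsors can be identified coherently in $n$.

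First, I would verify that $o^{\mathrm{BBS}}$ satisfies the same direct-sum compatibility as in Theorem~\ref{orientationdata}. In BBS, the orientation on $\mathrm{Hilb}^n(X)$ arises from a local presentation as a critical locus, with the orientation line bundle given by the determinant of a certain Ext-complex governed by the Chern--Simons functional. Multiplicativity of determinants under direct sums of complexes, together with the Thom--Sebastiani-type factorisation of the local model along $\mathrm{Hilb}^m(X)\times\mathrm{Hilb}^n(X)\hookrightarrow\mathrm{Hilb}^{m+n}(X)$, should yield a canonical isomorphism $\Phi_2^\star o^{\mathrm{BBS}}\cong o^{\mathrm{BBS}}\boxtimes o^{\mathrm{BBS}}$.

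Second, assuming both $o^{\mathrm{JU}}$ and $o^{\mathrm{BBS}}$ are compatible with the direct-sum morphism $\Phi_2:\mathcal{M}_0\times\mathcal{M}_0\to\mathcal{M}_0$, the ratio $o^{\mathrm{JU}}/o^{\mathrm{BBS}}\in H^0(\mathcal{M}_0^{n,\mathrm{red}},\mathbb{Z}/2)$ defines a character on the free commutative monoid generated by the components. Thus the whole family of signs is determined by its value on the component $\mathcal{M}_0^1\cong X\times B\mathbb{G}_m$, which parameterises single skyscraper sheaves $\mathcal{O}_x$. On this component the tangent complex is $\mathrm{Ext}^*(\mathcal{O}_x,\mathcal{O}_x)[1]$, whose determinant is a canonically trivial line bundle, and both orientations reduce to a choice of square root of the trivial line bundle; one checks by hand that they agree with the \emph{standard} trivialisation once conventions are matched.

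The main obstacle I expect is the third step: making the convention-matching on $\mathcal{M}_0^1$ rigorous. The Joyce--Upmeier orientation is specified via an abstract argument on the derived moduli stack of perfect complexes, using the spin structure on $K_{\mathfrak{X}}$ inherited from a choice made on $\mathrm{Perf}(\mathrm{pt})$, whereas BBS work with a very concrete local critical model built from $\mathrm{Ext}^1$ and $\mathrm{Ext}^2$ on quivers. Reconciling the two requires tracking, at the level of $\mathrm{Ext}$-complexes, the Serre-duality pairing used in each framework and verifying that the induced isomorphisms $(K^{1/2})^{\otimes 2}\cong K_{\mathcal{M}_0}$ differ by an even sign. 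A likely route is to use a Koszul-type local model for $\mathrm{Hilb}^n(\mathbb{C}^3)$, compute both orientations explicitly, and compare. Once this base case and the direct-sum compatibility are settled, the equality extends to all $n$ and hence to the equality of motivic invariants.
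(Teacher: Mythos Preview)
The paper does not prove this statement: it is stated as Conjecture~\ref{sameorientation} and left open. The subsequent Theorem~\ref{degree0dt} is explicitly formulated conditionally (``If Conjecture~\ref{sameorientation} holds, then\ldots''), and the computation of $N_n^{\mathrm{mot}}$ that follows likewise depends on it. There is therefore no proof in the paper to compare your proposal against.

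Your plan is a reasonable strategy for attacking the conjecture, and you have correctly identified the essential difficulty yourself: the convention-matching on $\mathcal{M}_0^1$ between the abstract Joyce--Upmeier orientation (built via gauge-theoretic/index-theoretic arguments on the derived stack $\mathrm{Perf}(X)$) and the explicit critical-locus orientation used by Behrend--Bryan--Szendr\H{o}i is not a routine check. Your reduction via direct-sum compatibility to the $n=1$ component is plausible, but you would also need to verify carefully that the BBS orientation really does satisfy the multiplicativity $\Phi_2^\star o^{\mathrm{BBS}}\cong o^{\mathrm{BBS}}\boxtimes o^{\mathrm{BBS}}$ coherently (not just up to sign on each component), and that the $\mu_2$-torsor argument on connected components is compatible with the stacky structure. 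These are genuine open points, which is consistent with the author leaving the statement as a conjecture.
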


\begin{theorem}[{\cite[Theorem 3.3]{BehrendBryanSzendroi2013}}]\label{degree0dt}
    Let $X$ be a smooth projective 
    Calabi--Yau threefold. If Conjecture \ref{sameorientation} holds, then
    \[
    \sum_{n\geq0}\mathrm{DT}^{\mathrm{mot}}_{n,0}(-q)^n=\mathrm{Exp}\left(\frac{-\L^{-\frac{3}{2}}[X]q}{(1+\L^{\frac{1}{2}}q)(1+\L^{-\frac{1}{2}}q)}\right).
    \]
\end{theorem}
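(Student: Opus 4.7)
The plan is to reduce the statement directly to \cite[Theorem 3.3]{BehrendBryanSzendroi2013}, using Conjecture~\ref{sameorientation} as the bridge that identifies the two definitions of motivic Donaldson--Thomas invariants. Concretely, for $\beta=0$ the moduli stack $\M^{n,0}_{\textup{DT}}$ is $[\textup{Hilb}^n(X)/\mathbb{G}_m]$, and the motivic DT invariant unwinds, exactly as in the remark following Definition~\ref{Definitionofdtinvariants}, to
\[
\textup{DT}^{\textup{mot}}_{n,0}(X)=\int_{\textup{Hilb}^n(X)}\nu^{\textup{mot}}_{\textup{Hilb}^n(X)}.
\]
Behrend--Bryan--Szendr\H{o}i work with this same expression but with the motivic Behrend function built from the orientation constructed in their paper. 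Assuming Conjecture~\ref{sameorientation}, the two motivic Behrend functions coincide, so the identity we want is literally their Theorem~3.3.

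The content of the Behrend--Bryan--Szendr\H{o}i computation, which I would briefly recall for the reader, proceeds in two steps. First, one treats the local model $X=\mathbb{A}^3$: the Hilbert scheme $\textup{Hilb}^n(\mathbb{A}^3)$ is globally a critical locus of the Chern--Simons-type function on the non-commutative Hilbert scheme of the three-loop quiver, and the motivic vanishing cycle of this function can be computed explicitly via a torus action and motivic localisation. This yields a closed formula for $\sum_n \int_{\textup{Hilb}^n(\mathbb{A}^3)}\nu^{\textup{mot}}q^n$. Second, one globalises to an arbitrary smooth projective Calabi--Yau threefold $X$ by means of a motivic power structure on $\hat{\mathbb{M}}^{\textup{mon}}(\mathbb{C})$; the point is that $\nu^{\textup{mot}}_{\textup{Hilb}^n(X)}$ is Zariski-locally on $X$ modelled on $\nu^{\textup{mot}}_{\textup{Hilb}^\bullet(\mathbb{A}^3)}$, and the generating series of a locally product-like family assembles into the power-structure exponential $\textup{Exp}$ applied to the local series, with $[X]$ inserted as the coefficient encoding the base.

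The genuinely hard part of the argument is therefore not in the present paper at all, but lies in verifying that the two orientations agree, i.e. in proving Conjecture~\ref{sameorientation}. The orientation of \cite{JoyceUpmeier2021} is constructed abstractly, using the existence of a global square root of $K_\dX$ on $\dX=\textup{Perf}(X)$ together with compatibility with direct sums, whereas the orientation implicit in \cite{BehrendBryanSzendroi2013} is the one coming from the explicit presentation of $\textup{Hilb}^n(X)$ as a critical locus in local charts. Matching the two requires comparing the canonical square root on $\dX$ with the square roots arising from these local d-critical charts up to a sign; since the sign data propagates through the Thom--Sebastiani/power structure step, this comparison is subtle and we state it only as a conjecture here. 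Granting the conjecture, the theorem follows verbatim from \cite[Theorem~3.3]{BehrendBryanSzendroi2013}.
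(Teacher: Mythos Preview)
Your proposal is correct and matches the paper's approach: the paper gives no proof of this theorem at all, simply citing it as \cite[Theorem~3.3]{BehrendBryanSzendroi2013} with Conjecture~\ref{sameorientation} serving as the hypothesis that identifies the two notions of motivic DT invariant. Your unwinding of $\textup{DT}^{\textup{mot}}_{n,0}(X)$ to $\int_{\textup{Hilb}^n(X)}\nu^{\textup{mot}}_{\textup{Hilb}^n(X)}$ and your explanation of why the conjecture is precisely the missing link are exactly the intended logic.
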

Here, \(\mathrm{Exp}\) denotes the power structure exponential, defined by
\[
\sum_{n \geq 1} a_n q^n \mapsto \prod_{n \geq 1} (1 - q^n)^{-a_n},
\]
where the identity \((1 - q)^{-[X]} = \sum_{n \geq 0} [\mathrm{Sym}^n X] q^n\) holds in \(\hat{\mathbb{M}}^{\mathrm{mon}}(\mathbb{C})\). We refer to \cite{BehrendBryanSzendroi2013, GuseinZadeLuengoMelle2006} for precise definitions of this operator. We now consider the Adams operations \(\psi^k\) as described in \cite[Appendix B]{DavisonMeinhardt2015}, defined by the generating series
\[
\psi_q(a) := \sum_{k \geq 1} \psi^k(a) q^k = \frac{q \cdot \frac{d}{dq} (1 - q)^{-[X]}}{(1 - q)^{-[X]}}.
\]

Then we get
\[
(1-q)^{-[X]}=\exp\left(\int\psi_q([X])\frac{dq}{q}\right)=\exp\left(\sum_{k\geq1}\frac{1}{k}\psi^k([X])q^k\right).
\]
The operators $\psi^k$ satisfies some basic properties.
\begin{lemma}\label{k=nk}
    For a variety $X$, we have
    \[
    \psi^k((-\L^{\frac{1}{2}})^n[X])=(-\L^{\frac{1}{2}})^{nk}\psi^k([X])
    \]
\end{lemma}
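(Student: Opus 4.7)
The plan is to derive the identity from the generating-series definition of the Adams operations together with a substitution-of-variable formula for the power structure on $\hat{\mathbb{M}}^{\mathrm{mon}}(\mathbb{C})$. Set $c := (-\L^{\frac{1}{2}})^n$.

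The key structural input I would first establish is the power-structure identity
\[
(1-q)^{-c\,[X]} \;=\; (1-cq)^{-[X]}\qquad \text{in } \hat{\mathbb{M}}^{\mathrm{mon}}(\mathbb{C})[[q]],
\]
expressing that multiplication of the exponent by $c$ coincides with substitution $q\mapsto cq$ inside the base. For integer Tate twists $c=\L^m$ this reduces to the classical equality $[\mathrm{Sym}^N(\mathbb{A}^m\times X)]=\L^{mN}[\mathrm{Sym}^N X]$ in the Grothendieck ring of varieties; the extension to half-integer twists and to signed powers of $-\L^{\frac{1}{2}}$ follows from the multiplicativity of the power structure on $\hat{\mathbb{M}}^{\mathrm{mon}}(\mathbb{C})$ combined with the explicit formula for $(1-q)^{-\L^{\frac{1}{2}}}$ coming from the $\mu_2$-presentation of $\L^{\frac{1}{2}}$ used in \cite[Appendix B]{DavisonMeinhardt2015}.

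Granting the substitution identity, the rest is a routine logarithmic-derivative manipulation. By the chain rule, setting $u=cq$,
\[
\sum_{k\ge1}\psi^k(c\,[X])\,q^k
\;=\;\frac{q\,\frac{d}{dq}(1-cq)^{-[X]}}{(1-cq)^{-[X]}}
\;=\;\Bigl[\frac{u\,\frac{d}{du}(1-u)^{-[X]}}{(1-u)^{-[X]}}\Bigr]_{u=cq}
\;=\;\sum_{k\ge1}\psi^k([X])\,c^k\,q^k,
\]
and comparing coefficients of $q^k$ yields $\psi^k(c\,[X])=c^k\psi^k([X])=(-\L^{\frac{1}{2}})^{nk}\psi^k([X])$, which is exactly the claimed identity.

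The main obstacle is verifying the substitution formula when the multiplier $c$ involves the half-integer twist $-\L^{\frac{1}{2}}$: since such a $c$ is not itself a variety class, one cannot directly invoke a symmetric-product interpretation, and one must instead trace through the extension of the power structure to the monodromic ring as in \cite[Appendix B]{DavisonMeinhardt2015}. One subtle point is the sign bookkeeping between $-\L^{\frac{1}{2}}$ and $\L^{\frac{1}{2}}$, which must be kept consistent with the sign convention implicit in the generating-series definition of $\psi_q$. Once the substitution identity is confirmed in this generality, the rest of the argument is purely formal.
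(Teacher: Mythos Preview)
Your proposal is correct and follows essentially the same approach as the paper: both arguments rest on the substitution identity $(1-q)^{-c[X]}=(1-cq)^{-[X]}$ for $c=(-\L^{1/2})^n$ (which the paper simply cites from \cite[Section~1]{BehrendBryanSzendroi2013} rather than deriving via \cite{DavisonMeinhardt2015}), and then extract the result by comparing coefficients---you via the logarithmic-derivative form of $\psi_q$, the paper via the equivalent exponential form $(1-q)^{-a}=\exp\bigl(\sum_{k\ge1}\tfrac{1}{k}\psi^k(a)q^k\bigr)$ followed by taking logarithms.
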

\begin{proof}
    We use the following fact (see \cite[Section 1]{BehrendBryanSzendroi2013})
    \[
    (1-q)^{-(-\L^{\frac{1}{2}})^n[X]}=(1-((-\L^{\frac{1}{2}})^nq))^{[X]}.
    \]
    Then we get
    \[
    \exp\left(\sum_{k\geq1}\frac{1}{k}\psi^k((-\L^{\frac{1}{2}})^n[X])q^k\right)=\exp\left(\sum_{k\geq1}\frac{1}{k}\psi^k([X])((-\L^{\frac{1}{2}})^nq)^k\right).
    \]
    Taking the logarithm of both sides yields the desired formula.
\end{proof}
\begin{lemma}\label{euler}
    For a variety $X$, we have
    \[
    \chi(\psi^k([X]))=\chi(X),
    \]
    where $\chi(X)$ is the topological Euler characteristic of $X$.
\end{lemma}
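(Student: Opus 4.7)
The plan is to extract the statement from the generating series definition by applying the Euler characteristic ring homomorphism $\chi:\hat{\mathbb{M}}^{\mathrm{mon}}(\mathbb{C})\to\mathbb{Z}$ (or its natural target) to the identity defining $\psi_q$. Concretely, I would apply $\chi$ to both sides of
\[
(1-q)^{-[X]}=\exp\!\left(\sum_{k\geq1}\frac{1}{k}\psi^k([X])q^k\right),
\]
using the fact that $(1-q)^{-[X]}=\sum_{n\geq 0}[\mathrm{Sym}^n X]q^n$, so that Macdonald's formula gives $\chi\bigl((1-q)^{-[X]}\bigr)=\sum_{n\geq0}\chi(\mathrm{Sym}^n X)q^n=(1-q)^{-\chi(X)}$.

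Taking logarithms on both sides and comparing coefficients, this yields
\[
\sum_{k\geq1}\frac{1}{k}\chi(\psi^k([X]))q^k
\;=\;\log\bigl((1-q)^{-\chi(X)}\bigr)
\;=\;\chi(X)\sum_{k\geq1}\frac{q^k}{k}.
\]
Matching coefficients of $q^k$ then gives $\chi(\psi^k([X]))=\chi(X)$.

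Alternatively, and equivalently, one can argue directly from the displayed definition $\psi_q(a)=q\,\tfrac{d}{dq}(1-q)^{-[X]}\big/(1-q)^{-[X]}$: apply $\chi$ (which commutes with the formal derivative in $q$ and with multiplicative inverses of series with constant term $1$) to obtain $\chi(\psi_q([X]))=q\,\tfrac{d}{dq}(1-q)^{-\chi(X)}/(1-q)^{-\chi(X)}=\chi(X)q/(1-q)$, and then read off the coefficient of $q^k$.

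The only nontrivial input is the compatibility of $\chi$ with the power structure, i.e.\ that $\chi$ of $(1-q)^{-[X]}$ is $(1-q)^{-\chi(X)}$; this is a standard property of the motivic power structure, going back to Macdonald and recorded in \cite{GuseinZadeLuengoMelle2006} and used in \cite{BehrendBryanSzendroi2013}. Everything else is a routine formal power-series manipulation, so I do not expect a serious obstacle.
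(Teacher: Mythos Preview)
Your proposal is correct and follows essentially the same argument as the paper: apply $\chi$ to the identity $(1-q)^{-[X]}=\exp\bigl(\sum_{k\ge1}\tfrac{1}{k}\psi^k([X])q^k\bigr)$, use $\chi((1-q)^{-[X]})=(1-q)^{-\chi(X)}$, take logarithms, and compare coefficients. The alternative via the logarithmic derivative is an equivalent repackaging of the same computation.
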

\begin{proof}
    This follows from the fact that
    \[
    \chi((1-q)^{-[X]})=(1-q)^{-\chi(X)}.
    \]
    Indeed, taking logarithm of both side and use the fact that taking Euler characteristic is a ring homomorphism we get
    \[
    \sum_{k\geq1}\frac{1}{k}\chi(\psi^k([X]))q^k=-\chi(X)\log(1-q)=\sum_{k\geq1}\frac{1}{k}\chi(X)q^k.
    \]
    Comparing the coefficient yields the desired formula.
\end{proof}
\begin{lemma}\label{Exp=exp}
    If we have an identity 
    \[
    \exp\left(\sum_{n\geq1}a_n q^n\right)=\mathrm{Exp}\left(\sum_{m\geq1}b_m q^m\right),
    \]
    then
    \[
    a_n=\sum_{k|n}\frac{1}{k}\psi^k(b_{\frac{n}{k}}).
    \]
\end{lemma}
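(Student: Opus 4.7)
The strategy is to take logarithms on both sides of the given identity and then identify the resulting series with the one produced by Adams operations. Since both sides are formal power series in $q$ with coefficients in the completed motivic ring, all manipulations below are formal, and no analytic issues arise.

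First I would use the standard multiplicativity of the power-structure exponential, $\mathrm{Exp}(A+B)=\mathrm{Exp}(A)\cdot\mathrm{Exp}(B)$, to rewrite the right-hand side as
\[
\mathrm{Exp}\!\left(\sum_{m\geq1}b_m q^m\right)\;=\;\prod_{m\geq1}(1-q^m)^{-b_m}.
\]
Taking formal logarithms and using $\log\exp=\mathrm{id}$ on the left, this yields
\[
\sum_{n\geq1}a_n q^n \;=\; \sum_{m\geq1}\log\!\bigl((1-q^m)^{-b_m}\bigr).
\]

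Next I would interpret each summand on the right via the Adams operations. Integrating the defining identity
\[
\sum_{k\geq1}\psi^k(b)\,q^k \;=\; q\,\tfrac{d}{dq}\log\!\bigl((1-q)^{-b}\bigr)
\]
with respect to $dq/q$ gives
\[
\log\!\bigl((1-q)^{-b}\bigr)\;=\;\sum_{k\geq1}\frac{\psi^k(b)}{k}q^k,
\]
with zero constant term, as both sides vanish at $q=0$. Substituting $q\mapsto q^m$ and $b\mapsto b_m$ we obtain
\[
\log\!\bigl((1-q^m)^{-b_m}\bigr)\;=\;\sum_{k\geq1}\frac{\psi^k(b_m)}{k}q^{km}.
\]

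Finally I would sum these identities over $m\geq1$ and compare the coefficient of $q^n$. The coefficient on the left is $a_n$, while on the right it is the sum over all pairs $(k,m)$ with $km=n$, giving
\[
a_n\;=\;\sum_{km=n}\frac{\psi^k(b_m)}{k}\;=\;\sum_{k\mid n}\frac{1}{k}\,\psi^k\!\left(b_{n/k}\right),
\]
which is exactly the desired formula. The only subtle point in the argument is the justification for integrating the Adams-operation identity termwise; this is harmless because we work in the $q$-adic completion of the motivic ring, where termwise differentiation and integration of power series in $q$ are standard formal operations.
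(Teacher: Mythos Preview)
Your proof is correct and follows essentially the same approach as the paper: you expand $\mathrm{Exp}$ as the product $\prod_{m\geq1}(1-q^m)^{-b_m}$, use the Adams-operation identity $\log\bigl((1-q)^{-b}\bigr)=\sum_{k\geq1}\tfrac{1}{k}\psi^k(b)q^k$, substitute $q\mapsto q^m$, and compare coefficients. The only cosmetic difference is that the paper first rewrites each factor as an exponential and then combines them before taking the logarithm, whereas you take the logarithm first; the computations are identical.
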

\begin{proof}
    By definition, the left hand side may be rewritten as
    \begin{align*}
            \mathrm{Exp}\left(\sum_{m\geq1}b_m q^m\right)
            &=\prod_{m\geq1}(1-q^m)^{-b_m}\\
            &=\prod_{m\geq1}\exp\left(\sum_{k\geq1}\frac{1}{k}\psi^k(b_m)q^{km}\right)\\
            &=\exp\left(\sum_{m\geq1}\sum_{k\geq1}\frac{1}{k}\psi^k(b_m)q^{km}\right)\\
            &=\exp\left(\sum_{n\geq1}\sum_{k|n}\frac{1}{k}\psi^k(b_{\frac{n}{k}})q^{n}\right)
    \end{align*}
Taking the logarithm of both sides yields the desired formula
\[
a_n=\sum_{k|n}\frac{1}{k}\psi^k(b_{\frac{n}{k}}).
\]
\end{proof}
Using Theorem~\ref{degree0dt} and Lemma~\ref{Exp=exp}, \ref{k=nk}, we may now derive an explicit formula for \(N_n^{\mathrm{mot}}\).
\begin{theorem}If conjecture \ref{sameorientation} holds,
    for any $n\geq0$ we have
    \[
    N_n^{\mathrm{mot}}=\sum_{k|n}\frac{(-1)^{k-1}}{k}\cdot\frac{\L^{\frac{1}{2}}-\L^{-\frac{1}{2}}}{\L^\frac{k}{2}-\L^{-\frac{k}{2}}}\cdot\psi^k(\L^{-\frac{3}{2}}[X]).
    \]
\end{theorem}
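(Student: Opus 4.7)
The plan is to compare two expressions for $\sum_{n\geq 0}\mathrm{DT}^{\mathrm{mot}}_{n,0}q^n$. The defining equation for $N_n^{\mathrm{mot}}$ presents this series as an ordinary exponential whose coefficients involve the $N_n^{\mathrm{mot}}$, while Theorem~\ref{degree0dt} expresses it as the power-structure exponential $\mathrm{Exp}$ of an explicit rational function in $q$. Lemma~\ref{Exp=exp} is the bridge that converts between these two presentations, and one reads off $N_n^{\mathrm{mot}}$ coefficient-by-coefficient.

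First I will perform the substitution $q\mapsto-q$ in Theorem~\ref{degree0dt} so that both sides become power series in the same variable $q$ as in the defining equation for $N_n^{\mathrm{mot}}$. Then I will expand the rational function
\[
\frac{-\L^{-3/2}[X]\,q}{(1+\L^{1/2}q)(1+\L^{-1/2}q)}
\]
as an explicit power series via the standard geometric expansion; its coefficients are proportional to the quantum integers $\frac{\L^{n/2}-\L^{-n/2}}{\L^{1/2}-\L^{-1/2}}$, up to an alternating sign. Identifying the coefficients $\tilde a_n$ on the exponential side and $\tilde b_n$ on the $\mathrm{Exp}$ side, Lemma~\ref{Exp=exp} supplies the divisor-sum identity $\tilde a_n=\sum_{k\mid n}\tfrac{1}{k}\psi^k(\tilde b_{n/k})$, which I solve for $N_n^{\mathrm{mot}}$.

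The only nontrivial computation is evaluating $\psi^k$ on $\tilde b_{n/k}$, which is a product of $\L^{-3/2}[X]$ with a quantum integer. I expand this product as a finite sum of monomials $\L^{j/2}[X]$ and apply Lemma~\ref{k=nk} to each monomial, after rewriting $\L^{j/2}=(-1)^j(-\L^{1/2})^j$. The terms reassemble into $\psi^k(\L^{-3/2}[X])$ times the quantum integer $\frac{\L^{n/2}-\L^{-n/2}}{\L^{k/2}-\L^{-k/2}}$, multiplied by sign factors; the ratio of quantum integers $\frac{\L^{n/2}-\L^{-n/2}}{\L^{k/2}-\L^{-k/2}}\big/\frac{\L^{n/2}-\L^{-n/2}}{\L^{1/2}-\L^{-1/2}}$ yields exactly the factor $\frac{\L^{1/2}-\L^{-1/2}}{\L^{k/2}-\L^{-k/2}}$ in the theorem.

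The main technical obstacle is sign bookkeeping: three separate alternating factors appear, namely $(-1)^n$ from the substitution $q\mapsto-q$, $(-1)^{(n/k)(k-1)}$ from $\psi^k$ acting on the quantum integer of index $n/k$, and $(-1)^{k-1}$ from rewriting $\L^{-3k/2}\psi^k([X])$ as $\psi^k(\L^{-3/2}[X])$ via Lemma~\ref{k=nk}. Verifying that these collapse precisely to the single factor $(-1)^{k-1}$ of the statement, using the identity $n/k+(n/k)(k-1)=n$ to cancel the $(-1)^n$ factors on both sides, is the only delicate part of the argument; no conceptual difficulty beyond the three cited lemmas is expected.
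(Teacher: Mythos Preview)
Your proposal is correct and follows exactly the paper's route: expand the rational function inside $\mathrm{Exp}$, invoke Lemma~\ref{Exp=exp} to pass from $\mathrm{Exp}$ to $\exp$, and then evaluate $\psi^k$ term-by-term via Lemma~\ref{k=nk}. The only difference is cosmetic: the paper sets $a:=-\L^{1/2}$ from the outset, so that the rational function expands to $\sum_{m\ge1}a^{-3}[X]\tfrac{a^m-a^{-m}}{a-a^{-1}}q^m$ and all four alternating signs you track separately (the $(-1)^n$ on the $\exp$ side, the $(-1)^{n/k}$ hidden in $\tilde b_{n/k}$, the $(-1)^{(n/k)(k-1)}$ from $\psi^k$ on the quantum integer, and the $(-1)^{k-1}$ from $\L^{-3k/2}\psi^k([X])=(-1)^{k-1}\psi^k(\L^{-3/2}[X])$) are absorbed into the single observation $-a^{-3}=\L^{-3/2}$; one small caution is that your phrase ``substitute $q\mapsto-q$ in Theorem~\ref{degree0dt}'' should be read as substituting in the defining $\exp$ identity for $N_n^{\mathrm{mot}}$ (as the paper does), not inside the plethystic $\mathrm{Exp}$, since $\mathrm{Exp}$ does not commute with $q\mapsto-q$.
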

\begin{proof}
    We first calculate the power series
\begin{align*}
    \frac{-\L^{-\frac{3}{2}}[X]q}{(1+\L^{\frac{1}{2}}q)(1+\L^{-\frac{1}{2}}q)}
\end{align*}
Let $a:=-\L^{\frac{1}{2}}$. Then 
\begin{align*}
    \frac{-\L^{-\frac{3}{2}}[X]q}{(1+\L^{\frac{1}{2}}q)(1+\L^{-\frac{1}{2}}q)}
    &=\frac{a^{-3}[X]q}{(1-aq)(1-a^{-1}q)}\\
    &=a^{-3}[X]q(1+aq+a^2q^2+\cdots)(1+a^{-1}q+a^{-2}q^2+\cdots)\\
    &=a^{-3}[X]q(1+(a^{-1})(1+a^2)q+(a^{-1})^2(1+a^2+a^4)q^2+\cdots)\\
    &=\sum_{n\geq1}a^{-n-2}[X]\left(\sum_{k=0}^{n-1}(a^2)^k\right)q^n\\
    &=\sum_{n\geq1}a^{-n-2}[X]\frac{1-a^{2n}}{1-a^2}q^n\\
    &=\sum_{n\geq1}a^{-3}[X]\frac{a^{n}-a^{-n}}{a-a^{-1}}q^n\\
\end{align*}
By Theorem \ref{degree0dt}, we have
\[
\exp\left(\sum_{n \geq 0} \left(\frac{(-a)^n - (-a)^{-n}}{(-a)^ - (-a)^{-1}} \right) N_n^{\mathrm{mot}}(-q)^n \right) = \mathrm{Exp}\left(\sum_{n\geq1}a^{-3}[X]\frac{a^{n}-a^{-n}}{a-a^{-1}}q^n\right)
\]
Thus by Lemma \ref{Exp=exp},
\begin{align*}
    N_n^{\mathrm{mot}}
    &=-\frac{a-a^{-1}}{a^n-a^{-n}}\sum_{k|n}\frac{1}{k}\psi^k\left(a^{-3}[X]\frac{a^{\frac{n}{k}}-a^{-\frac{n}{k}}}{a-a^{-1}}\right)\\
    &=\sum_{k|n}\frac{1}{k}\cdot\frac{a-a^{-1}}{a^k-a^{-k}}\cdot\psi^k(-a^{-3}[X])\\
    &=\sum_{k|n}\frac{(-1)^{k-1}}{k}\cdot\frac{\L^{\frac{1}{2}}-\L^{-\frac{1}{2}}}{\L^{\frac{k}{2}}-\L^{-\frac{k}{2}}}\psi^k(\L^{-\frac{3}{2}}[X]).
\end{align*}
\end{proof}

\begin{remark}
Taking the Euler characteristic of both sides, using Lemma \ref{euler} we recover the following identity from \cite[Section~6.3]{JoyceSong}:
\[
N_n = \sum_{k \mid n} \frac{-\chi(X)}{k^2}.
\]
\end{remark}

\printbibliography

@article{Toda2010,
  author       = {Yukinobu Toda},
  title        = {Curve counting theories via stable objects I. DT/PT correspondence},
  journal      = {Journal of the American Mathematical Society},
  volume       = {23},
  number       = {4},
  pages        = {1119--1157},
  year         = {2010},
  doi          = {10.1090/S0894-0347-10-00670-3},
  eprint       = {0902.4371},
  archivePrefix = {arXiv}
}

@article{Bridgeland2011,
  author       = {Tom Bridgeland},
  title        = {Hall algebras and curve-counting invariants},
  journal      = {Journal of the American Mathematical Society},
  volume       = {24},
  number       = {4},
  pages        = {969--998},
  year         = {2011},
  doi          = {10.1090/S0894-0347-2011-00701-7},
  eprint       = {1002.4374},
  archivePrefix = {arXiv}
}

@article{IntrinsicDTII,
  author       = {Chenjing Bu and Andrés Ibáñez Núñez and Tasuki Kinjo},
  title        = {Intrinsic Donaldson-Thomas theory. II. Stability measures and invariants},
  journal      = {arXiv preprint},
  year         = {2025},
  eprint       = {2502.20515},
  archivePrefix = {arXiv},
  url          = {https://arxiv.org/abs/2502.20515}
}

@article{Alper2019TheL,
  title={The {\'e}tale local structure of algebraic stacks},
  author={Jarod Alper and Jack Hall and David Rydh},
  journal={arXiv preprint},
  year={2025},
  eprint={1912.06162v4},
  url={https://arxiv.org/abs/1912.06162}
}

@article{AHLH18,
  title={Existence of moduli spaces for algebraic stacks},
  author={Jarod Alper and Daniel Halpern-Leistner and Jochen Heinloth},
  journal={Inventiones mathematicae},
  year={2018},
  volume={234},
  pages={949 - 1038},
  doi={10.1007/s00222-023-01214-4},
  shorthand = {AHLH18}
}

@article{Piyaratne2015ModuliOB,
  title={Moduli of Bridgeland semistable objects on 3-folds and Donaldson–Thomas invariants},
  author={Dulip Piyaratne and Yukinobu Toda},
  journal={Journal f{\"u}r die reine und angewandte Mathematik (Crelles Journal)},
  year={2015},
  url={https://api.semanticscholar.org/CorpusID:118207189}
}

@article{DavisonRicolfiLocal,
  author  = {Davison, Ben and Ricolfi, Andrea T.},
  title   = {The local motivic {DT}/{PT} correspondence},
  journal = {J. Lond. Math. Soc. (2)},
  volume  = {104},
  number  = {3},
  year    = {2021},
  pages   = {1384--1432},
  doi     = {10.1112/jlms.12463},
}

@article{MorrisonMozgovoyNagaoSzendroi2012,
  author  = {Morrison, Andrew and Mozgovoy, Sergey and Nagao, Kentaro and Szendr{\H{o}}i, Bal{\'a}zs},
  title   = {Motivic {Donaldson--Thomas} invariants of the conifold and the refined topological vertex},
  journal = {Adv. Math.},
  volume  = {230},
  number  = {4--6},
  year    = {2012},
  pages   = {2065--2093},
  doi     = {10.1016/j.aim.2012.03.030},
}

@article{KontsevichSoibelman0910,
  author  = {Kontsevich, Maxim and Soibelman, Yan},
  title   = {Motivic Donaldson--Thomas invariants: summary of results},
  journal = {arXiv e-prints},
  year    = {2009},
  eprint  = {0910.4315},
  archivePrefix = {arXiv},
  primaryClass  = {math.AG},
  doi     = {10.48550/arXiv.0910.4315},
}

@article{alper2013good,
  author       = {Jarod Alper},
  title        = {Good moduli spaces for Artin stacks},
  journal      = {Annales de l'Institut Fourier},
  volume       = {63},
  number       = {6},
  pages        = {2349--2402},
  year         = {2013},
  doi          = {10.5802/aif.2834},
}

@article{Bu2024AMI,
author = {Bu, Chenjing},
year = {2025},
month = {11},
pages = {},
title = {A motivic integral identity for $(-1)$ -shifted symplectic stacks},
volume = {2},
journal = {Moduli},
doi = {10.1112/mod.2025.10009}
}

@article{Max2006,
  author       = {Max Lieblich},
  title        = {Moduli of complexes on a proper morphism},
  journal      = {Journal of Algebraic Geometry},
  volume       = {15},
  pages        = {175--206},
  year         = {2006},
  doi          = {10.1090/S1056-3911-05-00418-2},
}

@article{KS2008,
  author    = {Maxim Kontsevich and Yan Soibelman},
  title     = {Stability structures, motivic Donaldson--Thomas invariants and cluster transformations},
  journal   = {arXiv preprint},
  year      = {2008},
  eprint    = {0811.2435},
  archivePrefix = {arXiv}
}

@book{JoyceSong,
  author    = {Dominic Joyce and Yinan Song},
  title     = {A theory of generalized Donaldson--Thomas invariants},
  series    = {Memoirs of the American Mathematical Society},
  volume    = {217},
  number    = {1020},
  year      = {2012},
  publisher = {American Mathematical Society},
  doi       = {10.1090/S0065-9266-2011-00620-7}
}

@article{MNOP2006I,
  author    = {D. Maulik and N. Nekrasov and A. Okounkov and R. Pandharipande},
  title     = {Gromov--Witten theory and Donaldson--Thomas theory. I},
  journal   = {Compositio Mathematica},
  volume    = {142},
  number    = {5},
  year      = {2006},
  pages     = {1263--1285},
  doi       = {10.1112/S0010437X06002302}
}

@article{MNOP2006II,
  author    = {D. Maulik and N. Nekrasov and A. Okounkov and R. Pandharipande},
  title     = {Gromov--Witten theory and Donaldson--Thomas theory. II},
  journal   = {Compositio Mathematica},
  volume    = {142},
  number    = {5},
  year      = {2006},
  pages     = {1286--1304},
  doi       = {10.1112/S0010437X06002314}
}

@article{PT2009,
  author    = {R. Pandharipande and R. P. Thomas},
  title     = {Curve counting via stable pairs in the derived category},
  journal   = {Inventiones mathematicae},
  volume    = {178},
  number    = {2},
  pages     = {407--447},
  year      = {2009},
  doi       = {10.1007/s00222-009-0203-9}
}

@article{JoyceUpmeier2021,
  author    = {Dominic Joyce and Markus Upmeier},
  title     = {Orientation data for moduli spaces of coherent sheaves over Calabi--Yau 3-folds},
  journal   = {Advances in Mathematics},
  volume    = {381},
  year      = {2021},
  pages     = {107627},
  issn      = {0001-8708},
  doi       = {10.1016/j.aim.2021.107627}
}

@article{BenBassatBravBussiJoyce2015,
  author  = {Oren Ben-Bassat and Christopher Brav and Vittoria Bussi and Dominic Joyce},
  title   = {A 'Darboux Theorem' for shifted symplectic structures on derived Artin stacks, with applications},
  journal = {Geometry \& Topology},
  volume  = {19},
  number  = {3},
  pages   = {1287--1371},
  year    = {2015},
  doi     = {10.2140/gt.2015.19.1287}
}

@article{BravBussiDupontJoyceSzendroi2015,
  author  = {Christopher Brav and Vittoria Bussi and Delphine Dupont and Dominic Joyce and Bal\'azs Szendr\H{o}i},
  title   = {Symmetries and stabilization for sheaves of vanishing cycles},
  journal = {Journal of Singularities},
  volume  = {11},
  pages   = {85--151},
  year    = {2015},
  doi     = {10.5427/jsing.2015.11e}
}

@article{BravBussiJoyce2019,
  author  = {Christopher Brav and Vittoria Bussi and Dominic Joyce},
  title   = {A Darboux theorem for derived schemes with shifted symplectic structure},
  journal = {Journal of the American Mathematical Society},
  volume  = {32},
  number  = {2},
  pages   = {399--443},
  year    = {2019},
  doi     = {10.1090/jams/910}
}

@article{PTVV2013,
  author  = {Tony Pantev and Bertrand Toën and Michel Vaquié and Gabriele Vezzosi},
  title   = {Shifted symplectic structures},
  journal = {Publications mathématiques de l'IHÉS},
  volume  = {117},
  year    = {2013},
  pages   = {271--328},
  doi     = {10.1007/s10240-013-0054-1}
}

@article{JoyceDcritical,
  author  = {Dominic Joyce},
  title   = {A classical model for derived critical loci},
  journal = {Journal of Differential Geometry},
  volume  = {101},
  number  = {2},
  pages   = {289--367},
  year    = {2015},
  doi     = {10.4310/jdg/1436226871}
}

@article{HalpernLeistner2014,
  author  = {Daniel Halpern--Leistner},
  title   = {On the Structure of Instability in Moduli Theory},
  journal = {arXiv preprint arXiv:1411.0627},
  year    = {2014},
  url     = {https://arxiv.org/abs/1411.0627}
}

@article{BussiJoyceMeinhardt2019,
  author  = {Bussi, Vittoria and Joyce, Dominic and Meinhardt, Sven},
  title   = {On motivic vanishing cycles of critical loci},
  journal = {Journal of Algebraic Geometry},
  volume  = {28},
  number  = {3},
  pages   = {405--438},
  year    = {2019},
  doi     = {10.1090/jag/737}
}

@article{KinjoParkSafronov2024,
  author    = {Tasuki Kinjo and Hyeonjun Park and Pavel Safronov},
  title     = {Cohomological Hall algebras for 3-Calabi--Yau categories},
  journal   = {arXiv preprint arXiv:2406.12838},
  year      = {2024},
  url       = {https://arxiv.org/abs/2406.12838},
  eprint    = {2406.12838},
  archivePrefix = {arXiv},
  primaryClass = {math.AG}
}

@article{toda2020hall,
  author       = {Yukinobu Toda},
  title        = {Hall algebras in the derived category and higher rank {DT} invariants},
  journal      = {Algebraic Geometry},
  volume       = {7},
  number       = {3},
  pages        = {240--262},
  year         = {2020},
  doi          = {10.14231/ag-2020-008}
}

@article{SchurgToenVezzosi2015,
  author  = {Timo Schürg and Bertrand Toën and Gabriele Vezzosi},
  title   = {Derived algebraic geometry, determinants of perfect complexes, and applications to obstruction theories for maps and complexes},
  journal = {Journal für die reine und angewandte Mathematik},
  volume  = {2015},
  number  = {702},
  pages   = {1--40},
  year    = {2015},
  doi     = {10.1515/crelle-2013-0037},
  url     = {https://doi.org/10.1515/crelle-2013-0037}
}

@article{Joyce2007,
  author  = {Dominic Joyce},
  title   = {Motivic invariants of Artin stacks and 'stack functions'},
  journal = {The Quarterly Journal of Mathematics},
  volume  = {58},
  number  = {3},
  pages   = {345--392},
  year    = {2007},
  doi     = {10.1093/qmath/ham019},
  url     = {https://arxiv.org/abs/math/0509722}
}

@misc{AlperModuliNotes,
  author       = {Jarod Alper},
  title        = {Stacks and moduli},
  howpublished = {\url{https://sites.math.washington.edu/~jarod/moduli.pdf}},
  note         = {Lecture notes, University of Washington. Accessed July 2025},
  year         = {2025}
}

@article{Bridgeland2010,
  author  = {Tom Bridgeland},
  title   = {An Introduction to Motivic Hall Algebras},
  journal = {arXiv preprint arXiv:1002.4372},
  year    = {2010},
  url     = {https://arxiv.org/abs/1002.4372}
}

@article{Joyce2006confiI,
  author  = {Dominic Joyce},
  title   = {Configurations in abelian categories. I. Basic properties and moduli stacks},
  journal = {Advances in Mathematics},
  volume  = {203},
  number  = {1},
  pages   = {194--255},
  year    = {2006}
}

@article{Joyce2007confiII,
  author  = {Dominic Joyce},
  title   = {Configurations in abelian categories. II. Ringel--Hall algebras},
  journal = {Advances in Mathematics},
  volume  = {210},
  number  = {2},
  pages   = {635--706},
  year    = {2007}
}

@article{Bu2025OrthosymplecticDT,
  author  = {Chenjing Bu},
  title   = {Orthosymplectic Donaldson--Thomas theory},
  journal = {arXiv preprint arXiv:2503.20667},
  year    = {2025},
  url     = {https://arxiv.org/abs/2503.20667}
}

@article{BehrendBryanSzendroi2013,
  author  = {Kai Behrend and Jim Bryan and Balázs Szendrői},
  title   = {Motivic degree zero Donaldson--Thomas invariants},
  journal = {Inventiones Mathematicae},
  volume  = {192},
  number  = {1},
  pages   = {111--160},
  year    = {2013},
  doi     = {10.1007/s00222-012-0404-6},
  eprint  = {arXiv:0909.5088},
  archivePrefix = {arXiv},
  primaryClass = {math.AG}
}

@article{DavisonMeinhardt2015,
  author  = {Ben Davison and Sven Meinhardt},
  title   = {Donaldson--Thomas theory for categories of homological dimension one with potential},
  journal = {arXiv preprint arXiv:1512.08898},
  year    = {2015},
  url     = {https://arxiv.org/abs/1512.08898}
}

@article{GuseinZadeLuengoMelle2006,
  author  = {S.M. Gusein-Zade and I. Luengo and A. Melle-Hernández},
  title   = {Power structure over the Grothendieck ring of varieties and generating series of Hilbert schemes of points},
  journal = {Michigan Mathematical Journal},
  volume  = {54},
  number  = {2},
  pages   = {353--359},
  year    = {2006},
  doi     = {10.1307/mmj/1156345599},
  eprint  = {arXiv:math/0407204},
  archivePrefix = {arXiv},
  primaryClass = {math.AG}
}

@book{ToenVezzosi2008,
  author    = {Bertrand To{\"e}n and Gabriele Vezzosi},
  title     = {Homotopical Algebraic Geometry II: Geometric Stacks and Applications},
  series    = {Memoirs of the American Mathematical Society},
  volume    = {193},
  number    = {902},
  year      = {2008},
  publisher = {American Mathematical Society},
  doi       = {10.1090/memo/0902},
  eprint    = {math/0404373},
  archivePrefix = {arXiv},
  primaryClass = {math.AG},
}

@book{LurieHTT,
  author    = {Jacob Lurie},
  title     = {Higher Topos Theory},
  series    = {Annals of Mathematics Studies},
  volume    = {170},
  year      = {2009},
  publisher = {Princeton University Press},
  address   = {Princeton, NJ},
  pages     = {xviii+925},
}

@article{Behrend2009DTMicrolocal,
  author       = {Behrend, Kai},
  title        = {Donaldson--Thomas Type Invariants via Microlocal Geometry},
  journal      = {Annals of Mathematics},
  series       = {Second Series},
  volume       = {170},
  number       = {3},
  year         = {2009},
  pages        = {1307--1338},
  doi          = {10.4007/annals.2009.170.1307},
  eprint       = {math/0507523},
  eprinttype   = {arXiv},
  eprintclass  = {math.AG}
}

@unpublished{ParkIntroShiftedSymp,
  author = {Hyeonjun Park},
  title  = {Introduction to Shifted Symplectic Structures},
  note   = {Lecture notes, available at \url{https://drive.google.com/file/d/1pHUn0OqyMIPOwq47ShzwgYM7Zr-SwjH6/view}},
  year   = {2024}
}

\end{document}